\documentclass[11pt,reqno]{amsart}
\usepackage[]{amsmath,amssymb,amsfonts,latexsym,amsthm,mathabx}
\usepackage{amssymb,bbm,mathrsfs,tikz,stmaryrd}
\usetikzlibrary{patterns,arrows.meta}
\usepackage[normalem]{ulem}
\usepackage{graphicx}
\usepackage[font=small]{caption}
\usepackage[noadjust]{cite}

\numberwithin{equation}{section}
\usepackage[colorlinks=true, pdfstartview=FitV, linkcolor=blue,
  citecolor=blue, urlcolor=blue,pagebackref=false]{hyperref}
\usepackage[shortlabels]{enumitem}  
\usepackage[capitalise]{cleveref}
\usepackage[colorinlistoftodos]{todonotes}
\presetkeys{todonotes}{inline, color=green}{}
\usepackage{comment}

\newtheorem{theorem}{Theorem}[section]
\newtheorem*{theorem*}{Theorem}
\newtheorem{lemma}[theorem]{Lemma}

\newtheorem{proposition}[theorem]{Proposition}

\newtheorem{remark}[theorem]{Remark}
\newtheorem*{remark*}{Remark}

\newtheorem{definition}[theorem]{Definition}
\newtheorem*{definition*}{Definition}

\newtheorem*{question*}{Question}
\newtheorem*{example*}{Example}
\newtheorem*{examples*}{Examples}
\newcommand{\abbr}[1]{{\sc{\lowercase{#1}}}}

\newcommand{\E}{\mathbb E}
\newcommand{\N}{\mathbb N}
\renewcommand{\P}{\mathbb P}
\newcommand{\Q}{\mathbb Q}

\newcommand{\R}{\mathbb R}
\newcommand{\Z}{\mathbb Z}

\newcommand{\cA}{{\mathcal A}}
\newcommand{\cB}{{\mathcal B}}
\newcommand{\cC}{{\mathcal C}}

\newcommand{\cE}{{\mathcal E}}

\newcommand{\cW}{{\mathcal W}}

\newcommand{\bP}{\mathbf P}
\newcommand{\bQ}{\mathbf Q}

\renewcommand{\d}{\mathrm{d}}

\newcommand{\fa}{\mathfrak a}

\newcommand{\fh}{\mathfrak h}

\newcommand{\fI}{\mathfrak I}
\newcommand{\fX}{\mathfrak X}

\newcommand{\BB}{\mathsf{BB}}

\usepackage{crossreftools}

\newcommand{\bookmarkcref}[1]{\crtcrefcounter{#1}~\crtrefnumber{#1}}

\ExplSyntaxOn
\newcommand{\bookmarkCref}[1]{\text_titlecase_first:n{\crtcrefcounter{#1}}~\crtrefnumber{#1}}
\ExplSyntaxOff

\AddToHook{env/lemma/begin}{\crefalias{theorem}{lemma}}
\AddToHook{env/claim/begin}{\crefalias{theorem}{claim}}
\AddToHook{env/proposition/begin}{\crefalias{theorem}{proposition}}
\AddToHook{env/corollary/begin}{\crefalias{theorem}{corollary}}
\AddToHook{env/definition/begin}{\crefalias{theorem}{definition}}
\AddToHook{env/example/begin}{\crefalias{theorem}{example}}
\AddToHook{env/question/begin}{\crefalias{theorem}{question}}
\AddToHook{env/remark/begin}{\crefalias{theorem}{remark}}
\AddToHook{env/observation/begin}{\crefalias{theorem}{observation}}
\AddToHook{env/fact/begin}{\crefalias{theorem}{fact}}
\crefname{step}{Step}{Steps}
\crefname{part}{Part}{Parts}
\crefname{case}{Case}{Cases}
\crefname{claim}{Claim}{Claims}

\newcommand{\llb}{\llbracket}
\newcommand{\rrb}{\rrbracket}
\newcommand{\Dim}{{\sc d} }

\newcommand{\Var}{\operatorname{Var}}

\newcommand{\Bin}{\operatorname{Bin}}

\newcommand{\tv}{\text{\sc tv}}
\newcommand{\sgn}{\operatorname{sgn}}

\newcommand{\one}{\mathbbm{1}}
\renewcommand{\epsilon}{\varepsilon}

\crefformat{equation}{(#2#1#3)}
\crefrangeformat{equation}{(#3#1#4) to~(#5#2#6)}
\crefmultiformat{equation}{(#2#1#3)}{ and~(#2#1#3)}{, (#2#1#3)}{ and~(#2#1#3)}
\crefrangemultiformat{equation}{(#3#1#4) to~(#5#2#6)}{ and~(#3#1#4) to~(#5#2#6)}{, (#3#1#4) to~(#5#2#6)}{ and~(#3#1#4) to~(#5#2#6)}

\author{Amir Dembo}
\address{Amir Dembo\hfill\break
Mathematics  Department and Statistics Department\\ Stanford University\\ 
Stanford, CA 94305, USA.}
\email{adembo@stanford.edu}

\author{Eyal Lubetzky}
\address{Eyal Lubetzky\hfill\break
Courant Institute %of Mathematical Sciences
\\ New York University\\
251 Mercer Street\\ New York, NY 10012, USA.}
\email{eyal@courant.nyu.edu}

\author{Ofer Zeitouni}
\address{Ofer Zeitouni\hfill\break
Department of Mathematics\\
Weizmann Institute of Science\\
Rehovot 76100, Israel\\
and
Courant Institute\\
New York University\\
251 Mercer Street\\ New York, NY 10012, USA.}
\email{ofer.zeitouni@weizmann.ac.il}

\keywords{Line ensembles. Brownian polymers. \abbr{sos} model.}

\title{The law of $(1+1)$D SOS with an area tilt in a wedge}

\begin{document}

\begin{abstract}
Motivated by the study of the level lines of the $(2+1)$\Dim Solid-On-Solid (\abbr{sos}) model above a floor,  near the corners of the box,
we derive  the limit law of an ensemble of $K$ curves 
from a $(1+1)$\Dim \abbr{sos} model, with an area tilt, and above a wedge-shaped floor in $\{-N,\ldots,N\}$. 
We show that there exist explicit critical points $\alpha_0=1>\alpha_1>\ldots>\alpha_K>0$ such that, for each $r \geq 1$,  along the intervals $\pm(\alpha_{r} N,\alpha_{r-1} N)$,  the bottom $K+1-r$ curves,  rescaled by $(N^{2/3},N^{1/3})$, tend to the law of a Geometrically-Area-Tilted Ensemble of non-crossing Brownian paths (Brownian \abbr{gate}),
independently across those $2K$ intervals.  All other 
curves,  centered and rescaled by $(N,\sqrt{N})$,  tend to a product of $K$ suitable Brownian bridges. \end{abstract}

% {\mbox{}\vspace{-0.3in}
 \maketitle
% }
% \vspace{-0.35in}

\section{Introduction}\label{sec:intro}

The $(d+1)$\Dim Solid-On-Solid (\abbr{SOS}) model is a probability distribution over $\Z$-valued height functions $\varphi$ on $\Lambda \subset \Z^d$, defined as follows. Write $x\sim y$ for a pair of adjacent sites in $\Lambda$, and set $\varphi(x) = 0$ for all $x\notin\Lambda$ (zero boundary conditions); the probability assigned to $\varphi$ is then
\[ \frac1{Z_{\Lambda}} \exp\Big(-\beta\sum_{x\sim y} |\varphi(x)-\varphi(y)|\Big)\,,
\]
for a parameter $\beta> 0$ (the inverse-temperature) and a normalizer $Z_\Lambda = Z_{\Lambda}(\beta)$ (the partition function). Of main interest is the setting where the $\Z$-valued functions $\varphi$ are constrained to be nonnegative, referred to as a floor (or hard wall) at height $0$.

As explained in \cref{sec:lit},  the analysis of the level lines of the $(2+1)$\Dim \abbr{SOS} model 
in a box,  motivates the study of the following 
$(1+1)$\Dim \abbr{SOS}-type line ensemble,  with floor constraints in the form of a wedge and an area tilt.

Fixing $K \geq 1$,  an inverse-temperature $\beta>0$ and area tilt parameters $\fh>0$,  $\lambda>1$,  
we introduce the curves 
$\varphi_1,\ldots,\varphi_K$ such that
\begin{equation}\label{eq:1d-sos-domain-many} \varphi_k(\pm N) = 0~,~ \varphi_k(t)\geq \varphi_{k+1}(t) ~\mbox{ for all $k,t$,} \qquad \varphi_K(t) \geq |t|-N~\mbox{ for all $|t|<N$}\,,\end{equation}
assigning them the probability
\begin{equation}\label{eq:1d-sos-many-curves}\mu_{N,K}(\underline \varphi) = 
\frac1{Z_{N,K}}\exp\bigg(-
\sum_{k=1}^K \Big[ \beta \sum_{t=-N}^{N-1} \left|\varphi_k(t+1)-\varphi_k(t)\right| + 
\frac{\fh} N %\sum_{k=1}^K 
\lambda^{k-1} 
\sum_{t=-N}^N 
\varphi_k(t) \Big]
\bigg)\,,
\end{equation}
which is invariant under time reversal. 
Under this model,
each of the (ordered) $\varphi_k$ is given a geometrically-increasing area tilt $\lambda^{k-1}\fh/N$. We identify $\varphi_k$ with the piecewise linear interpolation of the height functions $\varphi_k$.

It will be convenient to set $\phi(t) = \varphi(t)-(|t|-N)$, replacing \cref{eq:1d-sos-domain-many,eq:1d-sos-many-curves} by the equivalent formulas
\begin{align}\label{eq:1d-sos-domain-many-phi} &\phi_k(\pm N) = 0~,~ \phi_k(t)\geq \phi_{k+1}(t) ~\mbox{ for all $k,t$,} \qquad \phi_K(t) \geq 0~\mbox{ for all $|t|<N$}\,, \\
& \mu_{N,K}(\underline \phi) = 
\frac1{Z_{N,K}}\exp\bigg(-
\sum_{k=1}^K \sum_{t=-N}^{N-1} \Big[ \beta \big|\phi_k(t+1)-\phi_k(t)+\sgn(t)\big| %\nonumber \\ &\qquad\qquad\qquad\qquad\qquad\qquad
+ 
\frac{\fh} N %\sum_{k=1}^K 
\lambda^{k-1} 
%\sum_{t=-N}^N 
\phi_k(t) \Big]
\bigg)\,,
\label{eq:1d-sos-many-curves-phi}
\end{align}
setting hereafter $\sgn(t)=1$ if $t\geq 0$ 
and $\sgn(t)=-1$ otherwise.

The following class of processes will appear in our main result.
\begin{definition}\label{dfn:B-gate}
We denote by 
$\mu_{\lambda,\fa,\ell}^{\mathfrak{o}}$ the probability measures on $C(\R,\R^\ell)$ 
obtained as the weak limit $T \to \infty$  of the probability measures 
$\P^{\underline{0},\underline{0}}_{\ell;-T,T}\left(\cdot\mid \fa,\lambda\right)$
from \cite[(1.10)]{CIW19b}.  The existence of such a limit follows from the tightness in 
\cite[Thm~1.3]{CIW19b} together with the monotonicity property of \cite[(3.12)]{CIW19b}.
The associated stochastic process is called a Brownian \abbr{gate}
(of $\ell$ curves and area tilt $\fa \lambda^{i-1}$ for curve~$i$).
\end{definition}
We further introduce certain relevant topologies 
on stochastic processes.
\begin{definition}\label{def:shifted-conv}
We say that stochastic processes 
$\underline{\psi}_{N}: \R \to \R^j$ converge subject to $[a_N,b_N]$-shifts,   
if for any deterministic sequence $t_N \in [a_N,b_N]$, 
the sequence $\underline{\psi}_{N}(t_N+\cdot)$ converges weakly in $C(\R,\R^j)$ uniformly on compacts.
\end{definition}

For our main result we introduce a few additional objects starting with
\begin{align}
\label{eq:rw-logmgf} \Lambda(\theta) 
%&= \log\E_0[e^{\theta \fX}] \nonumber \\
&:= \begin{cases}2\log(1-e^{-\beta})-\log(1-e^{-\beta+\theta})-\log(1-e^{-\beta-\theta}),& |\theta|<\beta,\\ \infty & |\theta|\geq \beta.\end{cases}
\end{align}
Note that $ \Lambda(\theta) = \log\E_0[e^{\theta \fX}]$
for the integer-valued random variable
%(see \cref{eq:rw-logmgf} for its explicit expression).
$\fX$ of law
\begin{equation}\label{eq:rw-law} \P_0(\fX = \ell) = \frac{1-e^{-\beta}}{1+e^{-\beta}} e^{-\beta |\ell|}\qquad(\ell\in\Z)
\,.
\end{equation}
Hereafter, 
\begin{equation}\label{eq:theta-star-def}
\theta_\star = \log\cosh(\beta)\,,
\end{equation}
which is the unique positive solution of
% (see the discussion following \cref{eq:rw-logmgf})
\begin{equation}\label{eq:theta-star}
    \Lambda'(\pm \theta_\star) = \pm 1\,.
\end{equation}
Then,  for $|u| \le 1$,  we set
\begin{align}\label{eq:phi*-def}
    m_\star(u) &= 
          1-|u| + \frac{\Lambda(\theta_\star |u| )-\Lambda(\theta_\star)}{\theta_\star} \,, \\
    \label{eq:V-def}
    v_\star(u) &= 
    \frac{1+\Lambda'\left(\theta_\star u\right)}{\theta_\star}\,, 
\end{align}
with $m_\star(u)=v_\star(u)=0$ in case $|u| \ge 1$.
Next,  for
$\fh > \theta_\star$ and $\lambda >1$,  let $\alpha_0:=1$ and 
\begin{equation}\label{eq:alpha-k-def}
\alpha_k := \frac{\theta_\star}{\fh \lambda^{k-1}}\,,\quad
\fa_k := \frac{\theta_\star}{\alpha_k}\sqrt{\Lambda''(\theta_\star)}\,, \quad
\phi_k^\star(t) := \alpha_k m_\star\big(\tfrac{t}{\alpha_k}\big)\,,  \quad 
1 \le k \le K\,.
\end{equation}

\begin{figure}
%\vspace{-0.22in}
    \begin{tikzpicture}
    \node (fig1) at (0,0) {    \includegraphics[width=.87\textwidth]{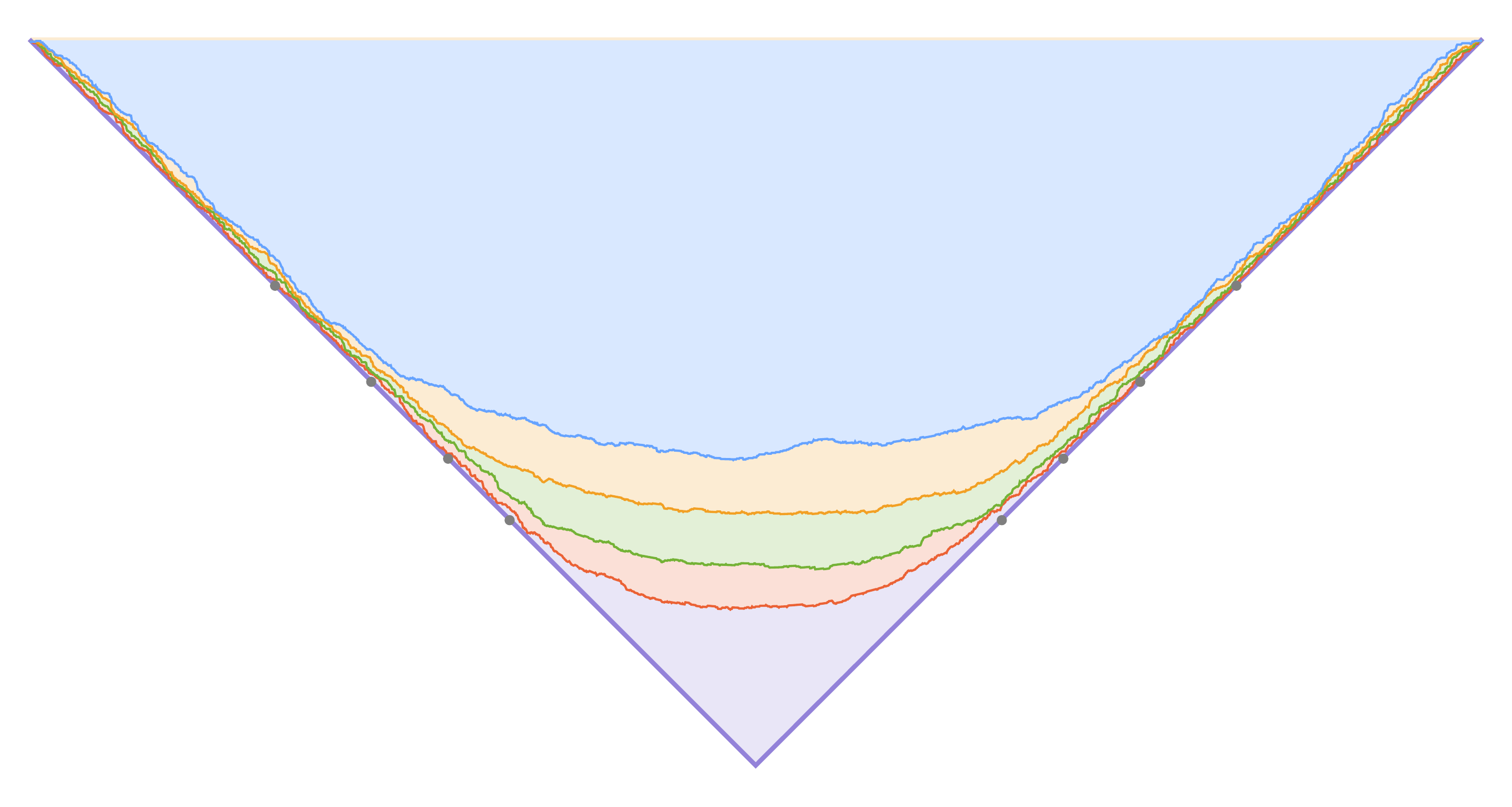}};
    \node[font=\small, rotate=-45] (am1) at (-4.65,1.05) {$-\alpha_1 N$};
    \node[font=\small, rotate=50] (ap1) at (4.6,1.05) {$\alpha_1 N$};
    \node[font=\small, rotate=-45] (am2) at (-3.85,0.25) {$-\alpha_2 N$};
    \node[font=\small, rotate=50] (ap2) at (3.8,0.25) {$\alpha_2 N$};
    \node[font=\small, rotate=-45] (am3) at (-3.05,-0.55) {$-\alpha_3 N$};
    \node[font=\small, rotate=50] (ap3) at (3.,-0.55) {$\alpha_3 N$};
    \node[font=\small, rotate=-45] (am4) at (-2.25,-1.35) {$-\alpha_4 N$};
    \node[font=\small, rotate=50] (ap4) at (2.2,-1.35) {$\alpha_4 N$};
    \node[font=\small] (zero) at (0,-3.25) {$0$};
    \end{tikzpicture}
    \vspace{-0.15in}
    \caption{The $(1+1)$\Dim \abbr{SOS} from \cref{eq:1d-sos-many-curves-phi} with $N=1000$ and $K=4$ curves. The $\alpha_k$'s, per \cref{eq:alpha-k-def}, mark the transition from flat to curved scaling limits.}
   % \vspace{0.3in}
    \label{fig:sos-sim}
\end{figure}

\begin{theorem}
    \label{thm:1}
    Fix $\beta>0$, $\lambda>1$, $\fh > \theta_\star$ and $K\geq 1$. 
    Consider the line ensemble $\{\phi_k\}_{k=1}^K$ on $[-N,N]$ with a geometric area tilt,  as in  \cref{eq:1d-sos-domain-many-phi,eq:1d-sos-many-curves-phi}.  Then,
    \begin{enumerate}[(a), leftmargin=*]
\item \label[part]{part:a} % (a) 
As $N \to \infty$,  the $\R^K$-valued process of coordinates 
 \[ 
    \widehat\phi_k(t) := \frac{\phi_k(N t) - N\phi_k^\star(t)}{\sqrt {\alpha_k \,  N}}
 \]
(linearly interpolated),  converges weakly on $C([-1,1],\R^K)$ to rescaled,  independent Brownian bridges 
$(\BB_1, \ldots,\BB_K)$,  where for \abbr{iid} standard Brownian motions $W_k$,
\begin{equation} \label{def:un-X}
\BB_k (t) := \left\{
\begin{array}{ll}
W_k(v_\star(t/\alpha_k))-\frac{v_\star(t/\alpha_k)}{v_\star(1)}  W_k(v_\star(1))
\,,  & |t| \le \alpha_k \,,\\
0\,, & \alpha_k \le |t| \le 1 \,.
\end{array} \right.
\end{equation}
\item \label[part]{part:b} %(b)  
Fix $\epsilon \in (0,\frac{1}{12})$ and consider the intervals
\[ 
I_N^{(-r)}:=-I_N^{(r)},  \qquad 
I_N^{(r)} := [\alpha_{r} N^{1/3} + 2 N^\epsilon,\alpha_{r-1} N^{1/3} - 2 N^\epsilon] \,,  \qquad 1 \le r \le K \,.
\]
Then,  for each $r=-K,\ldots,-1,1,\ldots, K$,  the $\R^{K+1-|r|}$-valued processes
\[ 
\underline{\psi}^{(r)}_N (u) := 
\frac{1}{\sqrt{\Lambda''(\theta_\star)}N^{1/3}} \big(\phi_{|r|} (N^{2/3} u),\ldots,\phi_K(N^{2/3} u) \big) 
 \]
converge 
subject to $I_N^{(r)}$-shifts
to the Brownian \abbr{gate} $\underline{X}^{(r)}$ 
of law 
$\mu^{\mathfrak o}_{\lambda,\fa_r,K-|r|+1}$.
Further,  the convergence
holds jointly over $r$,  with mutually independent limits.
\end{enumerate}
\end{theorem}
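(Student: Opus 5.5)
The plan is to first treat a single curve by an exponential change of measure, then handle the interaction between curves with monotone couplings and Gibbs resampling.

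\textbf{Single curve.} For $K=1$, summation by parts rewrites the area term $\frac{\fh}{N}\sum_t\phi(t)$ as a sum of increments $\phi(t+1)-\phi(t)$ weighted by a coefficient that is linear in $t$. Combining this with the $\beta|\cdot+\sgn(t)|$ weights, the law of $\phi$ on $[-N,N]$ becomes that of an inhomogeneous random walk, conditioned to return to $0$ and stay nonnegative. At time $t=Nu$ its step law is the exponential tilt of $\fX$ from \cref{eq:rw-law} by $\theta(u)=\theta_\star u/\alpha_1$, shifted by $\sgn(t)$. This is Doob-transformed so that the drift $\Lambda'(\theta(u))-\sgn(u)$ vanishes exactly when $|\theta(u)|=\theta_\star$, i.e.\ at $|u|=\alpha_1$, by \cref{eq:theta-star}.

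For $|u|<\alpha_1$ the drift is nonzero. A law of large numbers then gives the profile $N\phi_1^\star$, since integrating $\Lambda'(\theta_\star s/\alpha_1)$ reproduces \cref{eq:phi*-def}. A functional CLT for the triangular array, with variance $\int\Lambda''(\theta_\star s/\alpha_1)\,ds$, gives $v_\star$, and conditioning on the endpoint yields the bridge in \cref{def:un-X}. The floor is irrelevant in the bulk because $m_\star>0$ there.

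For $|u|>\alpha_1$ the optimal tilt saturates at $\pm\theta_\star$, which uses $\fh>\theta_\star$ so that $\alpha_1<1$. The walk there is a driftless walk with step variance $\Lambda''(\theta_\star)$, kept above $0$ by an area tilt of order $N^{-1}$ relative to the saturated tilt. This forces the KPZ-type scaling $(N^{2/3},N^{1/3})$. The effective area coefficient is $\fh\lambda^{k-1}=\theta_\star/\alpha_k$, which after dividing by $\sqrt{\Lambda''(\theta_\star)}$ gives exactly $\fa_k$. I would establish the single-curve versions of (a) and (b) first, including uniform (in $N$) tail bounds on $\phi_1(t)/N^{1/3}$ at every $t$ in the flat region.

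\textbf{Several curves, part (a).} I would use the FKG/monotonicity of the \abbr{sos} ensemble in the floor and in the tilts. Curve $k$ is sandwiched between a single curve with tilt $\fh\lambda^{k-1}$ above the floor, which is a lower bound, and the same curve above the rescaled profile of curve $k+1$, which is an upper bound. Since $\alpha_1>\dots>\alpha_K$ and the profiles $\phi_k^\star$ are strictly ordered on $|t|<\alpha_k$, the non-crossing constraint is inactive at scale $\sqrt N$ away from the points $\pm\alpha_k$. This gives (a) with independent bridges, by an induction from $k=K$ up to $k=1$.

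\textbf{Several curves, part (b).} Fix $r$ and $t_N\in I^{(r)}_N$, and take a mesoscopic window of width $TN^{2/3}$ around $N^{2/3}t_N$. Conditional on everything outside the window, the Gibbs property makes curves $|r|,\dots,K$ inside it an \abbr{sos} line ensemble with area tilts $\fa_{|r|}\lambda^{i-1}$ (after rescaling), with boundary values that are tight at scale $N^{1/3}$ by the single-curve tail bounds and monotonicity. Curve $|r|-1$ sits at height at least of order $N^{-1}(N^{2/3+\epsilon})^2=N^{1/3+2\epsilon}\gg N^{1/3}$ throughout the window, by the quadratic vanishing of $m_\star$ at $\pm1$ together with the $2N^\epsilon$ buffer. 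So it is effectively an infinitely high ceiling.

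An invariance principle for the walk ensemble then gives convergence to the finite-$T$ measures $\P^{\underline0,\underline0}_{\ell;-T,T}(\cdot\mid\fa_{|r|},\lambda)$ with random boundary data. Sending $T\to\infty$, the tightness and monotonicity of \cite{CIW19b} (as in \cref{dfn:B-gate}) remove the dependence on the boundary data and yield $\mu^{\mathfrak o}_{\lambda,\fa_{|r|},K-|r|+1}$. For joint convergence with independence over $r$, note that the windows are separated by macroscopic distances. The curves between them either have their boundary effects forgotten, by the same $T\to\infty$ mixing, or are at heights $\gg N^{1/3}$. Conditioning on the configuration at a separating time then decouples the windows.

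\textbf{Main obstacle.} The hardest step is the uniform control near the transition points and in the flat regions. Specifically, I need the bottom curves to be $O(N^{1/3})$ uniformly over $t\in I^{(r)}_N$, and curve $|r|-1$ to be $\gg N^{1/3}$ uniformly on the window even when $t_N$ is within $2N^{\epsilon+2/3}$ of $\alpha_{|r|-1}N$. I would first try moderate-deviation estimates for the tilted walk near the saturation point, with $\epsilon<1/12$ controlling the error terms, combined with monotone coupling to the single-curve case.
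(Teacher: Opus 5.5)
\textbf{There is a genuine gap in \cref{part:a} for $K\ge 2$, and it sits where the paper does its main work.}

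Your sandwich is not valid as stated. Curve $k\ge2$ also has the ceiling $\phi_{k-1}$, and by \cref{lem:Qh-monotonicity} removing a ceiling pushes a curve \emph{up}. So the single curve with tilt $\fh\lambda^{k-1}$ above the floor $0$ and with no ceiling is not a lower bound. Likewise, the profile $N\phi^\star_{k+1}$ is not an admissible floor for an upper bound, because $\phi_{k+1}$ exceeds it: by $N^{1/3}$ times growing factors on its flat part, and by order $\sqrt N$ in its bulk. Both defects can be repaired with deterministic envelopes that hold with high probability.

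The deeper problem is that separate stochastic sandwiches for each $\phi_k$ only identify the \emph{marginal} limit of each $\widehat\phi_k$. They say nothing about the joint law, so independence of $(\BB_1,\ldots,\BB_K)$ does not follow from an induction over $k$. To make your route work you would need a single coupling of the whole ensemble between two \emph{product} measures of independent curves with deterministic floors and ceilings, plus a single-curve theorem robust to such envelopes. The paper instead gets the joint law from structure:
\begin{itemize}
\item \cref{lem:tilting} writes $\mu_{N,K}$ as a product measure $\Q_\fh$ conditioned on non-crossing.
\item \cref{lem:non-crossing-BB} shows that non-crossing in the bulk is asymptotically irrelevant.
\item \cref{lem:Q-A} exhibits the bulk pieces as independent inhomogeneous walks, tied to the area-tilted flat pieces only through the matching event $\Gamma$.
\end{itemize}

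\textbf{Even for one curve, your step ``conditioning on the endpoint yields the bridge'' skips the key point.} The bulk walk is pinned at the \emph{random} junction heights $\underline y^{(\pm)}$ produced by the flat pieces, not at fixed points. The paper needs three things here:
\begin{itemize}
\item These heights concentrate within $N^\epsilon\sqrt\Delta=o(\sqrt N)$ of their $\Q_\star$-means at distance $\Delta=N^{2/3+\epsilon}$ inside the curved part (\cref{prop:v1-pm}). The rates must absorb the $O(N^{K/2})$ Radon--Nikodym factor in \cref{eq:amir1}.
\item A local CLT (\cref{lem:lclt}) decouples $\Phi^-,\Phi^o,\Phi^+$ under $\Gamma$ (\cref{lem:v1-pm}).
\item A conditional invariance principle for the non-i.i.d.\ array, uniform over endpoint values $|z|\le N^{1/2-\epsilon'}$.
\end{itemize}

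\textbf{For \cref{part:b}, the $N^{1/3}$-tightness of the window boundary values does not follow from single-curve bounds plus monotonicity.} This concerns every curve above the bottom one. A monotone comparison needs a deterministic floor dominating the curve below on the whole comparison interval. Over lengths $\Theta(N)$ this only gives $N^{1/3}$ up to polylogarithmic or $N^{\epsilon}$ factors, which is exactly what \cref{prop:N-third-almost-tight} provides. Before \cite{Serio23} can be invoked, the paper needs the bootstrap of \cref{lem:N-third-tight}. That argument localizes to windows of length of order $m^2N^{2/3}$ and iterates $m\mapsto(\log m)^2$, using \cite[Lem.~4.2, Cor.~5.2]{HKS25}.

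A minor point: $\fa_k$ is $\fh\lambda^{k-1}$ \emph{multiplied} by $\sqrt{\Lambda''(\theta_\star)}$, not divided.
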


We note that
the case $K=1$ of a single curve (which is motivated by the Ising model) was recently studied in \cite{OSV24}; our work answers in the affirmative one of their conjectures: see \cref{rem-Senya} below.

\begin{remark}\label{rem:ofer1}  \cref{part:a}
of our main theorem says that the $k$-th curve from the top
separates from the curves below it during $[-\alpha_k N,\alpha_k N]$,  and upon centering around
$N \phi^\star_k(\cdot)$ those separated curves jointly possess at $(N,\sqrt{N})$-diffusive scaling 
the limit law of the independent Brownian bridges $(\BB_1,\ldots,\BB_K)$.  
As seen in \cref{fig:sos-sim,fig:scaling-limit},  while all $K$ curves are separated during 
$[-\alpha_K N, \alpha_K N]$,  fixing $1 \le r \le K$,  the $(K+1-r)$ bottom 
curves have not separated during $[-\alpha_{r-1}N,-\alpha_r N] \cup [\alpha_r N, \alpha_{r-1} N]$. 
By \cref{part:b},  at the slower
$(N^{2/3},N^{1/3} \sqrt{\Lambda''(\theta_\star)})$-diffusive scaling,  these non-separated curves
converge in law as $N \to \infty$ to the corresponding 
Brownian \abbr{gate}.  While the rescaled time intervals are of
$\Theta(N^{1/3})$ width,  our convergence is local,  over $O(1)$ limit intervals,
arbitrarily centered (up to $\Omega(N^\epsilon)$ distance from each transition point), 
with mutually independent instances of the limiting Brownian \abbr{gate}.
\end{remark}

\begin{figure}
{\definecolor{wedgepurple}{RGB}{140,30,140}
\definecolor{wedgegreen}{RGB}{0,140,0}
\definecolor{wedgeblue}{RGB}{30,30,230}
\definecolor{wedgeorange}{RGB}{240,140,30}
\vspace{-0.1in}
\begin{tikzpicture}
  \begin{scope}
    \node[draw, thin, gray, inner sep=2pt] (fig1) at (0,0) {    \includegraphics[width=.35\textwidth]{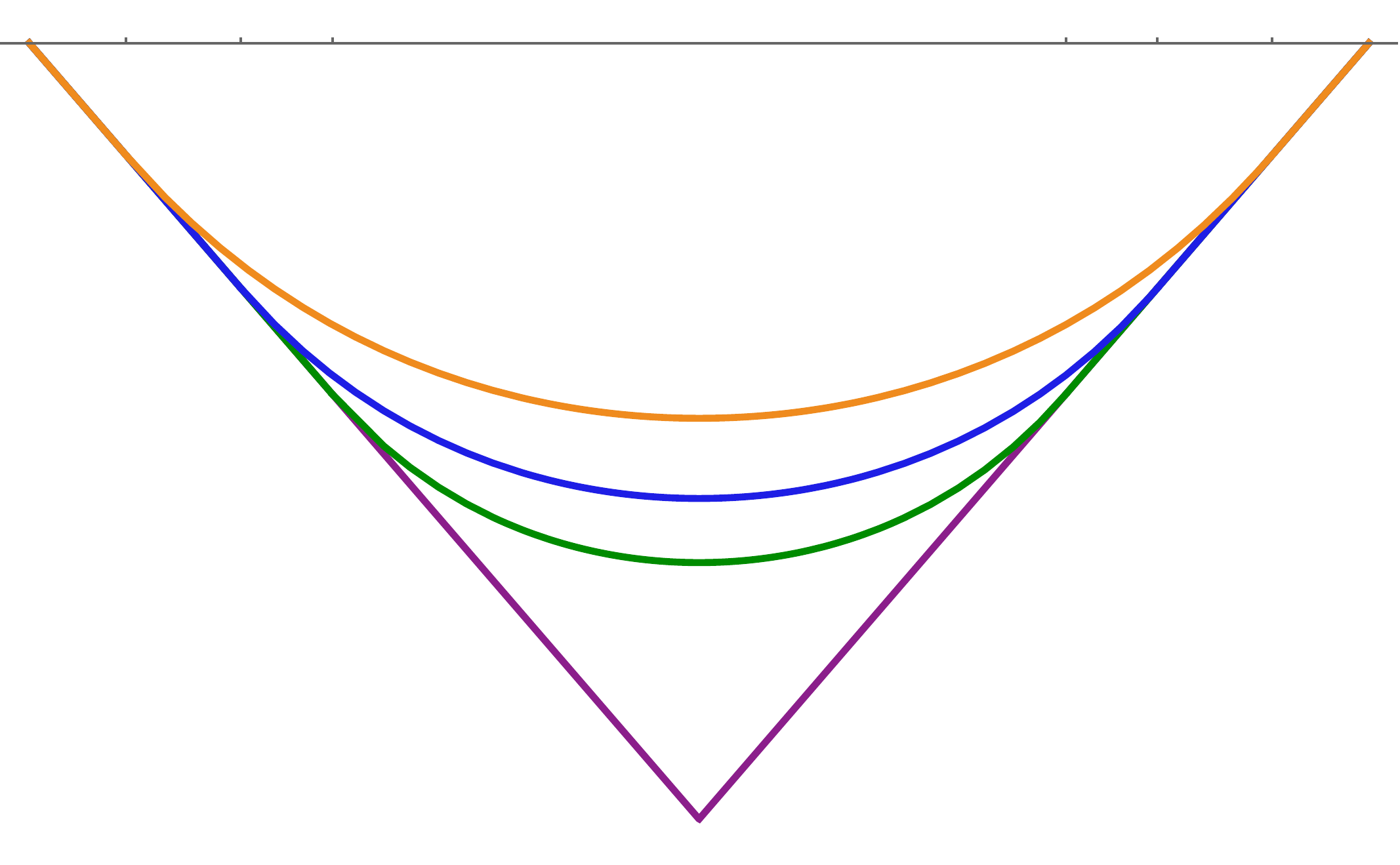}};

    \foreach \x/\lab/\col in {-2.08/{-\alpha_1 N}/wedgeorange, -1.66/{-\alpha_2 N}/wedgeblue,
                             -1.33/{-\alpha_3 N}/wedgegreen,  1.33/{\alpha_3 N}/wedgegreen,
                              1.66/{\alpha_2 N}/wedgeblue,    2.08/{\alpha_1 N}/wedgeorange}
    \draw (\x,1.36) -- (\x,1.46) node[xshift=1pt,yshift=8pt,font=\tiny,\col,rotate=45] {$\lab$};

    \draw[black,dotted,thick] (-1.91,0.16) rectangle ++(0.5,0.5);

    \node[below,wedgegreen,font=\tiny] at (0,-0.4) {$\varphi_1$};
    \node[below,wedgeblue,font=\tiny]   at (0,0.1) {$\varphi_2$};
    \node[below,wedgeorange,font=\tiny]  at (0,0.5) {$\varphi_3$};
  
  \end{scope}

    \begin{scope}[yshift=-95pt]
        \node[draw, thin, gray, inner sep=2pt] (fig2) at (0,0) {    \includegraphics[width=.35\textwidth]{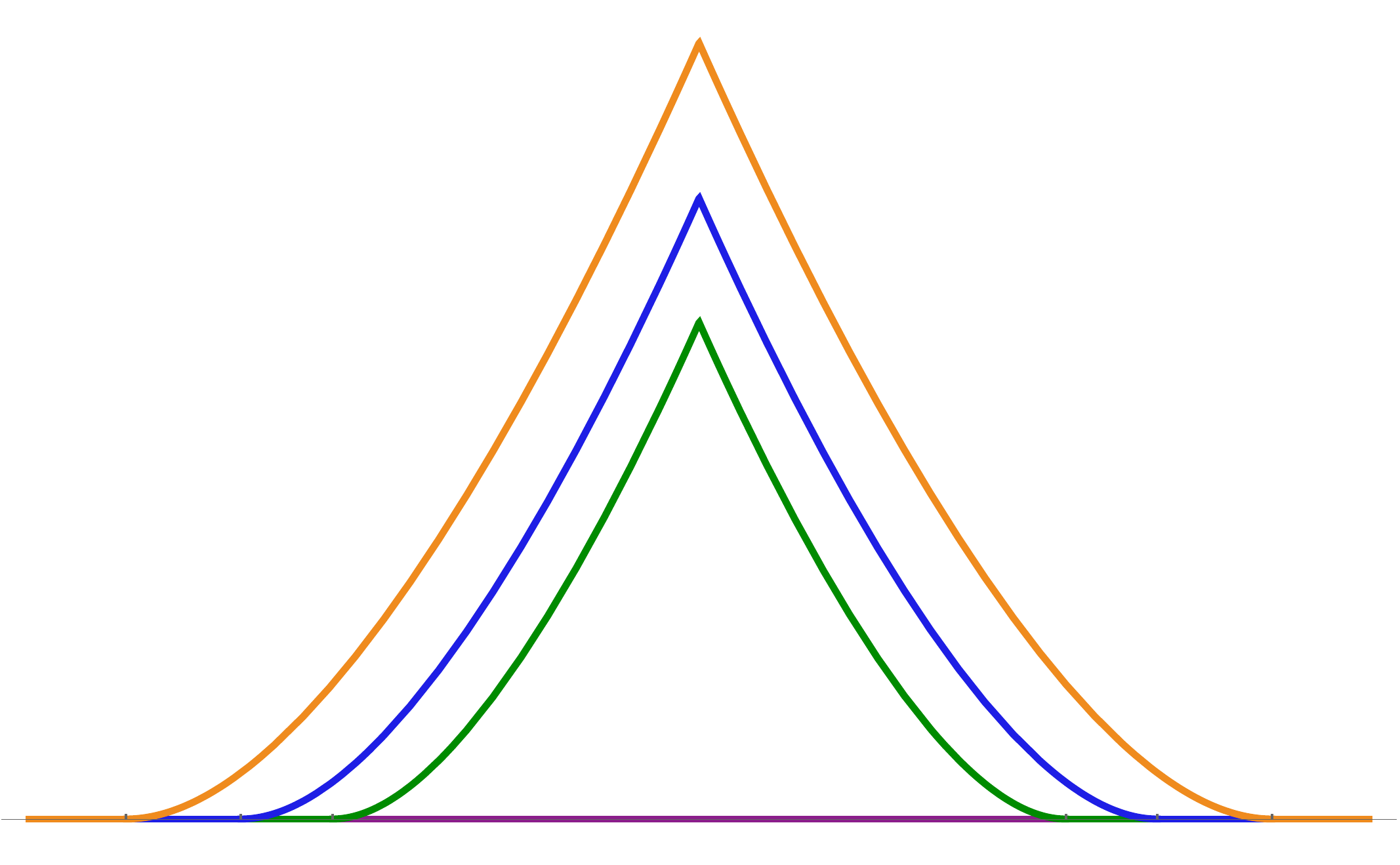}};

    \foreach \x/\lab/\col in {-2.08/{-\alpha_1 N}/wedgeorange, -1.66/{-\alpha_2 N}/wedgeblue,
                             -1.33/{-\alpha_3 N}/wedgegreen,  1.33/{\alpha_3 N}/wedgegreen,
                              1.66/{\alpha_2 N}/wedgeblue,    2.08/{\alpha_1 N}/wedgeorange}
    \draw (\x,-1.36) -- (\x,-1.46) node[left=-3pt,font=\tiny,\col,rotate=45] {$\lab$};

    \draw[black,dotted,thick] (-1.91,-1.48) rectangle ++(0.5,0.5);

    \node[below,wedgegreen,font=\tiny] at (0,0.25) {$\phi_1$};
    \node[below,wedgeblue,font=\tiny]   at (0,0.8) {$\phi_2$};
    \node[below,wedgeorange,font=\tiny]  at (0,1.3) {$\phi_3$};
    \end{scope}

      \begin{scope}[xshift=180pt,yshift=-45pt]
  \node (fig3) at (0,0) {    \includegraphics[width=.5\textwidth]{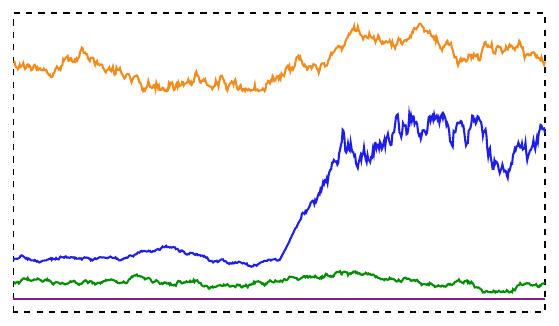}};

      \node[below,wedgeblue,font=\small]   at (0,-0.2) {$\widehat\phi_2$};

    \node[text=gray,font=\small] (bblabel) at (1.75,-2.5) {$(N,\sqrt{N})$-rescaled $\BB_2$};

    \node[text=gray,font=\small] (bblabel) at (-1.3,-3.1) {$(N^{2/3},N^{1/3})$-rescaled ${\underline X}^{(2)}$};

    \draw[very thin, gray,-{Stealth[scale=1.5]}] (1.75,-2.3)--(1.75,0.1);

    \draw[very thin, gray,-{Stealth[scale=1.5]}] (-1.3,-2.9)--(-1.3,-1.7);

    \end{scope}  
\end{tikzpicture}
}
\vspace{-0.1in}
\caption{Illustration of the transformation from $\{\varphi_k\}$ to $\{\phi_k\}$ and the limit law of \cref{thm:1} for $\{\widehat\phi_k\}$.}
\label{fig:scaling-limit}
\end{figure}

\begin{remark}
Our assumption that $\lambda>1$ is an essential part of the model definition (whereby the area tilt increases geometrically for lower level lines), whereas the assumption $\mathfrak{h}>\theta_\star$ is  just to simplify the statement: without it,  the $\phi_i$'s with $\mathfrak{h} \lambda^{i-1} \leq \theta_\star$ do not have a flat portion and converge to independent Brownian bridges on $[-1,1]$.
\end{remark}
\begin{remark} Our proof of  \cref{thm:1} actually yields the joint convergence of the processes in \cref{part:a,part:b} to independent limits. In order to avoid cumbersome notation and definitions, we chose to state the theorem without this extension.
\end{remark}
\begin{remark}
We do not address  the interesting case where the number of curves $K$ tends to infinity with $N$. (The particular case $K=\Theta(\log N)$ is pertinent due to its link with the  $(2+1)$\Dim  \abbr{sos} model, as discussed in the next section).
Most of our arguments (perhaps except for  \cref{lem:N-third-tight})  carry over as soon as $K$ grows sub-polynomially, but many  adaptations are needed, and the 
notation and even statements become rather cumbersome. 
\end{remark}

\subsection{Motivation and related works}\label{sec:lit}
The \abbr{SOS} model was introduced in the early 1950s, in $(1+1)$\Dim by Temperley~\cite{Temperley52} and in $(2+1)$\Dim
by Burton, Cabrera and Frank~\cite{BCF51}. It has been extensively studied as a model for the low temperature evolution of crystals and the plus/minus interfaces in the Ising model. The roughening transition, which remains a tantalizing open problem awaiting rigorous confirmation in 3\Dim Ising, was verified for the (much more tractable) $(2+1)$\Dim \abbr{sos}. There, it was shown that the height at the origin $o$ in a box of side-length $N$ moves from having $\Var(\varphi(o))=O(1)$ at low temperature (\cite{BrandenbergerWayne82}) to $\Var(\varphi(o))\asymp \log N$ at high temperature (the celebrated works \cite{FrohlichSpencer81a,FrohlichSpencer81b}).

The low temperature picture becomes considerably richer in the
presence of a floor (restricting $\varphi$ to $\Z_+$).  For the $(2+1)$\Dim \abbr{sos} in a box,  the pioneering work of Bricmont, El-Mellouki and Fr\"ohlich~\cite{BEF86} showed that this creates \emph{entropic repulsion}: for large $\beta$, the height of the surface in the bulk (which, in the absence of a floor, is concentrated about $0$ with exponential tails) is propelled to order $\log N$. 
By now much more is known about the shape of the surface in this scenario. The works
\cite{CLMST14,CLMST16} established that the level lines of the surface typically form a sequence of either $\lfloor \frac1{4\beta} \log N\rfloor$ or $\lfloor \frac1{4\beta} \log N\rfloor-1$ nested macroscopic loops. The innermost loops, $\mathfrak{L}_1,\mathfrak{L}_2,\ldots$, each have a macroscopic scaling limit that reveals a Wulff shape near the $4$ corners of the box,  while coinciding with the side of the box near the center-sides.  Along these ``flat'' portions, 
the level lines are conjectured to have the scaling limit of a sequence of Brownian polymers,  each tilted by the area below it via a factor of about $\lambda^k / N$,  where $k$ is the index of the level line, conditioned to not cross one another.
A single such curve (in \abbr{sos} there are $\asymp \log N$ interacting curves) has $N^{1/3}$ fluctuations and the limit law of a Ferrari--Spohn~\cite{FerrariSpohn05} diffusion.  In \abbr{sos} near the center-side, the fluctuations are at most $N^{1/3+o(1)}$ (\cite{CLMST16}) and of order at least $N^{1/3}$ (\cite{CKL24}), but the limiting law remains out of reach. See \cite{CIW19a,CIW19b,DLZ24,CG23,Serio23,HKS25,BCG25} for works on  $(1+1)$\Dim models approximating the $(2+1)$\Dim level lines along the flat portion of the limit: a Geometrically-Area-Tilted Ensemble of Brownian polymers---a Brownian \abbr{gate}, and its discrete counterpart, a \abbr{rw} \abbr{gate}.

Near the corners of the box, where the scaling limit is nontrivial, the level lines are conjectured to behave as a Brownian motion once centered about their mean: see, e.g., the discussion following Eq.~(3.6) in~\cite{SchonmannShlosman95}, and the following from \cite{CLMST16}: ``{\em it seems natural to conjecture that normal fluctuations appear along the curved portions, where the limiting shapes corresponding to distinct levels are macroscopically separated.}''
However, the only known upper bound on these fluctuations is $N^{1-\epsilon}$ and
no nontrivial lower bound is known. Of particular difficulty is understanding the behavior of the level lines as their scaling limit transitions from flat (along the boundary) to curved (near the corners), and the effect this has on the law in the bulk of the curved portion.

Just as the works on ensembles of non-crossing $(1+1)$\Dim \abbr{sos} curves with geometric area tilts mimicked the $(2+1)$\Dim level lines near the flat portion of their limit, placing this ensemble above a wedge shaped wall mimics the $(2+1)$\Dim behavior in a quadrant of the box, including around the transition points from flat to curve limits (see \cref{fig:sos-sim}).

To explain our specific law \cref{eq:1d-sos-many-curves} 
for polymers in a wedge,  let us elaborate on the known behavior of the level line loops in the $(2+1)$\Dim \abbr{sos}. Consider values of $N$ such that,  as per \cite{CLMST16}, the \abbr{sos} surface in a $2N\times 2N$ box typically forms a plateau at height $\lfloor\frac1{4\beta}\log N\rfloor$ (as opposed to $\lfloor \frac1{4\beta} \log N\rfloor-1$). To zoom in on (and simplify) a quadrant of this surface, let us consider an $N\times N$ box with boundary conditions that are $\lfloor \frac1{4\beta} \log N\rfloor$ on the top and right sides, while they are  $\lfloor \frac1{4\beta} \log N\rfloor - K$ for some fixed $K\geq 1$ on the bottom and left sides. This forces $K$ level lines $\gamma_1,\ldots,\gamma_{K}$ connecting the upper-left and bottom-right corners (an $h$ level line is a path consisting of edges dual to $x\sim y$ where the \abbr{sos} configuration is at least $h$ at $x$ and less than $h$ at~$y$; locations where $4$ such edges touch one vertex are resolved via a global splitting rule, making the resulting path simple; in the above, $\gamma_k$ corresponds to $h=\lfloor\frac1{4\beta}\log N\rfloor-k+1$).
One of the key ingredients in the analysis of \cite{CLMST16} was that, in such a scenario,
\[ \P(\gamma_1,\ldots,\gamma_{K}) \propto \exp\bigg(-\sum_{k=1}^K\Big[\beta|\gamma_k| + \widehat{\pi}_\infty\left(\varphi(o) = \lfloor\tfrac1{4\beta}\log N\rfloor-k+1\right)|V_{\gamma_k}|+\mathrm{Errs}\Big]\bigg)\,,\]
where $|\gamma_k|$ is the level-line length (number of edges), $\widehat{\pi}_\infty$ is the $\infty$-volume measure on \abbr{sos} configurations $\varphi$ at inverse-temperature $\beta$, 
and $V_{\gamma_k}$ is the area sandwiched between $\gamma_k$ and the left and bottom sides. (See \cite[Eq.~(1.6) and Prop.~2.16]{CLMST16} for the high-level justification of this via cluster expansion and formal corresponding estimate; see also \cite[\S4]{ChenLubetzky26} and in particular \cite[Prop.~4.3, Cor.~3]{ChenLubetzky26} where such estimates were established for the $|\nabla\varphi|^p$ models for all $p\geq 1$; \abbr{sos} is $p=1$; 
$\Z$\abbr{gff} (Discrete Gaussian) is $p=2$). Note that the entropic repulsion phenomenon is visible in this formula:
\begin{enumerate}[(a)]
\item Energy: Every edge of $\gamma_k$ represents one unit of height discrepancy (the $\gamma_k$ paths are non-crossing, but they may overlap, which occurs whenever the gradient along them is larger than $1$), hence the energetic term $\exp(-\beta \sum_k |\gamma_k|)$. 
\item Entropy: If $h = \lfloor\frac1{4\beta}\log N\rfloor - k + 1$, then $\gamma_k$ delimits the rigid $h$ and $h-1$ phases, and the sites in $V_{\gamma_k}$ are in the $h-1$ phase. There, the surface must (mostly) forbid downward deviations of $h$ to remain nonnegative; the associated  entropic term is then $\approx \prod_{x\in V_{\gamma_k}}(1-\widehat\pi_\infty(\varphi(x)  = -h)) \approx \exp[-\widehat\pi_\infty(\varphi(o) = -h)|V_{\gamma_k}|]$. 
\end{enumerate}
In the \abbr{sos} model, $\widehat\pi_\infty(\varphi(o)=h) = (c+o(1))e^{-4\beta h}$ %for $c=1\pm \epsilon_\beta$ 
(cf.~\cite[Lem.~2.4]{CLMST16}), and we arrive at
\[ \P(\gamma_1,\ldots,\gamma_{K}) \propto \exp\bigg(-\sum_{k=1}^K\Big[\beta|\gamma_k| + \frac{c}{N} e^{4\beta (k-1)} |V_{\gamma_k}|+\mathrm{Errs}\Big]\bigg)\,.\]
(Of~course, the term $\mathrm{Errs}$ above hides highly nontrivial interactions between $\gamma_i,\gamma_j$ (beyond non-crossing), between $\gamma_i$ and itself, and between $\gamma_i$ and the boundary of the box. These interactions are the main obstacle 
in the analysis of these \abbr{sos} level lines.) 

It is well-known that if $\P(\gamma)\propto \exp(-\beta|\gamma|+\mathrm{Errs})$, i.e., with no area tilt, and $\beta$ is large enough, then under cluster-expansion-type assumptions on the $\mathrm{Errs}$ (satisfied by the \abbr{sos} estimates), the scaling limit of $\gamma$ would be a Brownian bridge (see, e.g., \cite{Higuchi79}, where this was applied to recover the limit of the 2\Dim Ising interface at low temperature). 

The extra area term $|V_{\gamma_k}|$ appearing above motivated a line of studies on random walks and Brownian motion polymers penalized by the area below them. For a single random walk with this area tilt, the limit is known~\cite{ISV15} to be a Ferrari--Spohn diffusion. Recently, using Ornstein--Zernike machinery, \cite{IOSV21} resolved the longstanding problem of showing that the low temperature 2\Dim Ising interface with critical prewetting has a Ferrari--Spohn limit; the law of this interface $\gamma$ is roughly $\exp[-(\beta|\gamma|+\frac{c}N|V_{\gamma}|+\mathrm{Errs})]$, and indeed the Ornstein--Zernike framework allowed the authors to reduce the problem to an area-tilted random walk, whose analysis thereafter yielded the sought limit law.
For a finite collection of such walks with the same area tilt, the limit is known~\cite{IVW18} to be a set of non-crossing  Ferrari--Spohn diffusions.  (In the $|\nabla\varphi|^p$ models, it is known~\cite{ChenLubetzky25} that, for any $p>1$
%and $m$ fixed,  
the scaling limit of the top $m$ level lines is that of $m$ \abbr{iid} Ferrari--Spohn diffusions; however,  the \abbr{sos},  where $p=1$,  should behave differently).

Studying Brownian motion/random walks where $\gamma_k$ is tilted by the area below it, with a prefactor that changes with $k$, is already nontrivial: the dependency on $k$ destroys exchangeability, and already deriving the correct scale is nontrivial. 
For geometrically increasing area tilts, as in the formula for $\P(\gamma_1,\ldots,\gamma_k)$, the works~\cite{CIW19a,CIW19b,DLZ24,CG23,BCG25} studied the continuous line ensemble: Brownian polymers where the $k$-th one, $X_t^{(k)}$, is tilted by $\exp(-\mathfrak{a} \lambda^{k-1} \int X_s^{(k)}\d s)$. (The factor $1/N$ scales out when moving to Brownian motion.) 
The stationary limit law $\mu_{\lambda,\mathfrak{a},K}^{\mathfrak{o}}$
 of the Brownian \abbr{gate} with $K$ lines,  is the same when the boundary conditions on the polymers are either $0$ or free.  Some of these works (in particular, \cite{CIW19b,CG23}) allowed for taking the limit $K\to\infty$, and a spectral gap in that situation was recently proved in \cite{GW26}. The works \cite{Serio23,HKS25} studied the delicate discrete Random Walk Geometrically-Area-Tilted Ensemble
(\abbr{rw} \abbr{gate}), 
 which has the same limit law.

All these works considered the ensemble of area tilted polymers/random walks above a flat boundary,   where the fluctuations are of order $N^{1/3}$.  Our work sheds light on the \abbr{sos} corner behavior, and the conjectured transition of its level-line fluctuations from order $ N^{1/3}$ to order $\sqrt{N}$.  The ensemble of area-tilted $(1+1)$\Dim \abbr{sos} lines over wedge,  as in \cref{eq:1d-sos-domain-many,eq:1d-sos-many-curves},
approximates the $(2+1)$\Dim \abbr{sos} level lines in the same vein of the Brownian polymers/random walk approximation, and should serve as a stepping stone to analyzing the $(2+1)$\Dim \abbr{sos} (just as the random walk results were used in the study of the 2\Dim Ising and $(2+1)$\Dim \abbr{sos}/$\Z$\abbr{GFF}/$|\nabla\varphi|^p$ models via Ornstein--Zernike tools). 

\subsection{Proof outline}
The main challenge in establishing \cref{thm:1} stems from 
%having 
the normalizing partition function $Z_{N,K}$ in \cref{eq:1d-sos-many-curves-phi}.  With multiple curves, 
%(and the difficulty arises already when there are two),  
as explained in \cref{rem:ofer1},
the system behaves differently in different segments of the interval $[-N,N]$: one by one,  curves disengage from the flat portion,  where the limiting law of $\ell$ curves is the aforementioned $\mu_{\lambda,\fa,\ell}^{\mathfrak{o}}$ for suitable $\lambda,\fa$, and become Brownian bridges (with $\sqrt N$ fluctuations) in the remaining ``middle'' interval.  One has only an expression for the entire system, involving $Z_{N,K}$,  and the main challenge is to extract from it a tractable expression for the area-tilted subsystem on one of these segments.
Equivalently,  the law of the heights of the processes at the endpoints of such a segment is nontrivial,  and one cannot easily disentangle the system and arrive at a tractable collection of Markov processes.  

To handle this,  we first perform in \cref{sec:ch-meas} a careful change of measure that trades the 
area tilt of $\phi_i$ in $[-\alpha_i N,\alpha_i N]$ with curves $\widetilde{\phi}_i$ of 
drift roughly $c_i (j/N)$ at $\pm (\alpha_i N - j)$ with $c_i>0$,  drawn from $\Q_\fh (\cdot)$ of \cref{def:Qh} 
prior to being ordered (cf.~\cref{lem:tilting}).  \cref{sec:prelim} introduces two key
ingredients for our proof: a stochastic ordering of line ensemble laws and,  
following \cite{ISV15}, a bound on the height of a single area-tilted \abbr{rw} (adapted to the free right-end boundary we have here).  Combining these with estimates from \cite[Sec.~5]{HKS25},
\cref{prop:N-third-almost-tight} shows that under $\Q_\fh$,  the area-tilted parts left of $-\alpha_i N$
(called $\Phi^-$),   and on the right of $\alpha_i N$ (called $\Phi^+$), 
%(on the left side and on the right side),
are \abbr{whp} of height at most $ N^{1/3+\epsilon}$,  while \cref{prop:v1-pm}
shows that,  as expected,  $\widetilde{\phi}_i(-\alpha_i N + \Delta)$ concentrates 
at height $C \frac{\Delta^2}{N}$ for $N \gg \Delta \gg N^{2/3}$.  This allows us to   
effectively omit the constraints 
$\widetilde{\phi}_i(\cdot) \ge \widetilde{\phi}_{i+1}(\cdot)$ throughout $(-\alpha_i N + \Delta,\alpha_i N -\Delta)$,
resulting in a product law
over the \abbr{rw} \abbr{gate} pieces $\Phi^-$,  $\Phi^+$, 
and the separated middle part $\Phi^o$ (which has no area tilt),
subject to those three parts having matching boundary values (see 
\cref{lem:non-crossing-BB,lem:Q-A}).  Thanks to a local \abbr{clt} for the boundary of 
$\Phi^o$ in the absence of $\Phi^{\mp}$,  said boundary values remain $o(\sqrt{N})$ 
under our joint law (see \cref{lem:v1-pm}),  yielding in \cref{subsec:X0} the convergence
of \cref{part:a}, to the collection of independent Brownian bridges.
\cref{lem:v1-pm} further decouples the area-tilted parts $\Phi^-$ and $\Phi^+$,  whereby 
the tightness in \cref{lem:N-third-tight} allows us to complete the proof of \cref{thm:1} via
conditional convergence properties from \cite{Serio23}.

\subsection*{Acknowledgments} We thank Christian Serio for helpful discussions concerning \cite{Serio23} and \cite{HKS25}. This work was supported by US-Israel BSF grant \#2024020.

\section{Change of measure and decomposition}\label{sec:ch-meas}

Recall that $\Lambda(\cdot)$ of \eqref{eq:rw-logmgf} is the log-\abbr{mgf} 
of the random variable $\fX$ of \cref{eq:rw-law}. 
% In particular,  
%\begin{align}
%\label{eq:rw-logmgf} \Lambda(\theta) &= \log\E_0[e^{\theta \fX}] \nonumber \\
%&= \begin{cases}2\log(1-e^{-\beta})-\log(1-e^{-\beta+\theta})-\log(1-e^{-\beta-%\theta}),& |\theta|<\beta,\\ \infty & |\theta|\geq \beta.\end{cases}
%\end{align} 
Observe that 
$\Lambda(\cdot)$ is even (by the symmetry of $\P_0(\fX\in\cdot)$),  $\Lambda'(\cdot)$ is increasing in $(-\beta,\beta)$ (by the strict convexity of $\Lambda(\cdot)$), and the odd function $\theta\mapsto \Lambda'(\theta)$ is $C^\infty$ in the interval $(-\beta,\beta)$, going to $\pm\infty$ as $\theta\to\pm \beta$; in particular, there is a unique solution to $\Lambda'(\pm x) = \pm 1$, so $\theta_\star$ from \cref{eq:theta-star} is well-defined (as are the values $1>\alpha_1>\alpha_2>\cdots>\alpha_K>0$ from \cref{eq:alpha-k-def}). One can  verify from \cref{eq:rw-logmgf} that $\Lambda'(\theta)=\sinh(\theta)/(\cosh(\beta)-\cosh(\theta))$ for $|\theta|<\beta$,  hence 
$\theta_\star = \log\cosh(\beta)$ as stated in \cref{eq:theta-star-def}.

Next, define the random variable $\tilde\fX$ to be the tilted version of the law of $\fX$ given by
\begin{equation}
    \label{eq:P-theta}
    \P_\theta(\tilde\fX=\ell) =  \frac{1-e^{-\beta}}{1+e^{-\beta}} 
     e^{-\beta|\ell|+\theta \ell - \Lambda(\theta)}\qquad (\ell\in\Z)\,,
\end{equation}
noting that 
\begin{equation}
    \label{eq:E-theta}
    \E_\theta(\tilde\fX) = \Lambda'(\theta) \qquad \mbox{and}\qquad
    \Var_\theta(\tilde\fX) = \Lambda''(\theta)\,.
\end{equation}
In the sequel we use, for $-N\leq j \leq N$, the tilting
\begin{equation}\label{eq:theta_k-def}
\theta_k(j) =  \big( (1 \wedge \tfrac{j+0.5}{N \alpha_k}) 
\vee -1 \big) \theta_\star \,,
%\label{eq:theta_k-def}
\end{equation}
which coincides with $\pm \theta_\star$ outside the interval $[a_k^-,a_k^+)$, where
\begin{equation}\label{def:akpm}
a_k := \alpha_k N + \tfrac{1}{2} , \qquad 
a_k^\pm := \pm \lfloor a_k \rfloor \,, \qquad \{a_k\} = a_k - a_k^+ \in [0,1) \,.
\end{equation}
It is easy to check that by \cref{eq:alpha-k-def,eq:phi*-def},
for all $1 \le k \le K$ and $t \in [-1,1]$, 
\begin{equation}\label{eq:phi*-def-as-int}
    \phi_k^\star(t) = \int_{-1}^t \bar\phi_k^\star(s)\,\d s \ge 0, 
    \quad\mbox{where}\quad \bar\phi_k^\star(s) = \begin{cases}
        \Lambda'\left(\frac{\theta_\star}{\alpha_k} s \right) - \sgn(s), 
        & |s|\leq \alpha_k, \\
        0, & |s|\geq \alpha_k
        \end{cases}\,,
\end{equation}
and likewise, from \cref{eq:theta-star,eq:V-def} it follows that for any $t \in [-\alpha_k,\alpha_k]$,
\begin{equation}
    \label{eq:V-def-as-int}
    v_k(t) := \alpha_k v_\star\Big(\frac{t}{\alpha_k}\Big) =
    \int_{-\alpha_k}^t \Lambda''\Big(\frac{\theta_\star}{\alpha_k} s\Big)\,\d s\,.
\end{equation}
In view of \cref{eq:E-theta,eq:theta_k-def}, we see that 
for some $V=V(\beta)<\infty$ and all $j,k,N$,
\begin{equation}\label{eq:var-bd}
0 < \inf_{|\theta|\leq\theta_\star} \Lambda''(\theta) \le 
\Var_{\theta_k(j)}(\tilde\fX) \le 
\sup_{|\theta|\leq\theta_\star} \Lambda''(\theta) \le V \,. 
\end{equation}
Moreover, for some $C=C(\beta)<\infty$ and all $k$, $N$
\begin{align}\label{eq:phi-star-equiv}
\bigg|N \phi_k^\star(t) - \sum_{j=-N}^{\lfloor Nt\rfloor-1} \Big[\E_{\theta_k(j)}(\tilde\fX) - \sgn(j)\Big] \bigg| & \le C \,,
\qquad \forall t \in [-1,1]\,, \\
\bigg|N v_k(t) - \sum_{j=a_k^-}^{\lfloor Nt\rfloor-1} \Var_{\theta_k(j)}(\tilde\fX) \bigg| & 
\le C \,, \qquad \forall t \in [-\alpha_k,\alpha_k] \,.
\label{eq:vk-equiv}
\end{align}
(Indeed,  this follows from \cref{eq:E-theta,eq:theta_k-def,def:akpm,eq:phi*-def-as-int,eq:V-def-as-int}, comparing the Riemann sum to the integral.)

In light of \cref{eq:V-def-as-int,eq:phi-star-equiv,eq:vk-equiv}, it suffices to prove the analog of \cref{thm:1} \abbr{wrt} the discrete counterparts of $\phi_k^\star$ and $v_k$, involving $\frac1N \sum \E_{\theta_k(j)}[\tilde\fX]$ and $\frac1N\sum \Var_{\theta_k(j)}(\tilde\fX)$, resp.
To this end, we proceed with a more convenient representation 
of the law $\mu_{N,K}$ of interest, as the conditioning on non-crossing, 
for a weighted product measure $\Q_{\fh}$.

\begin{definition}\label{def:Qh}
For independent random variables $\tilde \fX_k(j) \sim \P_{\theta_k(j)}$,  we denote by $\Q_\star(\cdot)$
the joint law of $\underline{\widetilde{\phi}}(\cdot):=(\widetilde{\phi}_1(\cdot),\ldots,\widetilde{\phi}_K(\cdot))$,
where 
\begin{equation}\label{eq:tilde-phi-def}
    \widetilde \phi_k(i) = \sum_{j=-N}^{i-1} \big[ \tilde \fX_k(j) - \sgn(j) \big],    \qquad i \in [-N,N] \,,  \;\;
    1 \le k \le K \,,  
\end{equation} 
using $\E_\star(\cdot)$ for the associated expectation. 
%(in particular $\widetilde{\phi}_k(-N)=0$).  starting at  
Let $\Q_\fh$ be the 
probability measure whose density \abbr{wrt} $\Q_\star$ is 
proportional to $e^{-\fh \cW^-} e^{-\fh \cW^+}$, 
where
\begin{align}\label{eq:tilted}
\cW^- &:= \sum_{k=1}^K \frac{\lambda^{k-1}}{N} 
\sum_{t=-N}^{a_k^-} \big(\widetilde \phi_k(t) \vee 0\big)
- \sum_{k=1}^K \frac{\lambda^{k-1}}{N} \{a_k\} \big(\widetilde \phi_k (a_k^-) \vee 0\big)
, \\
\cW^+ &:= \sum_{k=1}^K \frac{\lambda^{k-1}}{N}
\sum_{t=a_k^+}^N (\widetilde \phi_k(t) \vee 0)
- \sum_{k=1}^K \frac{\lambda^{k-1}}{N} \{a_k\} \big(\widetilde \phi_k (a_k^+) \vee 0\big)
\,.
\label{eq:tilted-p}
\end{align}
\end{definition}

\begin{lemma}
    \label{lem:tilting}
The law $\mu_{N,K}$ of \cref{eq:1d-sos-many-curves-phi} coincides 
with $\Q_\fh (\cdot \mid B)$ of \cref{def:Qh},   for the 
non-crossing event $B$ of \cref{eq:1d-sos-domain-many-phi}.  That is, 
\begin{align}\label{def:B}
B:=\Big\{ \underline{\widetilde \phi} (N)=\underline 0, \quad 
\widetilde \phi_1(t) \ge \widetilde \phi_2(t)
\ge \cdots \ge \widetilde \phi_{K}(t) \ge 0, \quad \forall t \in (-N,N) \Big\}\,.  
\end{align}
\end{lemma}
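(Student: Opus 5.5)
The plan is a direct computation of the Radon--Nikodym density of $\Q_\fh$ restricted to $B$, compared against \cref{eq:1d-sos-many-curves-phi}. Each path $\underline{\widetilde\phi}$ is in bijection with its increments $\tilde\fX_k(j)=\widetilde\phi_k(j+1)-\widetilde\phi_k(j)+\sgn(j)$. Under $\Q_\star$, the probability of a given configuration is, by \cref{eq:P-theta},
\[
\prod_{k=1}^K\prod_{j=-N}^{N-1} c_\beta\, e^{-\beta|\tilde\fX_k(j)|+\theta_k(j)\tilde\fX_k(j)-\Lambda(\theta_k(j))},
\qquad c_\beta=\tfrac{1-e^{-\beta}}{1+e^{-\beta}}.
\]
The factors $c_\beta e^{-\Lambda(\theta_k(j))}$ are deterministic and get absorbed into the normalization. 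The term $e^{-\beta|\tilde\fX_k(j)|}$ is exactly the gradient weight $\beta|\phi_k(t+1)-\phi_k(t)+\sgn(t)|$ in \cref{eq:1d-sos-many-curves-phi}. So it remains to show that on $B$,
\[
\sum_j\theta_k(j)\tilde\fX_k(j)-\fh(\cW^-+\cW^+)\big|_{k\text{-part}} = -\frac{\fh\lambda^{k-1}}{N}\sum_{t}\widetilde\phi_k(t)+\text{const}.
\]
Here "const" may depend on $N,k$ but not on the path.

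The key step is a summation by parts. Write $\tilde\fX_k(j)=\Delta\widetilde\phi_k(j)+\sgn(j)$. The $\sgn(j)$ contribution is deterministic. Using $\widetilde\phi_k(-N)=0$ and, on $B$, $\widetilde\phi_k(N)=0$, we get
\[
\sum_{j=-N}^{N-1}\theta_k(j)\Delta\widetilde\phi_k(j)=-\sum_{t=-N+1}^{N-1}\widetilde\phi_k(t)\big[\theta_k(t)-\theta_k(t-1)\big].
\]
By \cref{eq:theta_k-def}, $\theta_k(t)-\theta_k(t-1)=\theta_\star/(N\alpha_k)=\fh\lambda^{k-1}/N$ (by \cref{eq:alpha-k-def}) whenever both arguments lie in the linear regime. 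The increment vanishes when both are saturated at $\pm\theta_\star$. Thus in the bulk $[a_k^-,a_k^+]$ the linear tilt produces exactly the area penalty $-\frac{\fh\lambda^{k-1}}{N}\widetilde\phi_k(t)$.

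Outside the bulk, the area term must instead come from $e^{-\fh\cW^\pm}$. On $B$ we have $\widetilde\phi_k\ge0$, so $\widetilde\phi_k\vee0=\widetilde\phi_k$, and $\cW^\pm$ supply $\frac{\lambda^{k-1}}N\widetilde\phi_k(t)$ for $t\le a_k^-$ and $t\ge a_k^+$.

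The main obstacle is the careful bookkeeping at the two transition points $t=a_k^\pm$, where the increment of $\theta_k$ is only fractional. The clipping at $\pm1$ in \cref{eq:theta_k-def}, with the shift $j+0.5$, gives an increment there equal to a fraction of $\theta_\star/(N\alpha_k)$ determined by $\{a_k\}$ from \cref{def:akpm}. Meanwhile the sums in $\cW^\pm$ include the endpoint $a_k^\pm$ with full weight. I will check that the fractional increment plus the full-weight endpoint term in $\cW^\pm$, minus the correction $\{a_k\}\widetilde\phi_k(a_k^\pm)$, together give total coefficient exactly $\frac{\fh\lambda^{k-1}}N$ at $t=a_k^\pm$. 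This accounts for the precise form of the correction terms in \cref{eq:tilted,eq:tilted-p}. I will do the check by explicitly computing $\theta_k(a_k^+)-\theta_k(a_k^+-1)$ and $\theta_k(a_k^-)-\theta_k(a_k^--1)$, which is a short case analysis, and handle the endpoints $t=\pm N$ using $\widetilde\phi_k(\pm N)=0$.

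Once the weights agree pointwise on $B$, the conditioning event $B$ coincides with the support constraints \cref{eq:1d-sos-domain-many-phi}. Normalizing then gives $\Q_\fh(\cdot\mid B)=\mu_{N,K}$.
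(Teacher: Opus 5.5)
Your proposal is correct and is essentially the paper's argument. Both compute the exponential-tilt density explicitly and then sum by parts, and in both the corrections $\{a_k\}\widetilde\phi_k(a_k^\pm)$ in $\cW^\pm$ exactly compensate the fractional jumps $\theta_k(a_k^\pm)-\theta_k(a_k^\pm-1)=\frac{\fh\lambda^{k-1}}{N}\{a_k\}$. The only difference is bookkeeping: the paper first rewrites the total area as $\sum_j \frac{\fh\lambda^{k-1}}{N}\, j\,\fX_k(j)$, changes measure, and then sums by parts on $[-N,a_k^-]$ and $[a_k^+,N]$, whereas you sum by parts once against the increments of $\theta_k$, which makes the origin of the $\{a_k\}$ terms more transparent.
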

\begin{proof} The law $\mu_{N,K}$ of \cref{eq:1d-sos-many-curves} matches 
that of $\varphi_k(i)=\sum_{j=-N}^{i-1} \fX_k(j)$, for \abbr{iid} $\{\fX_k(j)\}$
of marginal law $\P_0$ of \cref{eq:rw-law},
restricted to non-crossing functions in a wedge (see \cref{eq:1d-sos-domain-many}),
and using the geometric area tilt weight
\begin{align}
\exp\Big(-\frac{\fh} N \sum_{k=1}^K \lambda^{k-1} \sum_{t=-N}^N \varphi_k(t) \Big) &= 
 \exp\Big(-\frac{\fh} N\sum_{k=1}^K\lambda^{k-1} \sum_{j=-N}^{N-1} (N-j)\fX_k(j)\Big)
\nonumber \\
&= \exp\Big(\sum_{k=1}^K \sum_{j=-N}^{N-1} \frac{\fh \lambda^{k-1}}{N} j \fX_k(j) \Big) \,,
\label{dfn:W-phi}
\end{align}
where the last identity in \cref{dfn:W-phi}
uses that by \cref{eq:1d-sos-domain-many-phi},
\begin{equation}\label{eq:ident-wedge}
\phi_k(N) = \varphi_k(N)=\sum_{j=-N}^{N-1} \fX_k(j) = 0 \,, \qquad \forall k \,.
\end{equation}
In view of \cref{eq:theta_k-def,def:akpm}, the Radon--Nikodym derivative for the change 
from marginal law $\P_0$ of the \abbr{iid} $\fX_k(j)$ to the marginal laws $\P_{\theta_k(j)}$ of the independent $\tilde \fX_k(j)$, transforms the weight \cref{dfn:W-phi}
to be of the form $e^{-\fh \widetilde{\cW}^-} e^{-\fh \widetilde{\cW}^+}$, where
\[
\widetilde{\cW}^- :=  \sum_{k=1}^K \frac{\lambda^{k-1}}{N}
\sum_{j=-N}^{a_k^- -1}  (-a_k - j) \tilde \fX_k(j)\,,\qquad
\widetilde{\cW}^+ := \sum_{k=1}^K \frac{\lambda^{k-1}}{N}
 \sum_{j=a_k^+}^{N-1} (a_k - 1 - j) \tilde \fX_k(j) \,.
\]
We now apply the summation by parts formula, whereby for any $x,y,a$ and $\psi$,
\[ \sum_{j=x}^{y-1}(\psi(j+1)-\psi(j))(a-j)=\psi(y)(a-y+1)+\psi(x)(x-a)+\sum_{j=x+1}^{y-1}\psi(j)\,;
\]
Combining this with the \abbr{rhs} of \cref{eq:ident-wedge},  which holds also for $\tilde \fX_k(j)$, 
we deduce that in terms of $\widetilde \varphi_k(i):= \widetilde \phi_k(i)+ (|i|-N)$, 
\begin{align*}
 \sum_{j=-N}^{a_k^- -1}  (-a_k - j) \tilde \fX_k(j)&=
\sum_{i=-N}^{a_k^-} \widetilde \varphi_k(i) + \widetilde \varphi_k(a_k^-) (a_k^+-a_k)
\,, \\
\sum_{j=a_k^+}^{N-1} (a_k- 1 - j) \tilde \fX_k(j)
& = \sum_{i=a_k^+}^N \widetilde \varphi_k(i) + \widetilde \varphi_k(a_k^+) (a_k^+-a_k)
\,.
\end{align*}
Substituting these identities in the expressions for $\widetilde{\cW}^\pm$ results in 
$\widetilde{\cW}^\pm=\cW^\pm$,  
thereby concluding the proof of the lemma.
\end{proof}
We proceed to show that for large $N$, the conditioning on $B$ of \cref{def:B}
can be relaxed to conditioning on $A \supset B$ whose structure 
assists us later in decoupling the relevant time intervals we
consider in \cref{thm:1}.
\begin{definition}\label{def:Irb}
Fix $\epsilon \in (0,\tfrac{1}{9})$, $\Delta=N^{2/3+\epsilon}$, $a_0^+=N+\Delta$ 
and set
\begin{align*}
% I_{\pm r}&:=\pm [a^+_{r} ,a^+_{r-1}-\Delta)\qquad\qquad (1 \le r \le K)\,,\\
\fI_{\pm r} &:=\pm [a^+_{r}- \Delta,a^+_{r-1}-\Delta)\qquad(1 \le r \le K)\,,\\
% I_{K+1} =
\fI_{K+1} &:= \;\;\, (a^-_K+\Delta,a^+_K-\Delta) \,.
\end{align*}    
Consider the event 
\begin{equation}\label{def:A}
A := \{ \underline{\widetilde \phi} (N)=\underline 0\} \cap A^- \cap A^+ \,,
\end{equation}
where
\begin{equation}
\label{def:Apm}
A^\pm := \big\{\widetilde \phi_{r} \ge \cdots \ge \widetilde \phi_{K} \ge 0
\quad \mbox{on} \quad \fI_{\pm r}, \quad \mbox{for} \quad 1 \le r \le K\big\} \,.
\end{equation}
Note that 
$\{\fI_r\}$ 
partitions $(-N,N)$ into $2K+1$ sub-intervals,
as depicted in \cref{fig:tilde-phi-events-A-B}.
\end{definition}

Indeed,  thanks to our next result (which is proved in \cref{sec:decup}),  
we can replace
$\mu_{N,K}(\cdot)
=\Q_{\fh}(\cdot\mid B)
$
by $\Q_{\fh} (\cdot \mid A)$,  
for the event $A$ of \cref{def:A}. 
\begin{proposition}\label{lem:non-crossing-BB}
For $\fh > \theta_\star$, $\lambda>1$ and $A$ of \cref{def:A},  
\begin{equation}\label{eq:non-crossing}
 \lim_{N \to \infty} \| \mu_{N,K}(\cdot) - \Q_\fh(\cdot\mid A) \|_\tv = 0 \,.
\end{equation}
\end{proposition}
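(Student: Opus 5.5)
Since $B\subset A$ and $\mu_{N,K}=\Q_\fh(\cdot\mid B)$ by \cref{lem:tilting}, the elementary identity
\[
\|\Q_\fh(\cdot\mid B)-\Q_\fh(\cdot\mid A)\|_\tv = 1-\Q_\fh(B\mid A)=\Q_\fh(A\setminus B\mid A)
\]
reduces \cref{eq:non-crossing} to showing $\Q_\fh(A\setminus B\mid A)\to0$. The event $A\setminus B$ means that some ordering constraint $\widetilde\phi_k\ge\widetilde\phi_{k+1}$ (with $\widetilde\phi_{K+1}\equiv0$) fails at a time in $\fI_{\pm r}$ with $r>k$, or in $\fI_{K+1}$. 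On such intervals $|t|<a_k^+-\Delta$, so we are at distance at least $\Delta=N^{2/3+\epsilon}$ inside the region $(a_k^-,a_k^+)$ where $\widetilde\phi_k$ is untilted by area. I would take a union bound over $k$ and over the side (by time-reversal symmetry it suffices to treat the left side together with the middle interval). The aim is to show that, under $\Q_\fh(\cdot\mid A)$, curve $k$ is of height $\gg N^{1/3+\epsilon}$ there, while curve $k+1$ is at most comparable to what it is under its own drift.

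\textbf{Step 1: height of the separated curve.} Under $\Q_\star$, on $[a_k^-,a_k^+]$ the increments of $\widetilde\phi_k$ have mean $\Lambda'(\theta_k(j))-\sgn(j)$. This mean is $\approx c(j-a_k^-)/N$ with $c>0$ near $a_k^-$ and is symmetric near $a_k^+$, so the mean profile at $a_k^-+s$ is $\asymp s^2/N$, i.e.\ at least $N^{1/3+2\epsilon}$ for $s\ge\Delta$. Fluctuations are only $O(\sqrt s)$ by \cref{eq:var-bd}, so for $N^{2/3}\ll s$ the mean dominates. The entrance value $\widetilde\phi_k(a_k^-)$ is at most $N^{1/3+\epsilon/2}$ w.h.p.\ by the area-tilted height bound for the pieces $\Phi^\pm$ (the analogue of \cref{prop:N-third-almost-tight}, obtained from the stochastic ordering and single-curve estimates in the \cref{sec:prelim} style). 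Together with a Doob/maximal inequality applied in dyadic blocks $s\in[2^m\Delta,2^{m+1}\Delta]$, these give
\[
\widetilde\phi_k(a_k^-+s)\ge c' s^2/N\quad\text{for all } s\in[\Delta,a_k^+-a_k^--\Delta],
\]
with probability $1-o(1)$; the sum of Gaussian tails $e^{-c s^3/N^2}$ over the blocks is $o(1)$. Near the right end the same holds by symmetry. The endpoint constraint $\widetilde\phi(N)=0$ is handled by a local CLT: conditioning on the endpoint only costs a bounded density factor on events depending on the bulk increments.

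\textbf{Step 2: the curve below.} Curve $k+1$ has $\alpha_{k+1}<\alpha_k$. On $\fI_{\pm r}$ with $r\ge k+1$ it is in its own area-tilted region and lies below $N^{1/3+\epsilon/2}$ w.h.p. On the interval where it is itself separated, its profile is $\asymp (\alpha_{k+1}$-scaled$)$ and strictly below curve $k$'s profile, by a macroscopic gap once away from the $\Delta$-windows, since $\phi^\star_{k+1}<\phi^\star_k$ on $(-\alpha_k,\alpha_k)$. Near $\pm\alpha_{k+1}N$ the comparison is $N^{1/3+\epsilon/2}$ versus $\Delta^2/N$ again.

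\textbf{Main obstacle.} These estimates hold under $\Q_\fh$, but we need them under $\Q_\fh(\cdot\mid A)$, and $\Q_\fh(A)$ may be polynomially small. I would first handle this with monotonicity (FKG-type stochastic ordering of \cref{sec:prelim}). Conditioning on $A$, an increasing event in each curve's lower constraints, pushes curve $k$ up, so the lower bound of Step 1 survives. For the upper bound on curve $k+1$, compare with the ensemble of curves $k+1,\dots,K$ with curve $k$ removed: conditioning on ordering only below lowers it relative to the unconditioned version with the floor constraint, and the one-curve area-tilted bound controls that. The delicate point is making the ordering argument valid given the partial interval-wise constraints in $A$. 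If this fails, the fallback is a lower bound $\Q_\fh(A)\ge N^{-C}$ obtained from ballot-type estimates on each interval, combined with tail bounds of order $e^{-N^{c\epsilon}}$ for the bad events, which absorbs the polynomial cost.
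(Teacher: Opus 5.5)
Your reduction to $\Q_\fh(A\setminus B\mid A)\to 0$ is correct. The lower bound on curve $k$ can also plausibly be done by monotonicity, although $A$ is not an increasing event. Given everything else, $\widetilde\phi_k$ on $[a_k^-,a_k^+]$ has no ceiling, no tilt, nonnegative endpoints and only the floor $\widetilde\phi_{k+1}$ near the two ends, so by \cref{lem:Qh-monotonicity} it dominates a free bridge from $0$ to $0$.

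\textbf{The gap} is in transferring \emph{upper} bounds on the lower curve $\widetilde\phi_{k+1}$ to the conditioned law $\Q_\fh(\cdot\mid A)$. You need these on its flat part, near $a_{k+1}^\pm$, and in its separated region. The height bounds you invoke (\cref{lem:Ofer-A4}, \cref{prop:N-third-almost-tight}) concern the left piece with a \emph{free} right end. Under $\Q_\fh(\cdot\mid A)$, by contrast, every curve is pinned at both $\pm N$ and runs through its separated region. \cref{lem:Qh-monotonicity} compares only pinned with pinned or free with free boundary data. So even after deleting ceilings and inducting from the bottom, you are left with a single curve pinned at $\pm N$ whose values near $a_{k+1}^\pm$ you cannot yet control. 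Controlling them means comparing left and right partition functions across the middle stretch, and your sketch does not do this.

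Your fallback fails outright, because $\Q_\fh(A)$ is not polynomially small. Under $\Q_\fh$ the curves are independent and the tilt sees only $\widetilde\phi_k\vee 0$, so they can sit below $0$ at polynomial cost. The event $A$ instead forces all curves to stay nonnegative on $(-N,a_1^-]$, a stretch of length $(1-\alpha_1)N$ under tilt $\fh/N$. That costs $e^{-cN^{1/3}}$ (the confinement cost, cf.\ \cref{eq:confine}). Hence $\Q_\fh(A)\le N^{C}e^{-cN^{1/3}}$, and bad-event tails of order $e^{-N^{c\epsilon}}$ cannot absorb it.

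\textbf{The missing idea} is the decomposition of \cref{lem:Q-A}:
\[
\Q_\fh(\cdot\mid A)=\widecheck\Q_\fh(\cdot\mid\Gamma).
\]
Here the left and right pieces are normalized separately as $\widehat\Q^{(\pm)}_\fh$, so the stretched-exponential confinement cost disappears into their normalizing constants, and the middle piece has the free law $\Q_\star$. Only the junction-matching event $\Gamma$ of \cref{eq:w-y-pm} has to be paid for. By \cref{prop:v1-pm}, the junction values $\underline y^{(\pm)}$ concentrate within $N^{\epsilon}\sqrt\Delta=o(\sqrt N)$ of their common $\Q_\star$-mean. Together with the local \abbr{clt} of \cref{lem:lclt}, this gives $\widecheck\Q_\fh(\Gamma)\ge cN^{-K/2}$, hence \cref{eq:amir1}.

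After that, a union bound over the $O(KN)$ pairs $(r,i)$ suffices, provided each bad event has probability at most $N^{-(K+1)}$ under the product measure. This is why the paper needs polynomially sharp height bounds rather than the $1-o(1)$ statements of your Step~1. It obtains them from sub-Gamma tails, \cref{prop:N-third-almost-tight} and \cref{prop:v1-pm}, treating three ranges of $i$ separately.
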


\begin{figure}
    \begin{tikzpicture}
    \begin{scope}[scale=0.95]
    \filldraw [blue,opacity=0.1] (-6,0)--(-2.25,0)--(-2.25,6)--(-6,6)--cycle;
    \filldraw [blue,opacity=0.1] (6,0)--(2.25,0)--(2.25,6)--(6,6)--cycle;
    \filldraw [green,opacity=0.1] (-2.25,0)--(2.25,0)--(2.25,6)--(-2.25,6)--cycle;
    %\draw [line width=3pt,green,opacity=0.5] (or)--(apD2);

    \draw[ultra thick, black] (-6,0)--(6,0);
    \node[circle,scale=0.4,fill=gray,label={[label distance=1pt]below:{\small$a_1^-$}}] (am1) at (-4,0) {};
    \node[circle,scale=0.4,fill=gray,label={[label distance=1pt]below:{\small$a_1^+$}}] (ap1) at (4,0){};
    \node[circle,scale=0.4,fill=gray,label={[label distance=1pt]below:{\small$a_2^-$}}] (am2) at (-2.75,0){};
    \node[circle,scale=0.4,fill=gray,label={[label distance=1pt]below:{\small$a_2^+$}}] (ap2) at (2.75,0){};
    \node[circle,scale=0.4,fill=gray,label={[label distance=1pt]below:{\small$a_3^-$}}] (am3) at (-1.5,0){};
    \node[circle,scale=0.4,fill=gray,label={[label distance=1pt]below:{\small$a_3^+$}}] (ap3) at (1.5,0){};

    \node[circle,scale=0.4,fill=gray] (amD2) at (-2.25,0){};
    \node[circle,scale=0.4,fill=gray] (apD2) at (2.25,0){};
    \node[label={\tiny$\Delta$}] () at (-2.5,-0.52){};
    \node[label={\tiny$\Delta$}] () at (2.5,-0.52){};
    \node[label={\small$\widetilde\phi_1$}] () at (-0.7,4.5){};
    \node[label={\small$\widetilde\phi_2$}] () at (-0.6,2.4){};
    \node[label={\small$\widetilde\phi_3$}] () at (-0.5,.75){};
    %\node[label={\tiny$\Delta$}] () at (2.5,-0.52){};

    \node[circle,scale=0.4,fill=gray,label={[label distance=-1pt]left:$0$},label={[label distance=3pt]below:{\small$-N$}}] (ol) at (-6,0) {};
    \node[circle,scale=0.4,fill=gray,label={[label distance=-1pt] right:$0$},label={[label distance=3pt]below:{\small$N$}}] (or) at (6,0) {};
    \draw[line width=1pt, black] (am1) to[bend right=20] (0,6);
    \draw[line width=1pt, black] (am2) to[bend right=25] (0,4);
    \draw[line width=1pt, black] (am3) to[bend right=30] (0,2);
    \draw[line width=1pt, black] (ap1) to[bend left=20] (0,6);
    \draw[line width=1pt, black] (ap2) to[bend left=25] (0,4);
    \draw[line width=1pt, black] (ap3) to[bend left=30] (0,2);
    \end{scope}
    \end{tikzpicture}
    \caption{Illustration of the setting of \cref{def:Irb}. The curves $\{\widetilde\phi_k\}$ are concentrated (after rescaling) about $\{\phi^\star_k\}$.  The event $B$ says, as per \cref{lem:tilting}, that $\widetilde\phi_k(i)\geq \widetilde\phi_{k+1}(i)\geq 0\; \forall i$, starting and ending at $0$. The event~$A$ relaxes this condition to $|i|> a_k^+ - \Delta$, which for $\widetilde\phi_2$ is the blue shaded region.}
    \label{fig:tilde-phi-events-A-B}
\end{figure}

In analyzing $\Q_\fh(\cdot \mid A)$,  it is useful to represent this conditional 
measure via the following product measure $\widecheck\Q_{\fh}(\cdot)$.
\begin{definition}\label{def:QI}
Let $\widecheck\Phi:=(\Phi^-,\Phi^o,\Phi^+)$,  for 
\begin{align}\label{dfn:Phim}
\Phi^- &:= \{\widecheck{\phi}^-_r(i), \qquad\qquad  \qquad \qquad \; i \in [-N,a_r^-+\Delta], \qquad \;\; 1 \le r \le K\} \,, \\
\Phi^o &:= \{ \widecheck{\phi}^o_r(i)
-\widecheck{\phi}^o_r(a_r^-+\Delta)
, \qquad i \in [a_r^-+\Delta,a_r^+-\Delta], \quad 1 \le r \le K \} \,,\label{dfn:Phis}
\\
\Phi^+ &:= \{ \widecheck{\phi}^+_r(i),\qquad \qquad \qquad \qquad
i \in [a_r^+ - \Delta,N], \qquad \quad  1 \le r \le K\} \,.
\label{dfn:Phip}
\end{align}
Let $\Q_{\fh}^{(-)}$ be the sub-probability measure on $\Phi^-$ 
whose Radon--Nikodym derivative 
\abbr{wrt} the $\Phi^-$-marginal 
of $\Q_\star$
is $e^{-\fh\cW^-} \one_{A^-}$ of \cref{eq:tilted,def:Apm},  denoting by 
$\widehat{\Q}_{\fh}^{(-)}$ the corresponding normalized probability measure.
We similarly denote by $\widehat{\Q}_{\fh}^{(+)}$
the law of $\{\Phi^-(-\,\cdot)\}$, and define the product measure
\[
\widecheck \Q_{\fh}(\widecheck\Phi):=\widehat{\Q}_{\fh}^{(-)}(\Phi^-)\widehat{\Q}_{\fh}^{(+)}(\Phi^+)\Q_\star(\Phi^o) \,.
\]
\end{definition}
\begin{remark}[Gibbs--Markov property]
\label{def-Gibbs}
We note that  $\widehat{\Q}_{\fh}^{(-)}$  can be written in the form
$$  \widehat{\Q}_{\fh}^{(-)}(\Phi^-)=\frac{1}{Z}\prod_{i=-N}^{a_K^-+\Delta-1} 
F_i\big(\underline{\widecheck{\phi}}^-(i),\underline{\widecheck{\phi}}^-(i+1)\big) 
$$
for appropriate functions $F_i$ and a normalizing constant $Z$.  As such, it satisfies the Gibbs--Markov property: for  $I=(i_L,i_R)$,  any $-N \le i_L \le i_R \le a_K^-+\Delta$,  we have that
$$  \widehat{\Q}_{\fh}^{(-)}(\Phi^-_I \mid \Phi^-_{I^c})=\frac{1}{Z'} \prod_{i=i_L}^{i_R-1} 
F_i\big(\underline{\widecheck{\phi}}^-(i),\underline{\widecheck{\phi}}^-(i+1)\big) \, 
\one_{E} \,,
$$
where $E$ is the event that $\Phi^-_{I^c}$ agrees with $\underline{\widecheck{\phi}}^-(i)$
at $i=i_L$ and $i=i_R$.  The same applies to  $\widehat{\Q}_{\fh}^{(+)}$ and (trivially) to $\Q_\star$.
\end{remark}
\begin{definition}\label{def:QIBis}
We denote by $\Q_{\fh}^I$ the probability measure induced by $\widecheck \Q_\fh$  on the collection of paths  $\Phi=\{\underline{\widetilde{\phi}}(i), |i| \le N\}$ through the transformation
\begin{equation}
\label{eq-270726a}
\widetilde \phi_r(i):= \left\{\begin{array}{ll}
 \widecheck \phi_r^-(i), & i \in [-N,a_r^-+\Delta],\\
\widecheck \phi^-_r(a_r^-+\Delta)+\widecheck \phi^o_r(i)-\widecheck\phi^o_r(a_r^-+\Delta),  & i \in [a_r^-+\Delta,a_r^+-\Delta),\\
\widecheck \phi^+_r(i), &i \in [a_r^+ - \Delta,N].
\end{array}
\right.
\end{equation}
\end{definition}
In words, the probability measure  $\Q_{\fh}^I$ is obtained by concatenating, for each $r$, three trajectories. The first collection of trajectories 
$\{ \widetilde \phi _r(i)\}_{i\in [-N, a_r^-+\Delta], r=1,\ldots,K}$ is obtained by starting with independent  random walks whose increments have the law of $\tilde{\fX}+1$ under $\P_{-\theta_\star}$ until $a_r^-$, continuing with independent increments of law as in  \cref{eq:tilde-phi-def} until $a_r^-+\Delta$, 
tilting the laws according to area tilt in $[-N,a_r^-]$, and imposing the ordering and positivity constraints \cref{eq:1d-sos-domain-many-phi}.  Similarly, the pieces 
$\{ \widetilde \phi _r(i)\}_{i\in [a_r^+-\Delta,N], r=1,\ldots,K}$ have the law of the time  and space reversal of the first piece. The middle piece consists of partial sums of independent increments as in 
 \cref{eq:tilde-phi-def},  where scaling time and space by $N$,  their mean effectively 
matches on $(-\alpha_r,\alpha_r)$ the strictly positive function $\phi_r^\star$
(see \cref{eq:phi-star-equiv}).  

The transformation \eqref{eq-270726a} does not use 
%the increments 
$\{ \widecheck{\phi}^o_r(a_r^+-\Delta) -\widecheck{\phi}^o_r(a_r^+-\Delta-1) \}$ 
from $\Phi^o$,  which are needed in order to determine the event $\Gamma$ of \cref{eq:w-y-pm}.
Thus,  while $\widecheck{\Q}_{\fh}(\Phi)=\Q_{\fh}^I(\Phi)$,  the quantity $\Q_{\fh}^I(\Gamma)$ 
is not well defined and in the next lemma we must use $\widecheck{\Q}_{\fh}$.
\begin{lemma}\label{lem:Q-A}
We have the identity
 \begin{equation}
\label{def:Gamma}
\Q_{\fh}(\Phi \mid A)= \widecheck \Q_{\fh}(\Phi \mid \Gamma)\,,\end{equation}
with the event
\begin{equation}
\label{eq:w-y-pm}
\Gamma=\{\underline{y}^{(+)}=\underline{y}^{(-)}+\underline{y}^{(o)}\}\,,
\end{equation}
where the $\Z_+^K$-valued $\underline{y}^{(\pm)} \subset \Phi^\pm$ and the $\Z^K$-valued $\underline{y}^{(o)}\subset \Phi^o$ are given by 
\begin{align}\label{eq:y-def} 
%\underline{y}^{(+)} - \underline{y}^{(-)} = 
y^{(o)}_r&:=
\widecheck{\phi}_r^o(a_r^+-\Delta)-\widecheck{\phi}^o_r(a_r^-+\Delta),\nonumber\\
y^{(+)}_r&:= \widetilde{\phi}_r(a_r^+-\Delta)\,, \quad 
y^{(-)}_r:=\widetilde{\phi}_r(a_r^-+\Delta)\,, \quad 1 \le r \le K\,. 
\end{align}
\iffalse
That is, for any $\Phi$ satisfying \cref{eq:w-y-pm} we have that 
\begin{equation}\label{def:Gamma-old}
\Q_{\fh}(\Phi \mid A) = \frac{1}{\Gamma} \Q^I_{\fh}(\Phi) \quad 
\hbox{where} \quad 
\Gamma := \Q_{\fh}^I(\underline{w}=\underline{y}^{(+)}-\underline{y}^{(-)})\,.
\end{equation}
\fi
\end{lemma}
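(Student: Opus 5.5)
The identity \cref{def:Gamma} should follow by writing both sides as densities with respect to the product law $\Q_\star$ of the independent increments $\{\tilde\fX_r(j)\}$, after splitting those increments, for each $r$, into three consecutive blocks. Throughout, I read $\widecheck\Q_{\fh}(\cdot\mid\Gamma)$ as the conditional law of the path assembled from $\widecheck\Phi$. The plan has three steps.

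\textbf{Step 1: three independent blocks of increments.} For each $1\le r\le K$, split the increments into
\[
J_r^-:=[-N,a_r^-+\Delta)\,,\qquad J_r^o:=[a_r^-+\Delta,a_r^+-\Delta)\,,\qquad J_r^+:=[a_r^+-\Delta,N)\,.
\]
Under $\Q_\star$ these $3K$ blocks are independent.
\begin{itemize}
\item[(i)] The block $J_r^-$ determines $\widetilde\phi_r$ on $[-N,a_r^-+\Delta]$, via the forward partial sums \cref{eq:tilde-phi-def}.
\item[(ii)] The block $J_r^o$ determines the increments of $\widetilde\phi_r$ on $[a_r^-+\Delta,a_r^+-\Delta]$, hence $\Phi^o$ and $y^{(o)}_r$.
\item[(iii)] The block $J_r^+$ determines the backward sums $-\sum_{j=i}^{N-1}[\tilde\fX_r(j)-\sgn(j)]$ for $i\in[a_r^+-\Delta,N]$. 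On the event $\{\widetilde\phi_r(N)=0\}$ these backward sums coincide with $\widetilde\phi_r(i)$; I call them $\widecheck\phi^+_r(i)$.
\end{itemize}
With this notation, $\{\underline{\widetilde\phi}(N)=\underline 0\}$ is exactly the event that the value reached by concatenating the $J^-$ and $J^o$ blocks equals the value $\widecheck\phi^+_r(a_r^+-\Delta)=y^{(+)}_r$ from the $J^+$ block. That is,
\[
\{\underline{\widetilde\phi}(N)=\underline 0\}=\{y^{(-)}_r+y^{(o)}_r=y^{(+)}_r\ \ \forall r\}=\Gamma\,.
\]
On $\Gamma$, the concatenation \cref{eq-270726a} reproduces the original $\widetilde\phi$.

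\textbf{Step 2: measurability of $\cW^\pm$ and $A^\pm$.} I will show that $\cW^-$ and $\one_{A^-}$ are functions of $\Phi^-$ only, and that, on $\Gamma$, $\cW^+$ and $\one_{A^+}$ are functions of $\Phi^+$ only.
\begin{itemize}
\item[(i)] For $\cW^-$: in \cref{eq:tilted} curve $k$ is only evaluated on $[-N,a_k^-]\subset[-N,a_k^-+\Delta]$, so $\cW^-$ depends on $\Phi^-$ alone.
\item[(ii)] For $A^-$: one has $\fI_{-r}=(a_{r-1}^-+\Delta,a_r^-+\Delta]$, and $A^-$ only involves curves $k\ge r$ on $\fI_{-r}$. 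Since $a_k^-\ge a_r^-$ for $k\ge r$, each such curve is evaluated inside its own $\Phi^-$-window $[-N,a_k^-+\Delta]$.
\item[(iii)] The same two checks apply to $\cW^+$ and $A^+$ with the windows $[a_k^+-\Delta,N]$. This includes the boundary terms $\{a_k\}(\widetilde\phi_k(a_k^+)\vee0)$, because $a_k^+\ge a_k^+-\Delta$.
\end{itemize}
Also, on $\Gamma$ we have $\widecheck\phi^+_r(i)=\widetilde\phi_r(i)$, which is why the statements about $\cW^+$ and $A^+$ are made on $\Gamma$.

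\textbf{Step 3: factorize the density and identify the factors.} By Steps 1 and 2, for any configuration $\Phi$,
\[
\Q_\fh(\Phi\mid A)\ \propto\ \Big[e^{-\fh\cW^-}\one_{A^-}\,\Q_\star(\Phi^-)\Big]\,\Big[e^{-\fh\cW^+}\one_{A^+}\,\Q_\star(\Phi^+)\Big]\,\Q_\star(\Phi^o)\,\one_\Gamma\,.
\]
The first bracket is proportional to $\widehat\Q^{(-)}_\fh(\Phi^-)$, by \cref{def:QI}. For the second bracket, I will use the antisymmetry $\theta_k(-1-j)=-\theta_k(j)$, which follows from \cref{eq:theta_k-def}. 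Together with $\P_{-\theta}(\tilde\fX=\ell)=\P_\theta(\tilde\fX=-\ell)$ and $\sgn(-1-j)=-\sgn(j)$, it shows that the $\Q_\star$-law of the backward sums in (iii) is the law of $\{\Phi^-(-\,\cdot)\}$ under $\Q_\star$. The tilt and the constraint transform accordingly: $\cW^+$ is the mirror image of $\cW^-$ (note $a_k^+=-a_k^-$) and $A^+$ is the mirror image of $A^-$. Hence the second bracket is proportional to $\widehat\Q^{(+)}_\fh(\Phi^+)$. Normalizing both sides then gives $\widecheck\Q_\fh(\Phi\mid\Gamma)$.

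The main obstacle is the bookkeeping of Step 3 on the plus side. One has to check that reversing time maps the increment laws $\P_{\theta_k(j)}$ on $[a_k^+-\Delta,N)$ exactly onto those on $[-N,a_k^-+\Delta)$. One also has to check that the correction term $\{a_k\}(\widetilde\phi_k(a_k^\pm)\vee0)$ and the summation ranges in \cref{eq:tilted,eq:tilted-p} are mirror images, including the half-integer shift in \cref{def:akpm}. I would verify these directly from \cref{eq:theta_k-def,def:akpm,eq:tilted,eq:tilted-p}.
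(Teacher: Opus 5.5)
Your proposal is correct and follows the paper's own argument. Like the paper, you factor the $\Q_\star$-density over the three increment blocks, note that $A^\pm$ and $\cW^\pm$ depend only on $\Phi^\pm$, identify $\{\underline{\widetilde\phi}(N)=\underline 0\}$ with the compatibility event $\Gamma$, and match the plus-side factor to $\widehat\Q^{(+)}_\fh$ by time reversal. Your version is more explicit than the paper's: the checks $\theta_k(-1-j)=-\theta_k(j)$ and $\sgn(-1-j)=-\sgn(j)$ are exactly the time-reversal symmetry the paper invokes without verification.
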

\begin{proof} Under $\widecheck{\Q}_{\fh}$, the part $\Phi^+$ starts at 
$\widecheck{\underline\phi}^+(N)=\underline{0}$,
\emph{terminating as specified by $\underline{y}^{(+)}$}.
Thanks to the time-reversal symmetry of the weight 
$\cW^- + \cW^+$, the event $A$ and our tilting $\{\theta_r(j)\}$,
this time-reversed construction of $\Phi^+$ according to the 
weight $e^{-\fh\cW^-} \one_{A^-}$, matches the sub-probability measure 
induced by $\Q_\star$ with weight $e^{-\fh \cW^+} \one_{A^+}$ when 
starting at the positions $\underline{y}^{(+)}$ and 
ending at $\underline{\widetilde \phi}(N)=\underline{0}$. Moreover, with 
$\Q_{\fh}^{(-)}$ considered at a single 
non-random initial condition, its normalizing factor (en-route 
to $\widehat{\Q}_{\fh}^{(-)}$), cancels out in \cref{def:Gamma}. Further, both $A$ and $\cW^-+\cW^+$ 
are determined by $\Phi^\pm$, thus allowing us
to independently construct $\Phi^o$ as in \cref{def:QI}
(thanks to the invariance of $\Q_\star(\cdot)$ to global 
spatial translations). Finally, recall from
\cref{def:A} that $A = A^+ \cap A^- \cap \{\underline{\widetilde{\phi}}(N)=\underline{0}\}$. 
%which is equivalent to \eqref{eq:w-y-pm}.
Having incorporated the constraint $A^- \cap A^+$ already in~$\Q_\fh^I$,
when given $A$ we need only require the compatibility condition $\Gamma$
of \cref{eq:w-y-pm}, thereby establishing \cref{lem:Q-A}.
\end{proof}

\begin{definition}
\label{def-RWGATE}
Given $1 \le r \le K$,  $\Z^{K+1-r}_+$-valued $\underline u,\underline v$ and an interval $I$,  the law of 
a \abbr{rw} \abbr{gate} consisting of $K+1-r$ curves $\{\widetilde \phi_k(t), t\in I\}$,
 $r\leq k\leq K$,
% with left and right boundaries $\underline u,\underline v$, 
 is obtained by random walks starting at $\underline u$ 
 with increments of law $\tilde{\fX}+1$ under $\P_{-\theta_\star}$, 
 area-tilted with parameters $\frac{\fh}{N} \lambda^{k-1}$,  
 conditioned on being ordered,  non-negative,  and ending at $\underline v$.
\end{definition}

\begin{remark}\label{rem:Qhat} 
From the Gibbs--Markov property \cref{def-Gibbs},  for $1 \le r \le K$,  restricting $\widehat{\Q}^{(-)}_{\fh}$ to 
$I_{-r}:=(a^-_{r-1} +\Delta,a^-_{r}]$ conditional on 
\begin{align*}
\underline{u}^{(r)} := \big(\widetilde{\phi}_{r}(a^{-}_{r-1}+\Delta),\ldots,\widetilde{\phi}_{K}(a^-_{r-1}+\Delta)\big)
\;\; \mbox{and} \;\; 
\underline{v}^{(r)} := \big(\widetilde{\phi}_{r}(a^{-}_{r}),\ldots,\widetilde{\phi}_{K}(a^-_{r})\big)
\end{align*}
gives \abbr{rw} \abbr{gate} on $I_{-r}$   
with the specified $\underline{u}^{(r)}$ left boundary 
and $\underline{v}^{(r)}$ right boundary.
\end{remark}

\section{Tools: stochastic ordering and control of an area-tilted rw}\label{sec:prelim}

As in \cite[Sec.~3.2]{HKS25} we denote by $\P_{1,I;g,h}^{b,1;u,v}$ the law of a single $\frac{b}{N}$-area-tilted 
random bridge on time interval $I$,  starting at height $u$ on the left,  ending at height $v$ on the right,  
constrained to be in $[h(x),g(x)]$ at time $x \in I$.  We similarly use $\P_{1,I;g,h}^{b,1;u,\star}$ for free (unspecified) right boundary,  setting $g=\infty$ or $h=-\infty$ if no ceiling/floor.
%  and $b=0$ for random walk/bridge without any tilt.
The stochastic ordering of such laws for $K \ge 1$ ordered area-tilted bridges and certain 
parameters ($g, h, b, u, v$) is provided in \cite[Lem.~3.2]{HKS25}.  We state a more general result,  allowing free boundaries,  general area-tilt parameters and independent but possibly non-identical increments,  omitting its
proof which follows the same coupling argument of \cite{HKS25}.

\begin{lemma}[extension of {\cite[Lem.~3.2]{HKS25}}]
    \label{lem:Qh-monotonicity}
    Consider laws $\Q^\uparrow,\Q^\downarrow$
    on 
    % ensembles of  $K \ge 1$ 
    ordered random walk curves $X_1 \ge \cdots \ge X_K$ on interval $I\subset \Z$, 
    parametrized by curves $g(x) \ge h(x)$ on $I$,  
    boundary values $\underline{u}, \underline{v} \in \Z^K$ and 
    area-tilt $\underline{b}=\{b_i(x),  x \in I,  1 \le i \le K\}$,  such that:
    \begin{itemize}
    \item The ordered curves $X_1(x)\geq \cdots \geq X_K(x)$ for all $x\in I$ have a global ceiling 
    $g(\cdot) \le \infty$ and floor $h(\cdot) \ge - \infty$ (i.e.,  $X_1 \leq g$ and $X_K \geq h$).
    \item The curves have boundary conditions $\underline{u},\underline{v}$, each of which is either ordered (e.g., $\underline{u}=(u_1,\ldots,u_K)$ where $u_i\geq u_{i+1}$) or free.
    \item The law of the increment $\mathfrak{X}\in \Z$ for curve $1\leq i\leq K$ at location $x\in I$ may depend on $i,x$ as long as it is of the form $\P(\fX_{i,x}=k) = \exp(-f_{i,x}(k))$ for a convex function $f_{i,x}$ which is finite at
 least when $k \in \pm 1$.
    \item The area tilt of curve $1\leq i\leq K$ at location $x\in I$ may depend on $i,x$,  of the form $\exp(-b_{i}(x) X_i(x))$.
    \end{itemize} 
    If the parameters of $\Q^\uparrow,\Q^\downarrow$ are such that
    \begin{enumerate}[(a)]
    \item the ceilings and floors are ordered: $g^\uparrow\geq g^\downarrow$ and $h^\uparrow\geq h^\downarrow$ coordinate-wise; \item the boundary conditions are ordered: $\underline{u}^\uparrow\geq \underline{u}^\downarrow$ coordinate-wise or both are free, the same applies to $\underline{v}$;
    \item the area tilts are reversely-ordered:  $b_i^\uparrow(x) \leq b_i^\downarrow(x)$ for all $i,x$;
    \end{enumerate} 
    then there is a coupling of $\Q^\uparrow,\Q^\downarrow$  so that $X_i^\uparrow(x)\geq X_i^\downarrow(x)$ for all $1\leq i\leq K$ and $x\in I$.
\end{lemma}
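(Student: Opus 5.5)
The plan is the standard Glauber-dynamics (heat-bath) monotone coupling of \cite{HKS25}, run on pairs of ensembles. Let $\Omega$ be the finite state space of ordered $K$-tuples of integer paths on $I$ satisfying ceiling/floor and boundary constraints. If $I$ is infinite, or if free boundaries make the paths unbounded, I would first truncate the heights to a large window $[-M,M]$, which makes $\Omega$ finite, and then let $M\to\infty$ at the end, using tightness of the coupled pairs. Let $\Omega^\uparrow,\Omega^\downarrow$ denote the two spaces.

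Each law is then written as a Gibbs measure:
\[
\Q(\underline X)\propto \prod_{i,x}\exp\big(-f_{i,x}(X_i(x+1)-X_i(x))-b_i(x)X_i(x)\big)
\]
on its constrained set. The single-site heat-bath chain picks a uniform pair $(i,x)$, with $x$ an interior site or a free endpoint, and resamples $X_i(x)$ from its conditional law given everything else. That conditional law lives on an interval $[\ell,L]$, where
\[
\ell=\max\big(X_{i+1}(x),h(x)\big),\qquad L=\min\big(X_{i-1}(x),g(x)\big)
\]
(using only the relevant neighbours, with $X_0\equiv\infty$ and $X_{K+1}\equiv-\infty$). Its weight is
\[
w(y)=\exp\big(-f_{i,x-1}(y-X_i(x-1))-f_{i,x}(X_i(x+1)-y)-b_i(x)y\big),
\]
with the obvious modification at a free endpoint, where only one increment term appears.

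The key step is to show that this one-site conditional law is stochastically monotone in every input. It should increase in the neighbouring heights $X_i(x\pm1)$, in the interval endpoints $\ell$ and $L$, and decrease in $b_i(x)$. Monotonicity in $\ell$, $L$ and $b$ is immediate: a restriction to a higher interval, or an exponential tilt $e^{-by}$ with smaller $b$, increases the law in the monotone likelihood ratio sense. For the neighbours, convexity of $f$ gives
\[
\frac{w_{z'}(y+1)}{w_{z'}(y)}\ge \frac{w_{z}(y+1)}{w_{z}(y)}\qquad\text{for } z'\ge z,
\]
since $y\mapsto f(y-z)$ has increments $f(y+1-z)-f(y-z)$ that are nonincreasing in $z$. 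This is a likelihood ratio ordering, which implies stochastic domination. The finiteness of $f$ at $\pm1$ (and, by convexity, at $0$) guarantees that the chain is irreducible on the ordered set $\Omega$: one can move any configuration to a canonical one by unit moves that preserve the order. It also guarantees that the supports of the conditional laws are intervals.

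Next I would couple the two chains. Run them with the same chosen site and the same uniform variable $U$, setting each new value to $F^{-1}(U)$ for the respective conditional distribution function. I would start them from ordered initial states $\underline X^\uparrow_0\ge\underline X^\downarrow_0$, for instance the maximal element of $\Omega^\uparrow$ and the minimal element of $\Omega^\downarrow$, after checking that these exist and are comparable under hypotheses (a) and (b). By induction, if the current states satisfy $\underline X^\uparrow\ge\underline X^\downarrow$ coordinate-wise, then the inputs at the chosen site are ordered. Indeed, the neighbours are ordered, and $\ell^\uparrow\ge\ell^\downarrow$, $L^\uparrow\ge L^\downarrow$ hold by (a) together with the ordering of the other curves. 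Combined with (c), the one-site monotonicity keeps the states ordered after the update. Fixed boundary values enter only as the neighbours at the endpoints, and they are ordered by (b); free endpoints are simply resampled sites, treated identically in both chains. Both chains are irreducible and aperiodic, since holding has positive probability, and reversible with respect to $\Q^\uparrow$ and $\Q^\downarrow$ respectively. Hence the joint law converges to a coupling of the two stationary laws supported on $\{\underline X^\uparrow\ge\underline X^\downarrow\}$, because that set is closed.

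The main obstacle is the bookkeeping at the constraint boundaries. First, I need $\ell\le L$ and a nonempty support at every step of both chains, which holds as long as we stay in the respective $\Omega$. Second, I need the ordered initial states to exist even when $g^\uparrow=\infty$ or $h^\downarrow=-\infty$; this is handled by the truncation window $M$ together with a limiting argument showing that the weak limits of the coupled laws are $\Q^\uparrow$ and $\Q^\downarrow$. Tightness there follows from the exponential tails that convexity of $f$ imposes on the increments, together with $b\ge0$ near the relevant range or a free boundary. The rest follows \cite[Lem.~3.2]{HKS25} verbatim.
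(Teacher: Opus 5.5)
Your proposal is correct and is essentially the paper's argument. The paper likewise realizes $\Q^\uparrow,\Q^\downarrow$ as stationary laws of monotonically coupled Glauber dynamics, as in \cite[App.~A]{HKS25}, and checks the same three ingredients you do: log-concavity of the single-site weights from convexity of $f_{i,x}$, monotonicity in the window $[\ell,L]$, and the log-linear tilt $e^{-b_i(x)y}$, with free endpoints treated as ordinary updated sites. Your extra bookkeeping (height truncation, irreducibility, comparable initial states) fills in details the paper leaves implicit. Note only that the $M\to\infty$ step needs no tail estimate: the truncated law is just $\Q$ conditioned on $\{X_1\le M,\,X_K\ge -M\}$, so it converges in total variation.
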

\begin{proof}
The proof is identical to the one given as in \cite[App.~A]{HKS25}, and in what follows we briefly explain why it extends to the setting given here. In said proof, $\Q^\uparrow,\Q^\downarrow$ were realized as the stationary distributions of two instances of Glauber dynamics coupled via the standard monotone coupling, and the sought result followed from verifying that the monotone coupling is valid, i.e., it maintains the pointwise order $X_i^\uparrow(x)\ge X_i^\downarrow(x)$ for all $1\leq i\leq K$ and $x\in I$. This verification relied on only three properties,
each guaranteed by our hypotheses when evaluating whether to modify $X_i(x)$ by $\{-1,0,1\}$:
\begin{itemize}
\item The convexity of each $f_{i,x}$ makes
the update weights log-concave; since the increment laws may depend on $(i,x)$ but
are always tested for the same pair between the two systems, having non-identical increments changes
nothing. 
\item A log-concave law truncated to the window $[ h(x) \;\vee\; X_{i+1}(x), g(x)\;\wedge\; X_{i-1}(x)]$ is stochastically
nondecreasing in each truncation endpoint.  Therefore,  raising the floor, the ceiling, or the
neighboring curve,  as in hypothesis~(a),  together with the non-crossing constraint coupling
the curves,  raises the conditional law stochastically. 
\item The area tilt enters only through the log-linear factor $e^{-b_i(x) X_i(x)}$ and such a re-weighting
shifts the law stochastically downward as its coefficient grows,  (hence the reverse-ordering in~(c)).  
This argument accommodates arbitrary site-dependent
coefficients $b_i(x)$, in place of the geometric choice $\tfrac{a}{N}b^{i-1}$ of
\cite{HKS25}. 
\end{itemize} 
Consequently, in the generalized setup,  the coupling maintains the pointwise order $X_i^\uparrow(x)\ge X_i^\downarrow(x)$. A free boundary requires no comparison of endpoint
data: the endpoint is nothing but another updated site, so hypothesis~(b)---each pair $(\underline{u}^\uparrow,\underline{u}^\downarrow)$ and $(\underline{v}^\uparrow,\underline{v}^\downarrow)$ can be either
ordered or free---suffices, as in the case of a specified boundary.
\end{proof}

Note that the increments of \cref{eq:tilde-phi-def} are of the form specified above,  as is the partial area-tilt 
in $\widehat{\Q}_\fh^{(-)}$.  We thus utilize \cref{lem:Qh-monotonicity} to compare various laws induced by it,
supplemented by the following upper tail on the height under $\P_{1,I;\infty,0}^{b,1;0,\star}$
at the free right-end of a large interval
(here the increments will be from $\P_{-\theta_\star}(\cdot-1)$ of \cref{eq:P-theta}).
\begin{lemma}\label{lem:Ofer-A4}
Fix $b,\delta,\eta>0$ and consider the area-tilted sequence $\{Y_i,  i \in I=[0,k]\}$ 
with $\Z$-valued \abbr{iid} increments 
% $\{Y_i-Y_{i-1}\}$
from a zero-mean,  log-concave law of 
finite exponential moments and positive probabilities of $\{-1,0,1\}$-valued increments. 
For $C<\infty$,  $\kappa>0$ depending on $b,\delta,\eta$ and any $k \in [\eta N,2N]$,
\[
\P_{1,I;\infty,0}^{b,1;0,\star} \big( Y_k \ge N^{1/3+\delta} \big) \le C e^{- N^{\kappa}} \,. 
\]
\end{lemma}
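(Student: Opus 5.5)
The plan follows the approach of \cite{ISV15}: compare the area-tilted walk with free right end to walks on shorter windows, and use the area penalty to rule out a high endpoint. Write $\P:=\P_{1,I;\infty,0}^{b,1;0,\star}$ and $T:=N^{2/3}$, the natural time scale. Throughout, a height $H$ kept over a time window of length $L$ costs area roughly $HL$, hence weight $e^{-bHL/N}$.

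\emph{Step 1: reduction to a lower bound on the normalization.} Write
\[
\P(Y_k\ge N^{1/3+\delta}) = \frac{\E\big[e^{-\frac bN\sum_{i\le k}Y_i}\,\one\{Y\ge0,\;Y_k\ge N^{1/3+\delta}\}\big]}{\E\big[e^{-\frac bN\sum_{i\le k}Y_i}\,\one\{Y\ge0\}\big]}\,,
\]
with expectations over the untilted walk. For the denominator, force the walk to stay in $[0,N^{1/3}]$ throughout $[0,k]$. By a tube estimate (Mogulskii-type small-ball bound for log-concave increments, e.g.\ via the KMT coupling), this event has probability at least $e^{-c\,k/N^{2/3}}\ge e^{-c'N^{1/3}}$. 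On it the area weight is at least $e^{-bkN^{1/3}/N}\ge e^{-2bN^{1/3}}$, so the denominator is at least $e^{-CN^{1/3}}$.

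\emph{Step 2: upper bound on the numerator.} On $\{Y_k\ge H\}$ with $H:=N^{1/3+\delta}$, let $\tau$ be the last time before $k$ at which $Y\le H/2$. Over $[\tau,k]$ the walk stays above $H/2$. There are two cases.
\begin{itemize}
\item If $k-\tau\ge N^{2/3-\delta/2}$, the area weight over this window is at most $e^{-bHN^{2/3-\delta/2}/(2N)}=e^{-(b/2)N^{\delta/2}\cdot N^{1/3}}$.
\item If $k-\tau< N^{2/3-\delta/2}$, the walk rises by $H/2$ in fewer than $N^{2/3-\delta/2}$ steps. Since the increments have exponential moments, a moderate-deviation bound makes this cost at most
\[
\exp\!\big(-c\,H^2/N^{2/3-\delta/2}\big)=\exp\!\big(-cN^{5\delta/2}\big)
\]
when $H$ is at most of order $N^{2/3-\delta/2}$. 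For larger $H$ it costs $\exp(-cH)$, which is much smaller still.
\end{itemize}
Summing over the at most $2N$ choices of $\tau$ gives, in both cases, a bound on the numerator of the form $e^{-c\min(N^{1/3+\delta/2},\,N^{5\delta/2+\ldots})}$.

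\emph{Main obstacle.} Comparing Step 2 against the denominator $e^{-CN^{1/3}}$ is not sharp enough as it stands, since $N^{5\delta/2}$ need not exceed $N^{1/3}$. The remedy is to localize. Using the Markov property at the last time $\sigma\le k-T$ before the endpoint window, and then \cref{lem:Qh-monotonicity} (raising the starting value only raises the law), reduce to a walk on a window of length of order $N^{2/3}$ starting from height $Y_\sigma$. This should be done via a bootstrap: first show that $Y_\sigma\le N^{1/3+\delta/2}$ fails with stretched-exponential probability, and iterate this over dyadic scales, so that the competing normalization on the short window is only $e^{-O(1)}$ relative to its free counterpart. On that short window, with starting height at most $N^{1/3+\delta/2}$, the comparison is now favorable. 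Reaching height $H$ costs $e^{-cH^2/T}=e^{-cN^{2\delta}}$ by the Gaussian tail, while the Step~1 tube argument restricted to this window has cost only $O(1)$ at the scale $N^{1/3}$ in height and $T$ in time, since a tube of width $N^{1/3}$ over time $T$ costs $e^{-O(1)}$. The interplay between the free right end and the floor is handled as in \cite{ISV15}, by conditioning on $Y_\sigma$ and using the same monotonicity to replace the unknown start by the worst case. Making this bootstrap uniform in $k\in[\eta N,2N]$ is the delicate step, and it yields $\kappa$ of order $\delta$ and $C$ depending on $b,\delta,\eta$.
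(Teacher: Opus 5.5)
Your Step~1 is correct and is the paper's lower bound \cref{eq:confine}. Step~2 has an arithmetic slip. On a window of length $N^{2/3-\delta/2}$ spent above $H/2$, the area weight is $e^{-\frac{b}{N}\cdot\frac{H}{2}\cdot N^{2/3-\delta/2}}=e^{-\frac{b}{2}N^{\delta/2}}$, not $e^{-\frac b2 N^{1/3+\delta/2}}$. Even the best cutoff $L\asymp\sqrt{HN}$ gives only $e^{-cN^{3\delta/2}}$, so this last-exit comparison with $e^{-CN^{1/3}}$ can work only for $\delta>2/9$.

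The genuine gap is the remedy. Your short-window argument is fine once you know that $Y_{k-T}\le N^{1/3+\delta/2}$ off a stretched-exponentially small event under the length-$k$ free-end measure. But that input is the lemma itself, with a smaller exponent, at a point only $N^{2/3}$ before the free end. Its marginal is the length-$(k-T)$ free-end law reweighted by the partition function of the final window, so it is no easier. Iterating over dyadic scales of $\delta$ therefore runs the wrong way: each step presupposes a sharper form of what it proves. There is also no base case, since \cref{lem:Qh-monotonicity} transfers a bound on a starting height but never produces one.

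Running the iteration downward from a crude a priori bound (e.g.\ $\max_i Y_i\le \ell N^{2/3}$) does not help either. Pulling the walk down from height $y\gg N^{1/3}$ needs windows of length about $\sqrt{Ny}\gg N^{2/3}$. There the floor is felt, and the confinement factors $e^{\pm cL/N^{2/3}}$ of numerator and denominator no longer cancel. That is exactly the obstacle you flagged and then deferred to \cite{ISV15}.

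What is missing is a mechanism showing that the law of the free endpoint relaxes, within time $\eta N$, to a law with exponential tail at scale $N^{1/3}$, without any a priori control of interior heights. The paper obtains this from a ground state:
\begin{enumerate}[(i)]
\item It truncates at $M=\lceil \ell N^{2/3}\rceil$, at relative cost $e^{-2c^\star N^{1/3}}$.
\item It takes the Perron--Frobenius left eigenvector $\psi$ of the sub-stochastic kernel of \cref{dfn:hat-T}. This $\psi$ is invariant for the endpoint law, i.e.\ $w_k^\psi=\psi$.
\item The tail bound $\psi(v)\le Ce^{-bv/N^{1/3}}$ follows from $\rho_N^{N^{2/3}}\ge c_2$ and a column-sum bound on $T^{N^{2/3}}$. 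The latter comes from time reversal and the exact exponential-tilt computation using $\Lambda'(-\theta_\star)=-1$.
\item It couples the $h$-transformed chains started from $\delta_1$ and from $\psi$, using few high $N^{2/3}$-blocks, many \texttt{good} blocks, and a uniform meeting probability from \cite[Prop.~6]{ISV15}. This yields $\|w_k^{\delta_1}-\psi\|_\tv\le e^{-c''N^{1/3}}$ for $k\ge \eta N$.
\end{enumerate}
Stationarity of $\psi$ is precisely what replaces the base case your bootstrap lacks.
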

\begin{proof} For convenience we 
shift space up by one,
denoting by $\E^u$ the expected value starting at $Y_0=u \in \N$.  
Unraveling the definition of $\P_{1,I;\infty,0}^{b,1;0,\star}$ 
we merely need to show that for  $c'=c'(b,\eta)>0$ and all large $N$,  
\begin{equation}\label{eq:ratio}
\frac{\E^1\big[\one_{\{Y_k\ge x N^{1/3}\}}   \one_{\cA_k}  e^{-\cW_k} \big]}
{\E^1\big[\one_{\cA_k} e^{-\cW_k} \big]} \le e^{-c' x} + e^{-c' N^{1/3}} \,,
\end{equation}
uniformly over $k \in [\eta N,2N]$ and $(\log N)^2 \le x \le N^{1/3}$,  where
\[ 
\cA_k:=\{Y_i \ge 1 \quad \forall\, 1 \le i \le k\},   \qquad \cW_k := \frac bN \sum_{i=1}^k Y_i \,.
\]
Indeed,  from \cite[(3.3)]{ISV15},  see also \cite[Prop.~4.1]{HKS25},  for some 
$c^\star=c^\star(b,\eta)$ finite 
\begin{equation}\label{eq:confine}
\E^1\big[\one_{\cA_k} e^{-\cW_k} \big] \ge (c^\star)^{-1} e^{-c^\star N^{1/3}} \,.
% = k (N/b)^{-2/3} 
\end{equation}
Now,  by \cref{lem:Qh-monotonicity} we increase the probability of 
$\cC^c_k :=\Big\{\max_{i \le k} \{Y_i\} > M \Big\}$ upon taking $b=0$. 
Consequently,  for $M :=\lceil \ell N^{2/3} \rceil$ and some $\ell$ finite,  
we get by standard random-walk tail bounds that for $k,N$ as above,  
\[
\frac{\E^1 [\one_{\cA_k \cap \cC^c_k} e^{-\cW_k} ]}{\E^1[\one_{\cA_k} e^{-\cW_k}]}   
\le \frac{\P^1 (\cA_k \cap \cC^c_k)}{\P^1(\cA_k)}
\le e^{-2 c^\star N^{1/3}}  \,.
\]
Thus,  \cref{eq:confine} holds true with $\cC_k \cap \cA_k$ instead of $\cA_k$ and setting the sub-probability 
transition 
% kernel 
\begin{equation}\label{dfn:hat-T}
T(u,v):= \P_{-\theta_\star}(v-u-1)  e^{-\frac bN v} \quad \hbox{on} \quad u,v \in \Omega_N := \{1,\ldots,M\} \,,
\end{equation}
it suffices for \cref{eq:ratio} to show that
\begin{equation}\label{eq:ratio-transfer}
\frac{\sum_{y\ge xN^{1/3}} T^{k}(1,y)}{\sum_{y} T^{k}(1,y)} =
\frac{\E^1\big[\one_{\{Y_k\ge x N^{1/3}\}}   \one_{\cA_k \cap \cC_k}  e^{-\cW_k} \big]}
{\E^1\big[\one_{\cA_k \cap \cC_k} e^{-\cW_k} \big]} 
\le e^{-c' x}  \,.
\end{equation}
More generally,  we associate with any probability measure $\nu$ on $\Omega_N$ the laws
\begin{equation}\label{dfn:wk-nu}
w_k^{\nu}(y) := \frac{(\nu T^k)(y)}{\sum_{y'} (\nu T^k)(y')}\,,
\end{equation}
of $Y_k$ under the length-$k$ confined area-tilted measure started from $Y_0 \sim \nu$.
In particular,  the \abbr{lhs} of  \cref{eq:ratio-transfer} is precisely $w_k^{\delta_1}([xN^{1/3},M])$.

While the operator $T$ of non-negative entries is not self-adjoint,  
it is irreducible and aperiodic,  hence having a Perron--Frobenius (maximal) eigenvalue $\rho_N$,
with an associated left 
% Perron-Frobenius 
eigenvector $\psi$ of strictly positive entries.  Normalizing $\psi$ to be a probability measure on $\Omega_N$
results in $w_k^\psi=\psi$ and we establish \eqref{eq:ratio-transfer} upon showing that:\\
{\bf I.}  For some $c_1>0$,  all $N$,  $k$ and $x$ as above, 
\begin{equation}
\label{eq-230726a}
 w_k^{\psi}([xN^{1/3},M]) = \sum_{y=xN^{1/3}}^M \psi(y) \leq e^{-c_1 x} \,.
\end{equation}
{\bf II.}  Adapting a coupling argument from \cite[Section~3.4]{ISV15} yields for some $c''=c''(\eta)>0$, 
\begin{equation}
\label{eq-230726b}
\|w_k^{\delta_1}-w_k^{\psi}\|_\tv\leq e^{-c'' N^{1/3}} \,,   \qquad \forall k \in [\eta N,2N] \,.
\end{equation}
{\bf I.}  Towards \cref{eq-230726a},
we first show that for all $N$,
\begin{equation}
\label{eq-230726c}
\rho_N^{N^{2/3}}\geq c_2 > 0 \,.
\end{equation}
To see \cref{eq-230726c},  denote by $P^x$ the sub-probability law on $\Omega_N$ at  $b=0$,
starting at $x \in \Omega_N$.  Note that for 
$\mathcal Q_j =\bigcap_{i \le j} \{ N^{1/3} \le Y_i \le 5N^{1/3} \}$ and any $j \ge 1$, \begin{align}
\label{eq-230726d}
\rho_N^j= \rho_N^j \sum_y \psi(y)=\sum_{x,y} \psi(x) T^j(x,y) &\geq \psi(3N^{1/3}) \sum_y T^j(3N^{1/3},y)\nonumber\\
&\geq \psi(3N^{1/3}) P^{3N^{1/3}} (\mathcal Q_j) e^{-cjN^{-2/3}}
\end{align}
(with the exponential term bounding the maximal area tilt under $\mathcal Q_j$).
Further,  setting $I:=[2N^{1/3}, 4N^{1/3}]$,  by the invariance principle we have 
for some $p>0$ and all $N$, 
\[
\min_{x\in I}P^x(\max_{i\leq N^{2/3}}|Y_i-3N^{1/3}|<2 N^{1/3}, Y_{N^{2/3}}\in I)\geq p\,.
\]
Hence,  by the Markov property $P^{3N^{1/3}}(\mathcal Q_{\ell N^{2/3}})\geq p^{\ell}$,  so by
\cref{eq-230726d} at $j=\ell N^{2/3}$, 
\[
(\rho_N)^{\ell  N^{2/3}} \geq (pe^{-c})^\ell \psi(3N^{1/3}) \,.
\]
Taking the $\ell$-th root and $\ell \to\infty$,  yields \cref{eq-230726c} with $c_2=pe^{-c}$.

Next,  expressing 
$\{Y_{N^{2/3}-i}\}$ via  
the $\Z$-valued \abbr{rw} $\{\widetilde Y_i\}$ having the 
increment law of $Y_{i-1}-Y_i$  (namely, 
$-(\tilde{\fX}+1)$ under $\P_{-\theta_\star}$), 
yields the identity
\[
\sum_{u=1}^M T^{N^{2/3}} (u,v) = \E^v\Big[\prod_{i=1}^{N^{2/3}} \one_{\Omega_N}(\widetilde Y_i)
\exp(-\frac bN\sum_{i=0}^{N^{2/3}-1} \widetilde Y_{i}) \Big] \,.
\]
We thus have that for some $C_1,C_2$ finite,  all $N$ and any $v \in \Omega_N$,
\begin{align}
 e^{b v/N^{1/3}} & \sum_{u=1}^M  T^{N^{2/3}} (u,v)  \leq 
 \E^v \Big[\exp\big( -\frac{b}N \sum_{i=1}^{N^{2/3}-1} (\widetilde Y_{i}- v) \big) \Big] 
 = \E^0 \big[\exp ( -\frac{b}N \sum_{i=1}^{N^{2/3}-1} \widetilde Y_{i}  ) \big] \nonumber \\
 &=
 %\E^0 \Big[\exp(\cW_{N^{2/3}-1})\Big] =
 \exp\big( \sum_{j=1}^{N^{2/3}-1} 
\big\{ \Lambda(\frac{b j}{N}-\theta_\star) - \Lambda(-\theta_\star) + \frac{bj}{N} \big\} \big) \le 
\exp \big(C_1 \sum_{j=1}^{N^{2/3}} \frac{j^2}{N^2} \big) \le C_2 
\label{eq:mass-decay}
\end{align}
(since $\Lambda'(-\theta_\star)=-1$ and $\Lambda''(-\theta_\star)$ is finite).  With $\|\psi\|_\infty \le 1$, 
by \cref{eq-230726c} and \cref{eq:mass-decay},
\begin{equation}
\label{eq-230726e}
\psi(v)=\rho_N^{-N^{2/3}} \sum_{u=1}^M \psi(u) \,  T^{N^{2/3}}(u,v) \le \frac{1}{c_2} \sum_{u=1}^M 
T^{N^{2/3}}(u,v) \leq \frac{C_2}{c_2} e^{-b v/N^{1/3}}\,.
\end{equation}
Summing this bound over $v\ge xN^{1/3}$ establishes \cref{eq-230726a},  where for $x\ge(\log N)^2$
the polynomial prefactor has been readily absorbed by taking $c_1<b$.

\noindent{\bf II.}
We follow the coupling
argument of \cite[Section~3.4]{ISV15}.\footnote{Specifically,  we adapt \cite[Lem.~3 and Prop.~6]{ISV15}, whose
proofs use only the irreducibility, zero mean and finite exponential moments
of the increments.}
%their Prop.~5, which concerns
%bridge measures and the stationary ground-state chain---our \cref{eq:tv-bound}
%substitutes for it.}. 
First,  recall from \cref{dfn:hat-T} that $T(u,u)>0$,  hence also $T^\ell(u,u)>0$ for all $\ell \ge 1$.  
Let $h_k(u)=1$ and 
%we will represent $\bP^{\nu}$ as a Markov chain. Indeed, 
set for $j=0,\ldots,k-1$,  the positive vectors and Markov transition probabilities
\begin{equation}\label{dfn:pj}
h_j(u) :=\sum_{y\in\Omega_N} T^{k-j}(u,y)\,,  \qquad \pi_j(u,v):=\frac{1}{h_j(u)} T(u,v) \,h_{j+1}(v) \,.
\end{equation}
We further associate with each probability measure $\nu$ on $\Omega_N$ the initial law
\begin{equation}\label{dfn:nu-hat}
\hat \nu(y)= \frac{\nu(y) h_0(y)}{\sum_{u} \nu(u) h_0(u)} = \frac{\nu(y) h_0(y)}{\sum_{y'} (\nu T^k)(y')} \,,
\end{equation}
denoting by $\bP^\nu$ the law of the inhomogeneous Markov chain $\{Y_0,\ldots,Y_k\}$
of transition kernels $\{\pi_j\}$ and initial law $\hat \nu$.  In particular,  $\hat \nu=\nu$
when $\nu=\delta_1$,   whereas if $\nu=\psi$ then $\hat \nu=\rho_N^{-k} \psi (\cdot) h_0(\cdot)$.
It is also easy to verify that $\bP^\nu$ matches the law of $\{Y_i,  i \le k\}$ under the
area-tilting weight $\one_{\cA_k \cap \cC_k} e^{-\cW_k}$,  starting at $Y_0 \sim \nu$,  hence 
in view of \eqref{dfn:wk-nu},  
\[
w_k^\nu(y) = \bP^\nu(Y_k=y) \,,  \qquad \forall y \in \Omega_N\,.
\]
In particular,  with $\bQ$ the coupling where chains $(Y_i)\sim\bP^{\delta_1}$ and $(Y'_i)\sim\bP^{\psi}$
run independently until the stopping time $\tau:=\min\{i:Y_i=Y'_i\}$ and thereafter follow the same path,  
we have by definition of $\|\cdot\|_{\tv}$,  that
\begin{equation}\label{eq:coupling-ineq}
\big\|w_k^{\delta_1}-w_k^{\psi}\big\|_\tv \leq
\bQ(\tau > k)\,.
\end{equation}

Proceeding to establish  \cref{eq-230726b} by bounding the \abbr{rhs} of 
\eqref{eq:coupling-ineq},  
for any $0 \le s < t \le k$,  
\begin{equation}\label{eq:bP-bridge} \bP^\nu\big(Y_i = y_i\ \forall s < i < t\;\big|\; Y_i = y_i\ \forall i \in [0,s]\cup [t,k] \big) \propto \prod_{i=s+1}^{t} T(y_{i-1},y_i)\,,
\end{equation}
which in particular is independent of $\nu$ and the $h_j$'s.  This confined area-tilted \abbr{rw} bridge is the
measure to which the estimates of \cite{ISV15} are applied in the sequel.
To this end,  partition $[0,k]$ into the intervals
\[ I_j := \llb (j-1)N^{2/3}, jN^{2/3}\rrb \quad\mbox{for}\quad j = 1,\ldots, m:=\lfloor kN^{-2/3}\rfloor\ge\frac\eta2
N^{1/3}\]
plus a remainder final interval, and group them into $\lfloor m/3\rfloor$ triples
$(I_{3j},I_{3j+1},I_{3j+2})$. Fix $\varrho=\varrho(b,\eta)<\infty$,  to
be chosen later, and consider the following definition of the exceptional intervals $I_j$ where the \emph{minimum} of $Y_i$ (resp., $Y'_i$) exceeds $\varrho N^{1/3}$:
\begin{equation}\label{eq:high-intervals} \cB^{\delta_1} := \Big\{j\,:\; \min_{i\in I_j} Y_i > \varrho N^{1/3}\Big \}\,,\qquad
\cB^{\psi} := \Big\{j\,:\; \min_{i\in I_j} Y'_i > \varrho N^{1/3}\Big \}\,.
\end{equation}
We claim that, for both $\nu=\delta_1$ and $\nu=\psi$, if $\varrho$ is taken large enough then
\begin{equation}\label{eq:few-high}
\bP^{\nu} \left(|\cB^\nu| \geq m/48\right)  \leq e^{-N^{1/3}}\,.
\end{equation}
To see this,  recall that for any event $E$, 
\begin{equation}\label{eq:dfn-Pnu-E}
\bP^\nu(E)=\frac{\sum_{u}\nu(u) \, \E^u\!\left[\one_{E\cap \cA_k\cap\cC_k} e^{-\cW_k}\right]}
{\sum_{u}\nu(u) \, \E^u\!\left[\one_{\cA_k\cap\cC_k} e^{-\cW_k}\right]}\,,
\end{equation}
yielding our claim \eqref{eq:few-high},  once we show that the denominator in \eqref{eq:dfn-Pnu-E}
is at least $e^{-C' N^{1/3}}$ for some $C'(b,\eta)<\infty$,  
since on $E=\{ |\cB^\nu| \geq m/48\}$ one has that 
\[
\cW_k\ge \frac bN \cdot \frac
m{48}\,N^{2/3}\,\varrho N^{1/3}\ge \frac{b\varrho\eta}{96} N^{1/3}\,.
\]
For the required lower bound on $\E^1\!\left[\one_{\cA_k\cap\cC_k} e^{-\cW_k}\right]$ 
see \cref{eq:confine} (intersected with $\cC_k$),  
while for $\nu=\psi$ the denominator in \eqref{eq:dfn-Pnu-E} is 
$\sum_{v} (\psi T^k) (v) = \rho_N^k \ge c_2^{2N^{1/3}}$ by \cref{eq-230726c}.

Consider a triple $(I_{3j},I_{3j+1},I_{3j+2})$ whose left and right intervals do not belong to the aforementioned exceptional sets of intervals:
\begin{equation}\label{eq:promising-triple} 3j\notin \cB^{\delta_1}\cup\cB^{\psi}\,,\qquad 3j+2\notin \cB^{\delta_1}\cup\cB^{\psi}\,,\end{equation}
and call its middle interval $I_{3j+1}$ \texttt{good} (\`a la the $\eta$-\texttt{good} intervals of
\cite[Sec.~3.4]{ISV15}) if
\begin{itemize}
\item both $(Y_i)$ and $(Y'_i)$ lie in $[1,2\varrho N^{1/3}]$ throughout $I_{3j+1}$;
\item both $(Y_i)$ and $(Y'_i)$ are at most $\varrho N^{1/3}$ at the two endpoints of $I_{3j+1}$. 
\end{itemize}
We argue that there exists $\kappa_0(b,\varrho)>0$ such that, for any triple satisfying \cref{eq:promising-triple},
\begin{equation}\label{eq:good-interval}
\big(\bP^{\delta_1}\otimes\bP^{\psi}\big)\left(I_{3j+1} \mbox{ is \texttt{good}}\right) \geq \kappa_0^2\,.
\end{equation}
It suffices to show that the path $(Y_i)$ satisfies the above two properties with probability at least $\kappa_0$; the same exact reasoning will apply to the independent path $(Y'_i)$. 
Let $\ell_j$ be the \emph{first} time in $I_{3j}$ that $Y_i$ is at most 
$\varrho N^{1/3}$, and let $r_j$ be the \emph{last} time in $I_{3j+2}$ that $Y_i$ is at most $\varrho N^{1/3}$
(which are well-defined in view of \cref{eq:promising-triple}).  Conditioning on $(Y_i)$ outside the open window $(\ell_j,r_j)$, and recalling \cref{eq:bP-bridge}, the restriction of $(Y_i)$ to $\ell_j < i < r_j$ is an area-tilted \abbr{rw} bridge on $\Omega_N$ of length $r_j-\ell_j \in [N^{2/3},3N^{2/3}]$ and some 
endpoints $u,v\in [1,\varrho N^{1/3}]$.  We claim that the probability of this bridge staying within $[1,2\varrho
N^{1/3}]$ while being at most $\varrho N^{1/3}$ at both
endpoints of $I_{3j+1}$ is at least $\kappa_0>0$,  uniformly in the window data (namely, 
$\ell_j,r_j,u,v$).  Indeed,  on the confinement event the area tilt collected along the window is at
most $\frac b N\cdot 3N^{2/3}\cdot2\varrho N^{1/3}=6b\varrho$,  so the probability in
question is at least $e^{-6b\varrho}$ times the probability that an untilted ($b=0$) 
bridge conditioned to stay in $[1,M]$ is
confined to $[1,2\varrho N^{1/3}]\subseteq[1,M]$ and is at most
$\varrho N^{1/3}$ at the end points of $I_{3j+1}$.  The latter probability is
bounded from below,  uniformly over the relevant values of $\ell_j,r_j,u,v$,
% the window lengths, endpoint heights and marked times in question, 
by the invariance principle for \abbr{rw} bridges conditioned to stay positive
\cite[Theorem~2.4]{CaravennaChaumont2013}. This establishes \cref{eq:good-interval}. 

The event $|\cB^{\delta_1}| + |\cB^{\psi}| \leq \frac{m}{24}$,  which by \cref{eq:few-high} 
has probability at least $1- 2e^{-N^{1/3}}$,  implies that at least 
$
%\lfloor m/3\rfloor -m/24\geq 
\frac{m}{4}$ triples satisfy \cref{eq:promising-triple},  whence by \eqref{eq:good-interval}
the number of \texttt{good} intervals 
$I_{3j+1}$ stochastically dominates a $\Bin(\lceil m/4\rceil,\kappa_0^2)$ 
% random
variable.  We thus conclude 
%from Chernoff's bound
 that
\begin{equation}
\label{eq:good-count}
\big(\bP^{\delta_1}\otimes\bP^{\psi}\big)\left(\#\{j: I_{3j+1} \mbox{ is \texttt{good}}\} < \tfrac{\kappa_0^2}{8}\,m\right) \leq e^{-\kappa_0^4 m/50}+2e^{-N^{1/3}}\,.
\end{equation}
Next, conditionally on the values of both paths outside the \texttt{good} intervals,  
the interiors of their distinct middle blocks are independent and within each \texttt{good} 
interval the paths form two independent area-tilted length-$N^{2/3}$ bridges, 
with all four endpoint heights in $[1,\varrho N^{1/3}]$,  
each conditioned to stay in $[1,2\varrho
N^{1/3}]$.
%  (the ceiling at $M\ge2\varrho N^{1/3}$ is again automatically satisified). } 
The area tilt collected in such a confined block is at most $2b \varrho$, so 
removing the tilt at the cost of a factor $e^{-2b\varrho}$,  
reduces the meeting probability to that of \cite[Prop.~6]{ISV15}: it is shown there that for any irreducible zero-mean step law with finite exponential moments,  
there exists $p=p(\varrho)>0$ such that
within each \texttt{good} block,  there exists some $i\in I_{3j+1}$ such that $Y_i=Y'_i$ with
conditional probability at least $p$.  Setting  $p_0:=p e^{-2b\varrho}$ and recalling that 
% conditionally independently across the \texttt{good} blocks. 
$m \ge \frac\eta2 N^{1/3}$,  upon combining this with \cref{eq:good-count} we find that for all large $N$, 
\[
\bQ(\tau > k) \leq
(1-p_0)^{\kappa_0^2 m/8}+e^{-\kappa_0^4 m/50}+2e^{-N^{1/3}} \leq e^{-c'' N^{1/3}}\,.
\]
In view of \eqref{eq:coupling-ineq} this proves \cref{eq-230726b} and 
thereby \cref{eq:ratio-transfer} and the lemma.
\end{proof}

\section{Curves near separation and decoupling}\label{sec:separation}
Combining \cref{lem:Qh-monotonicity,lem:Ofer-A4} we control 
various heights under $\widehat{\Q}_\fh^{(-)}$,  as follows.
\begin{proposition}\label{prop:N-third-almost-tight}
For some $C_r=C_r(L,\delta)$ finite, any $N,  L<\infty$ and $\delta>0$: 
\begin{equation}\label{eq:almost-tight-induction}  \widehat\Q_\fh^{(-)}\Big(\sup_{-N\leq x \leq a_r^-} \widetilde\phi_r(x)> (K+1-r) N^{1/3+2\delta}\Big) \le C_r N^{-L}\,, 
\qquad \forall 1 \le r \le K \,.
\end{equation}
\end{proposition}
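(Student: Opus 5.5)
We plan to argue by downward induction on $r$, from $r=K$ to $r=1$. The factor $(K+1-r)$ in the threshold accounts for the error accumulated from the curves below. For each $r$ we will dominate the $r$-th curve on $[-N,a_r^-]$ by a single area-tilted walk. That walk has tilt $\frac{\fh}{N}\lambda^{r-1}$ and increments $\tilde\fX+1$ under $\P_{-\theta_\star}$. Its floor is a shift of the curve below it, and its right end is either free or pinned at a controlled value. We then read off the tail from \cref{lem:Ofer-A4}, after a union bound over the $O(N)$ times $x\in[-N,a_r^-]$.

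\textbf{Step 1: removing the upper curves.} Under $\widehat\Q_\fh^{(-)}$, we condition on the curves $\widetilde\phi_k$ with $k\neq r$. The Gibbs--Markov property (\cref{def-Gibbs}) gives the conditional law of $\widetilde\phi_r$ on $[-N,a_r^-+\Delta]$. It is an area-tilted walk started at $0$, with right end at $a_r^-+\Delta$, ceiling $\widetilde\phi_{r-1}$ on the intervals where $A^-$ imposes it, floor $\widetilde\phi_{r+1}$ (or $0$ when $r=K$), and tilt $\frac{\fh}{N}\lambda^{r-1}$ on $[-N,a_r^-]$.

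By \cref{lem:Qh-monotonicity} we may raise the ceiling to $\infty$ and raise the floor, which only increases $\widetilde\phi_r$ stochastically. We then restrict to $[-N,a_r^-]$ by conditioning on the value at $a_r^-$. Here it is simpler to use the free right end on $[-N,a_r^-]$. The issue is that the segment $(a_r^-,a_r^-+\Delta]$ has no tilt and increments with drift toward $\theta_r(j)$. To handle this we would instead compare on the full $[-N,a_r^-+\Delta]$ with a free right end. That is again a comparison allowed by \cref{lem:Qh-monotonicity}, since the boundary is free in both laws and the area tilt is smaller in the dominating law.

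On the last $\Delta=N^{2/3+\epsilon}$ steps the increments have mean $O(\Delta/N)$ above the $-\theta_\star$ mean. The tilt there is $0$, but restoring a tilt $\frac{\fh}{N}\lambda^{r-1}$ on that stretch costs only a bounded Radon--Nikodym factor, provided heights stay $O(N^{1/3+\delta})$. We therefore expect to reduce to \cref{lem:Ofer-A4}, up to a polynomial factor.

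\textbf{Step 2: the floor.} By the induction hypothesis for $r+1$, with probability $1-C_{r+1}N^{-L}$ we have $\widetilde\phi_{r+1}\le (K-r)N^{1/3+2\delta}$ on $[-N,a_{r+1}^-]\supset[-N,a_r^-]$. On this event we replace the floor $\widetilde\phi_{r+1}$ by the constant $(K-r)N^{1/3+2\delta}$, again via \cref{lem:Qh-monotonicity}. Shifting space down by this constant gives a single walk with floor $0$ and starting point at most $0$. Monotonicity in the starting value then puts the shifted walk below the walk $\P^{b,1;0,\star}_{1,I;\infty,0}$ of \cref{lem:Ofer-A4}, with $b=\fh\lambda^{r-1}$ adjusted by the shift. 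The shift changes the tilt weight only by a constant factor, which cancels upon normalization.

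The event $\{\sup\widetilde\phi_r>(K+1-r)N^{1/3+2\delta}\}$ becomes a height exceedance of $N^{1/3+2\delta}$ for that walk. For each intermediate time $x$ we apply \cref{lem:Ofer-A4} with $k=x+N$ when $k\ge\eta N$. When $k<\eta N$, the walk has free end at $k$, and we first need to check that it is dominated by the walk on the full length.

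\textbf{Main obstacle.} The main difficulty is controlling intermediate times rather than only the endpoint. We would handle it by comparing, via \cref{lem:Qh-monotonicity}, the marginal at time $x$ of the walk on $[0,k]$ with the free-end walk on $[0,x]$. This requires the Gibbs--Markov property: given $Y_x$, the future is independent, and conditioning the past on a free future differs from truncating at $x$ only by an $h$-transform $Y_x\mapsto \E^{Y_x}[\cdot]$. That function is decreasing in $Y_x$, because a higher starting point incurs a larger area cost. Reweighting by it therefore lowers the law of $Y_x$ stochastically, so the truncated walk dominates.

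For small $x$, below $\eta N$, a plain random-walk tail bound with $b=0$ already gives $\P(Y_x\ge N^{1/3+\delta})\le e^{-N^{\kappa}}$ only if $x\le N^{2/3}$. For $x$ between $N^{2/3}$ and $\eta N$ we would instead rerun the proof of \cref{lem:Ofer-A4} with $k\in[N^{2/3+\delta},2N]$. That proof uses only $m=kN^{-2/3}\ge N^{\delta}$ blocks, which yields $e^{-N^{\kappa}}$.
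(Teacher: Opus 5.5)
There is a genuine gap in Step~1, at the untilted segment $(a_r^-,a_r^-+\Delta]$. On that segment the true law has no area tilt and increments of positive mean $\Lambda'(\theta_r(j))+1$. It therefore sits \emph{above}, not below, the uniformly tilted zero-drift walk to which \cref{lem:Ofer-A4} applies. The comparison you invoke runs the wrong way for an upper bound, and \cref{lem:Qh-monotonicity} in any case only compares laws with the same increment distributions.

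Your fallback claim, that restoring the tilt on this stretch costs a bounded Radon--Nikodym factor, is false. The extra weight $\exp(-\frac bN\sum_{x\in(a_r^-,a_r^-+\Delta]}\widetilde\phi_r(x))$ varies with $v=\widetilde\phi_r(a_r^-)$ by factors $e^{\Theta(N^{\epsilon-1/3}v)}$, already $e^{\Theta(N^{\epsilon})}$ at $v\asymp N^{1/3}$. Moreover, the drift carries the curve to heights of order $\Delta^2/N=N^{1/3+2\epsilon}$ there, so the weight is $e^{-\Theta(N^{3\epsilon})}$ on typical paths. This is nowhere near a bounded density.

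The missing idea is to integrate this segment out via the Gibbs--Markov property. The marginal on $[-N,a_r^-]$ is the free-end tilted walk reweighted by $q(v)$, the probability that the untilted segment started at $v$ stays above the floor. This $q$ is increasing, so monotonicity does not help. However, $1\ge q(v)\ge q(0)\ge c\Delta^{-1/2}$ by a persistence bound for the walk with its nonnegative drift removed (the paper uses \cite{DSW18}). Hence this change of measure costs only a factor $O(\sqrt\Delta)$, which is harmless against $N^{-L}$.

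Your treatment of intermediate times has a related flaw. The weight $y\mapsto\E^{y}[\cdots]$ encoding the future of the walk is not decreasing. The floor makes it small for $y$ near $0$, and it grows on the scale $N^{1/3}$ before the area cost makes it decay. So the truncated free-end walk need not dominate the time-$x$ marginal.

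The paper avoids a union bound over times altogether. It bounds the endpoint $\widetilde\phi_r(a_r^-)$ by $N^{\delta}(N/b)^{1/3}$ using \cref{lem:Ofer-A4}, and then controls the maximum of the resulting tilted bridge from $0$ to $v$ by \cite[Cor.~5.2]{HKS25}. Your route could be patched with the same device as above: drop the tilt after $x$ by monotonicity, then bound the resulting increasing weight below by a persistence probability. You would still need an extension of \cref{lem:Ofer-A4} to $k<\eta N$.

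Your Step~2, replacing the floor by the constant $(K-r)N^{1/3+2\delta}$ (raising the starting value along with it) and shifting, matches the paper's induction.
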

\begin{proof}  We start at the bottom-curve $\widetilde\phi_K$,  defined under
$\widehat{\Q}_\fh^{(-)}$,  as in \cref{def:QI}, 
%and \cref{rmk:independce},  
on the union of disjoint intervals $I'_1:=[-N,a_{K-1}^-+\Delta]$,
$I'_2:=(a_{K-1}^-+\Delta,a_K^-]$ and $I_3'=(a_K^-,a_K^-+\Delta]$.
The curve $\widetilde\phi_K$ is conditioned to be non-negative, having 
over $I_1' \cup I_2'$ \abbr{iid} increments of law $\tilde{\fX}+1$ under $\P_{-\theta_\star}$, of zero mean and finite exponential moments,
and an area tilt, with additional ceiling on $I_1'$ by the curve $\widetilde\phi_{K-1}$. The curve $\widetilde\phi_K$ has neither tilt, 
nor ceiling over $I_3'$ where it is the partial sum, as in
\cref{eq:tilde-phi-def}, of independent, positive mean 
increments based on $\tilde{\fX}_{K}(j) \sim \P_{\theta_K(j)}$. 
Setting for $v \in \Z_+$,
\[
q_K(v):=\P(\inf_{j \in I_3'} \{\widetilde\phi_K(j)\} > -1  \mid  \widetilde \phi_K(a_K^-)=v) \,,
\]
note that $q_K(v) \ge q_K(0)$ which is further reduced upon 
subtracting the positive mean from each of the increments
of $\widetilde \phi_K$ in $I_3'$.
%$\tilde{\fX}_{K}(j)-\sgn(j)$ 
Doing so yields $q_K(0)=\P(T_{-1} > \Delta)$ in 
the setting of \cite[Thm.~1]{DSW18},  where 
$B_\Delta$,  in the notation of the latter, is $\Theta(\Delta)$,
since the 
variance of our increments is uniformly, over $j$ and $N$, 
bounded above and below.  Further, 
\cite[(5),(6),(9)]{DSW18} trivially hold (their Lindeberg
condition 
%\cite[(9)]{DSW18} 
is an easy consequence of the uniform exponential tail of our increments),
% uniform 4-th moment suffices here, even less.
whereby $\liminf_{N} \{ \sqrt{\Delta} \; q_K(0) \} > 0$
(since $U_g(\infty)$ of \cite[Prop.~5]{DSW18} is strictly 
positive in our case, of a constant threshold $g_\Delta:=-1$). 

Returning to our proof, note that by the Gibbs--Markov property 
\cref{def-Gibbs},  the marginal law of $\widetilde \phi_K$ restricted to $I_1' \cup I_2'$,  is obtained by replacing in $\widehat{\Q}_\fh^{(-)}$ the contribution from $\widetilde \phi_K$ on $I_3'$, by the factor $q_K(\widetilde\phi_K(a_K^-))$ and normalizing.  Given our $N^{-c}$ uniform lower bound on the factor $q_K(v)$, it thus suffices to establish \cref{eq:almost-tight-induction} at $r=K$ for 
the modified measure $\widetilde{\Q}_{\fh}^{(-)}$ 
where we completely ignore the path $\widetilde \phi_K$ beyond $a_K^-$.
Proceeding hereafter with $\widetilde{\Q}_{\fh}^{(-)}$,  we fix $b:=\fh \lambda^{K-1}$ and
realize the law of $\widetilde \phi_K$ as the expected 
value of the random laws $\P_{1,I_1'\cup I_2';g,0}^{b,1;0,\star}$
for a single $\frac{b}{N}$-area-tilted random walk on $I_1' \cup I_2'$,  from height $0$ on the left, 
% unspecified height on the right, 
with a zero floor and ceiling $g$ (which is set to be infinite on $I_2'$). 
The expectation here is over a sample from $\widetilde{\Q}_{\fh}^{(-)}$,  of $g = \widetilde \phi_{K-1}$
on $I_1'$.
%and conditional on it, sampling also $v=\widetilde \phi_K(a_K^-)$.
On the \abbr{lhs} of \cref{eq:almost-tight-induction} we have
an increasing event,  so by \cref{lem:Qh-monotonicity}  
we can replace the computation to one with $g \equiv \infty$.
Hence,  we get \cref{eq:almost-tight-induction} at $r=K$, 
out of 
\[
\P_{1,I_1'\cup I_2';\infty,0}^{b,1;0,\star} \Big(\sup_{x \in I_1' \cup I_2'} \phi(x)> N^{1/3+2\delta}\Big) \le C_K N^{-L} \,.
\]
Further,  setting $\mathsf{A}=N^{\delta}$, 
\begin{align}
\P_{1,I_1'\cup I_2';\infty,0}^{b,1;0,\star} \Big(\sup_{x \in I_1' \cup I_2'} \phi(x)& >   N^{1/3+2\delta}\Big) \le 
\P_{1,I_1'\cup I_2';\infty,0}^{b,1;0,\star} \Big( \phi(a_K^- )> \mathsf{A} (N/b)^{1/3}  \Big) \nonumber \\
& +
\sup_{v \le \mathsf{A} (N/b)^{1/3}} \P_{1,I_1'\cup I_2';\infty,0}^{b,1;0,v} \Big(\sup_{x \in I_1' \cup I_2'} \phi(x)> N^{1/3+2\delta}\Big) \,.
\label{eq:bd-A+B}
\end{align}
From \cref{lem:Ofer-A4},  the first term on the right side of \cref{eq:bd-A+B}
is at most $C_K N^{-L}$,  whereas by  \cite[Cor.~5.2]{HKS25} with $\mathsf{R}=\mathsf{A}$
and $I=I'_1 \cup I_2' =\Omega(N)$,
%and $H_b=(N/b)^{1/3}$,  
the rightmost term is also bounded by $C_K N^{-L}$.

Next,  under $\widetilde{\Q}_{\fh}^{(-)}$ we can first sample 
$\widetilde{\phi}_K$ on $[-N,a_K^{-}]$ and then sample the remaining ordered curves 
$\{ \widetilde{\phi}_r,  1 \le r \le K-1 \}$  according to  that measure,  now with 
the specified $\widetilde{\phi}_K(x)$ as their floor.  Having established already 
\cref{eq:almost-tight-induction} at $r=K$,  we thus utilize again 
\cref{lem:Qh-monotonicity} to deduce that it suffices to establish this bound 
for $r \le K-1$ when having instead the floor $h:= N^{1/3+2\delta} \le \widetilde{\phi}_{K-1}(x)$
and left boundary values $\widetilde{\phi}_{r}(-N)=h$.  

Having done so and further reducing $\widetilde{\phi}_r$ by $h$ to have a zero floor,  we arrive at 
precisely the original problem of proving \cref{eq:almost-tight-induction},  just with $K-1$ curves 
(instead of $K$ curves).  The proof is thus concluded by repeating the preceding process 
$K$ times.
\end{proof}

The following tail bound on $y^{(\pm)}_r$
% the argument of $\Q_\star(\cdot)$
 in \cref{eq:w-y-pm} is key to 
our proof of the weak convergence of $\widehat\phi_r$ to $\BB_r$. (See \cref{fig:props-at-pm-delta}, which illustrates this bound vs.\ the one from \cref{prop:N-third-almost-tight}.)
\begin{proposition}\label{prop:v1-pm} 
For any fixed $\epsilon\in(0,\frac1{9})$ and $1 \le r \le K$, 
\begin{align}\label{eq:v1-one-sided-upper}
\lim_{N \to \infty} &
N^{K}\, \sum_{i=1}^{\Delta} \,  \widehat{\Q}_\fh^{(-)}\left(\widetilde\phi_r(a_r^- +i) \leq \E_\star[\widetilde\phi_r(a_r^- +i)] - N^{\epsilon}\sqrt\Delta \right) = 0\,,\\
\label{eq:v1-one-sided-lower}
\lim_{N \to \infty} &
N^{K}\, \sum_{i=1}^{\Delta} \,  \widehat{\Q}_\fh^{(-)}\left(\widetilde\phi_r(a_r^- +i) \geq \E_\star[\widetilde\phi_r(a_r^- +i)] +N^{\epsilon}\sqrt\Delta \right) = 0\,.
\end{align}
\end{proposition}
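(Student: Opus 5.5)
The plan is to use the Gibbs--Markov property of \cref{def-Gibbs} to reduce the claim to a statement about curve $r$ alone on $J_r:=(a_r^-,a_r^-+\Delta]$. On $J_r$ that curve is a sum of independent $\P_{\theta_r(j)}$-increments with no area tilt, conditioned only on staying above a floor of height $O(N^{1/3+2\delta})$. The conditioning costs at most a polynomial factor, while the unconditioned walk has stretched-exponential deviation bounds, and together these beat the prefactor $N^K\Delta$.

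First I identify the constraints on $J_r$ under $\widehat\Q_\fh^{(-)}$. By \cref{eq:tilted}, the tilt on curve $r$ only acts on $[-N,a_r^-]$. By \cref{def:Apm}, on $J_r\subset\fI_{-r}$ the ordering involves only $\widetilde\phi_r\ge\cdots\ge\widetilde\phi_K\ge0$, and curve $r-1$ is unconstrained there because $\fI_{-(r-1)}$ ends at $a_{r-1}^-+\Delta<a_r^-$. Hence, conditionally on $v:=\widetilde\phi_r(a_r^-)$ and on $\widetilde\phi_{r+1}$, the path $\{\widetilde\phi_r(a_r^-+i)\}_{i\le\Delta}$ has the $\Q_\star$-law of a walk started at $v$ with independent increments $\tilde\fX_r(j)-\sgn(j)$, conditioned on the event $G$ that it stays $\ge\widetilde\phi_{r+1}$ on $J_r$ (with floor $0$ if $r=K$). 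Since $a_r^-+\Delta<a_{r+1}^-$, \cref{prop:N-third-almost-tight} applied to curve $r+1$ gives $\sup_{J_r}\widetilde\phi_{r+1}\le K N^{1/3+2\delta}$ outside an event of probability $C N^{-L}$. The same proposition for curve $r$ gives $0\le v\le KN^{1/3+2\delta}$ with the same tail. I fix $\delta<\epsilon/2$, so that $v$ is $o(N^\epsilon\sqrt\Delta)=o(N^{1/3+3\epsilon/2})$. Since the increments before $a_r^-$ are centered under $\Q_\star$, we have $\E_\star[\widetilde\phi_r(a_r^-)]=0$, and the target mean equals $v$ plus the deterministic drift sum up to an error $o(N^\epsilon\sqrt\Delta)$.

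Next, I bound the probability of $G$ from below by $N^{-C}$, uniformly over floors bounded by $h:=KN^{1/3+2\delta}$ and starting points $v\ge\widetilde\phi_{r+1}(a_r^-)$. I take the increments with their positive drift subtracted, which only lowers the path. It then suffices that the walk started at $v$ stays above $v-1$ for $m:=\lceil h^2N^{2\delta}\rceil$ steps while ending above $2h$, and thereafter stays above $h$ for the remaining $\le\Delta$ steps. The first part has probability $\gtrsim m^{-1/2}$ by \cite[Thm.~1]{DSW18} combined with the invariance principle for the walk conditioned to stay positive (note $m\ll\Delta$ as $\delta$ is small). The second part has probability $\gtrsim\Delta^{-1/2}$, again by \cite{DSW18}, exactly as for $q_K(0)$ in the proof of \cref{prop:N-third-almost-tight}. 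On the bad events (floor or $v$ too high) I simply pay $CN^{-L}$ with $L$ large.

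Finally, under the unconditioned $\Q_\star$, the uniform exponential moments of the increments (\cref{eq:var-bd} and $|\theta_r(j)|\le\theta_\star<\beta$) give, by a Chernoff bound, for each $i\le\Delta$,
\[
\Q_\star\Big(\Big|\widetilde\phi_r(a_r^-+i)-v-\textstyle\sum_{j}\big(\E_{\theta_r(j)}\tilde\fX-\sgn(j)\big)\Big|\ge\tfrac12N^\epsilon\sqrt\Delta\Big)\le 2e^{-cN^{2\epsilon}} .
\]
Dividing by $\Q_\star(G)\ge N^{-C}$ and summing over $i\le\Delta$, then adding the $CN^{-L}$ exceptional probabilities, bounds both sums in \cref{eq:v1-one-sided-upper} and \cref{eq:v1-one-sided-lower} by $N^{K+C+1}e^{-cN^{2\epsilon}}+CN^{K+1-L}\to0$.

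The main obstacle is the polynomial lower bound on $\Q_\star(G)$ uniformly over the random floor $\widetilde\phi_{r+1}$, which may exceed the starting height $v$ at later times. I would handle it as outlined, by making the walk first climb above $2h$ over a short stretch and then invoking the \cite{DSW18} persistence estimate. If needed, \cref{lem:Qh-monotonicity} can replace the random floor by the constant floor $h$ on the relevant (increasing or decreasing) event.
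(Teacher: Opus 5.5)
There is a genuine gap in your lower bound on $\Q_\star(G)$. You claim $\Q_\star(G)\ge N^{-C}$ uniformly over floors $f$ with $\sup_{J_r} f\le h$ and starting points $v\ge f(a_r^-)$. That claim is false. Take $v=f(a_r^-)=0$ and $f(a_r^-+1)=h$. Then $G$ forces the first increment to be at least $h$, so $\Q_\star(G)\le e^{-ch}$, which is super-polynomially small.

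Your ``climb, then persist'' construction hides this. Requiring the walk to stay above $v-1$ during the first $m$ steps does not keep it above the floor there. You only know $f\le h$, and for $r<K$ the start $v$ can be far below $h$. So the ratio $\Q_\star(E)/\Q_\star(G\mid v,\widetilde\phi_{r+1})$ is not controlled pointwise on your good event. Controlling it on average would require showing that floors with small $\Q_\star(G)$ are rare under $\widehat\Q_\fh^{(-)}$, and your proposal does not address this.

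The missing step is to apply \cref{lem:Qh-monotonicity} \emph{before} dividing, treating the two tails separately. This is what the paper does.
\begin{itemize}
\item For the decreasing event in \cref{eq:v1-one-sided-upper}, lower the start to $\widetilde\phi_r(a_r^-)=0$ and remove the floor altogether ($\widetilde\phi_{r+1}\equiv-\infty$ on $J_r$). No conditioning then remains, and your Chernoff bound alone gives $e^{-cN^{2\epsilon}}$.
\item For the increasing event in \cref{eq:v1-one-sided-lower}, work on the high-probability event where $v$ and $\sup_{J_r}\widetilde\phi_{r+1}$ are at most $h$. There, raise both the floor and the starting value to the constant $h$. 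The walk now starts on its own floor, so $\Q_\star(G)\gtrsim\Delta^{-1/2}$ by \cite{DSW18}, exactly as for $q_K(0)$ in the proof of \cref{prop:N-third-almost-tight}. Your divide-by-$\Q_\star(G)$ step then goes through, and the climbing stretch is unnecessary.
\end{itemize}

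Your closing fallback (replace the random floor by the constant $h$ ``on the relevant increasing or decreasing event'') goes the wrong way for the decreasing event. Raising the floor stochastically raises the path, so it decreases the probability of a downward deviation rather than bounding it from above.
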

\begin{remark}\label{rem:v1-pm} 
Considering $y^{(-)}_r=\widetilde{\phi}_r(a_r^-+\Delta)$ of \cref{eq:y-def},  
we get from \cref{eq:v1-one-sided-upper,eq:v1-one-sided-lower} a tail estimate for 
$\|\underline{y}^{(-)}-\E_\star [\underline{y}^{(-)} ]\|_\infty$.
By symmetry $\E_\star [\underline{y}^{(+)}]=\E_\star[\underline {y}^{(-)}]$ 
for $y^{(+)}_r=\widetilde{\phi}_r(a_r^+-\Delta)$,  with the same tail 
bounds for $\|\underline{y}^{(+)}-\E_\star [\underline{y}^{(+)}]\|_\infty$.  
\end{remark}

\begin{figure}
    \begin{tikzpicture}
    \begin{scope}[scale=1.25]

    \filldraw[color=blue, pattern=north east lines, preaction={fill=blue!15}, pattern color=blue] (-1.95,0) rectangle (0,0.6);

    \filldraw[color=purple, preaction={fill=purple!15}, pattern=north east lines, pattern color=purple] (1.45,0) rectangle (1.55,0.6);

    \filldraw[color=green!50!black, pattern=north east lines, preaction={fill=green!15}, pattern color=green!50!black] (1.45,1.5) rectangle (1.55,2.25);

    \draw[ultra thick, black] (-2,0)--(2,0);
    \node[circle,scale=0.4,fill=gray,label={[label distance=6pt]below:{$a_r^-$}}] (am3) at (0,0){};

    \draw[|-|,gray!50!black] (0.02,-0.2)-- ++(1.5,-0.0);
    \node[color=gray!50!black,font=\small] at (.75,-0.37) {$\Delta$};
    \draw[|-|,gray!50!black] (-0.02,-0.2)-- ++(-1.5,-0.0);
    \node[color=gray!50!black,font=\small] at (-.75,-0.37) {$\Delta$};

    \draw[|-|,gray!50!black] (1.78,0.03)-- ++(0,0.57);
    \node[color=gray!50!black,font=\small] at (2.2,0.35) {$N^{\frac13+\epsilon}$};

    \draw[|-|,gray!50!black] (1.78,1.5)-- ++(0,0.75);
    \node[color=gray!50!black,font=\small] at (2.35,1.95) {$N^{\frac13+\frac32\epsilon}$};

    \draw[line width=1pt, black] (am3) to[bend right=30] (1.5,2);

    \end{scope}
    \end{tikzpicture}
    \caption{Illustration of the bounds  established in \cref{prop:N-third-almost-tight,prop:v1-pm}. The former states that $\widetilde\phi_r(a_r^--\cdot)$ and 
    $\widetilde\phi_{r+1}(a_r^-+\Delta)$ are each at most $ N^{1/3+\epsilon}$ (the intervals in blue and purple). The latter states that $\widetilde\phi_r(a_r^-+\Delta)$ is concentrated about $\E_\star[\widetilde\phi_r(a_r^-+\Delta)]$ up to  $N^{\frac13+\frac32\epsilon}$ (the green interval).}
    \label{fig:props-at-pm-delta}
\end{figure}

\begin{proof}
We begin with the proof of \cref{eq:v1-one-sided-upper} which concerns the probability of a
decreasing event.  In view of \cref{lem:Qh-monotonicity} it can only increase if we 
minimize $\widetilde\phi_r(a_r^-)=0$  at the left end of $[a_r^-,a_r^-+\Delta]$ and  
remove the (random) floor by forcing $\widetilde{\phi}_{r+1}=-\infty$ throughout that interval.
Having done so,  by standard tail bounds on the moment generating function of the sum of $i \le \Delta$ 
independent---albeit not identically distributed---random variables,  of zero mean,  uniformly 
bounded variances and exponential tails,  
% mgf at theta be exp((C/2) \theta^2 i) for theta < c
% Markov bd gives exp((C/2) \theta^2 i - \theta N^\epsilon \sqrt{\Delta})
% optimum at \theta = N^\epsilon \sqrt{\Delta}/(Ci) when i > (c/C) N^{\epsilon} \sqrt{\Delta} 
% giving then exp(-N^{2 \epsilon} \Delta/(2 C i)),  at smaller i use \theta=c
% to get exp(- c' N^\epsilon \sqrt{\Delta})
the probability that such a sum be under $-N^\epsilon\sqrt\Delta $ 
is at most $\exp(-c N^{2\epsilon})$ (here we used the fact that $N^\epsilon \ll \sqrt\Delta$).

Proceeding to prove \cref{eq:v1-one-sided-lower},  in view of \cref{lem:Ofer-A4} (at $\delta=\epsilon$),  
we may assume that
\[
\widetilde\phi_{r}(a_{r}^-) \le N^{1/3+\epsilon} \,,
\]
whereas by \cref{prop:N-third-almost-tight} (at $\delta=\epsilon/2$),  we further assume \abbr{wlog} that
\[ 
\sup_{ x \in [a_r^-, a_r^-+\Delta]} \{ \widetilde\phi_{r+1}(x) \} \leq N^{1/3+\epsilon} \,.
\]
It then follows from \cref{lem:Qh-monotonicity} that the probability of the increasing event 
in \cref{eq:v1-one-sided-lower} cannot exceed its value when replacing the $(r+1)$-th curve by
a non-random floor $\widetilde\phi_{r}(x) \ge N^{1/3+\epsilon}$ throughout $[a_r^-,a_r^-+\Delta]$,  with 
equality at the left-boundary of this interval (i.e.  $\widetilde\phi_{r}(a_{r}^-) = N^{1/3+\epsilon}$).
With $N^\epsilon \sqrt{\Delta} \gg N^{1/3+\epsilon}$,  the same argument that led to \cref{eq:v1-one-sided-upper}
completes the proof (one merely divides by the $N^{-c}$ lower bound on the probability that the
relevant partial sums are non-negative throughout the interval $[a_r^-,a_r^-+\Delta]$,  which we have 
seen while proving \cref{prop:N-third-almost-tight}). 
\end{proof}

\subsection{Decoupling}\label{sec:decup}

We proceed to establish a local \abbr{clt} which will allow us in the sequel 
to decouple as $N \to \infty$ the laws of $(\Phi^-,\Phi^o,\Phi^+)$,  despite the 
conditioning event of \cref{eq:w-y-pm}.

\begin{lemma}\label{lem:lclt} 
For $\underline{y}^{(o)}$ of \cref{eq:y-def}, let 
\[
f_N(\delta) := \sup \Big\{ \frac{\Q_\star(\underline{y}^{(o)} = \underline{u})}{\Q_\star(\underline{y}^{(o)}=\underline{v})} :
\|\underline{u}\|_\infty \vee \|\underline{v}\|_\infty \le \delta \sqrt{N}, \;\; 
\underline{u},\underline{v} \in \Z^K \Big\} \,. 
\]
Then, 
\begin{equation}\label{eq:fNd}
\lim_{\delta \to 0}\limsup_{N \to \infty} f_N(\delta) = 1 \,.
\end{equation}
\end{lemma}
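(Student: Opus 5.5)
Under $\Q_\star$ the $K$ coordinates of $\underline{y}^{(o)}$ are independent. Each $y^{(o)}_r$ is a sum of $n_r:=a_r^+-a_r^--2\Delta\asymp N$ independent integer variables $\tilde\fX_r(j)-\sgn(j)$, over $j\in[a_r^-+\Delta,a_r^+-\Delta)$, with $\tilde\fX_r(j)\sim\P_{\theta_r(j)}$. It therefore suffices to prove the one-dimensional statement for each $r$: with $\mu_r:=\E_\star[y^{(o)}_r]$ and $\sigma_r^2:=\Var_\star(y^{(o)}_r)$, uniformly in $x\in\Z$ with $|x|\le\delta\sqrt N$,
\[
\sqrt{2\pi}\,\sigma_r\,\Q_\star\big(y^{(o)}_r=x\big)=\exp\Big(-\frac{(x-\mu_r)^2}{2\sigma_r^2}\Big)+o(1)\,.
\]
By \cref{eq:var-bd} we have $\sigma_r^2\asymp N$, and by \cref{eq:vk-equiv}, $\sigma_r^2/N\to v_r(\alpha_r)>0$. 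The key observation is that $\mu_r=O(1)$. Indeed, $\theta_r(\cdot)$ is odd about $-1/2$, so $\E_{\theta_r(j)}\tilde\fX=\Lambda'(\theta_r(j))$ is antisymmetric under $j\mapsto -1-j$. The same holds for $\sgn(j)$, and the index range $[a_r^-+\Delta,a_r^+-\Delta)$ is symmetric under this reflection. Hence $\mu_r=0$ exactly, or at worst $O(1)$ after rounding (equivalently, $\phi^\star_r$ has equal values at $\pm(\alpha_r-\Delta/N)$ by \cref{eq:phi-star-equiv}). Given this, for $|u_r|,|v_r|\le\delta\sqrt N$ the Gaussian factors lie in $[e^{-c\delta^2},1]$ with $c=1/(2v_r(\alpha_r))$, and the $o(1)$ errors are negligible relative to them. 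The ratio in $f_N(\delta)$ is then at most $\prod_r e^{c\delta^2}(1+o(1))$, which gives \cref{eq:fNd} after $N\to\infty$ and then $\delta\to0$.

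It remains to establish the local CLT for a triangular array of independent, non-identically distributed lattice variables. I would use the standard Fourier inversion argument:
\[
\Q_\star\big(y^{(o)}_r=x\big)=\frac1{2\pi}\int_{-\pi}^{\pi}e^{-itx}\prod_j\varphi_j(t)\,\d t\,,
\]
where $\varphi_j$ is the characteristic function of $\tilde\fX_r(j)-\sgn(j)$. I would split the integral into three ranges. On $|t|\le A/\sigma_r$, a uniform Taylor expansion gives the Gaussian. This uses that third moments are bounded uniformly in $j$, since $|\theta_r(j)|\le\theta_\star<\beta$ gives uniform exponential moments. On $A/\sigma_r\le|t|\le\eta$, the bound $|\varphi_j(t)|\le e^{-c t^2}$ from the uniform variance lower bound \cref{eq:var-bd} makes the contribution at most $e^{-cA^2}$. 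On $\eta\le|t|\le\pi$, one needs $\sup_j|\varphi_j(t)|\le1-c(\eta)$. This holds because every $\P_\theta$ with $|\theta|\le\theta_\star$ puts mass at least $c>0$ on each of $0$ and $1$, making the law aperiodic. That range then contributes $e^{-c n_r}$.

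The main obstacle is this uniformity in $j$, i.e. non-identical increments; the uniform bounds on $\theta_r(j)$ handle it. The only delicate point is checking $\mu_r=O(1)$ carefully with the half-integer shifts in \cref{eq:theta_k-def,def:akpm}. If exact cancellation failed and gave $\mu_r=O(\Delta^2/N)$, that would still be $o(\sqrt N)$ because $\epsilon<1/9$, so the argument goes through either way.
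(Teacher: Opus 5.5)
Your proposal is correct and follows the paper's proof. Like the paper, you factor over $r$, note that the mean of $y^{(o)}_r$ vanishes exactly by the antisymmetry of $\theta_r(\cdot)$ and $\sgn(\cdot)$ under $j\mapsto -1-j$, and apply a local CLT for independent, non-identically distributed lattice summands with $\sigma_r^2/N\to\alpha_r v_\star(1)>0$; the only difference is that you sketch the Fourier-inversion proof of that local CLT, where the paper cites \cite[Thm.~VII.4]{Petrov}.

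One small slip in your fallback remark: $\Delta^2/N=N^{1/3+2\epsilon}$ is $o(\sqrt N)$ only for $\epsilon<1/12$, not for all $\epsilon<1/9$. This is moot, since the mean is exactly zero.
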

\begin{proof} Recalling that the \abbr{pmf} 
of $\underline{y}^{(o)}$ under $\Q_\star$ is a product of $K$ terms, 
it suffices to fix $1 \le r \le K$ and establish \cref{eq:fNd} for a single term.  Specifically,  setting
$n := 2 (a_r^+-\Delta)$,  by \cref{def:QI} of the law of $\Phi^o$, 
it suffices to consider \cref{eq:fNd} for the function 
\[
f_{N,r}(\delta) := \sup \Big\{ \frac{q_{0,n}(u)}{q_{0,n}(v)} : u, v \in \Z \cap [-\delta \sqrt{N},\delta \sqrt{N}] \Big\} \,, 
\]
where we have set for $0 \le k \le \ell \le n$ and $w + \E[Z_r(\ell)]-\E [Z_r(k)] \in \Z$ the quantities
\begin{align}\label{def:Sri}
Z_r (k) &:= \sum_{j=a_r^-+\Delta}^{a_r^-+\Delta+k-1} \big[ \tilde{\fX}_r(j) - \sgn(j) \big]\,,   \\
q_{k,\ell}(w)& := \P\Big(Z_r(\ell)-Z_r(k) = w+  \E[Z_r(\ell)]- \E[ Z_r(k)]\Big)\,,
\label{def:q-k-ell}
\end{align}
noting that $\E Z_r(n) = \E Z_r(0) = 0$ by the symmetry in our choice of $\theta_r(\cdot)$.
Turning to bound $f_{N,r}(\delta)$,  recall that $Z_r(k)$ are the partial sums of $k$ independent
$\Z$-valued variables,  where prior to their shift by $\sgn(j)$,  the corresponding 
\abbr{pmf}-s $p_{j \ell} := \P(\tilde{\fX}_r(j)=\ell)$,
are maximal at $\ell=0$ and uniformly bounded away from zero at $\ell=1$ (see \cref{eq:P-theta}). 
Further, recall from \cref{eq:var-bd} that 
$\Var(\tilde{\fX}_r(j))=\Var_{\theta_r(j)}(\tilde{\fX})$ are uniformly bounded above 
and away from zero.  The centered absolute third moments of $\tilde{\fX}_r(j)$ 
are similarly uniformly bounded
(see \cref{eq:P-theta,eq:E-theta,eq:theta_k-def}).  Thus,  \cite[Thm.~VII.4]{Petrov} applies for $Z_r(n)$ and
setting
\begin{align}\label{def:vstar-r}
v_\star^{(r)}(\cdot) &:=v_\star(\cdot)-v_\star(-1+\eta_r)\,, \qquad  \qquad 
\eta_r:=\tfrac{\Delta}{N\alpha_r} \,, \\
% \qquad 1 \le r \le K 
t_k &:= \Var(Z_r(k)) \!\!= \!\! \sum_{j=a_r^-+\Delta}^{a_r^-+\Delta+k-1} 
\Var_{\theta_r(j)} (\tilde{\fX})\,, 
\label{def:BNr}
\end{align}
it follows by comparing \cref{eq:vk-equiv} with \cref{def:vstar-r} that  
$|t_n - N \alpha_r v_\star^{(r)}(1-\eta_r)|$ are uniformly bounded.
Consequently, for some $C=C_r < \infty$ and all $N$, 
\begin{equation}\label{eq:lclt0}
\sup_{w \in \Z}  
\Big| q_{0,n}(w) - \frac{1}{\sqrt{2\pi t_n}}\exp\big(-\frac{w^2}{2 t_n}\big) \Big| \le \frac{C}{N} 
\end{equation}
(cf.~\cite[Formulas (1.1) and (1.9) in Ch. VII]{Petrov}). Since
$N^{-1} t_n \to \alpha_r v_\star(1)>0$ when $N \to \infty$, it thus follows that 
\[
\limsup_{N \to \infty} f_{N,r} (\delta) \le e^{\tfrac{\delta^2}{\alpha_r v_\star(1)}} \,. 
\]
The latter bound 
decays to one as $\delta \downarrow 0$, thereby completing the proof of the lemma.
\end{proof}
\begin{remark}\label{rmk:ext-petrov}  
By the same reasoning as \eqref{eq:lclt0},  for some $C$ and all $n-k$ large enough,
\begin{equation}\label{eq:lcltk}
\sup_{w}  
\Big| q_{k,n} (w) - \frac{1}{\sqrt{2\pi (t_n-t_k)}}e^{-\frac{w^2}{2 (t_n-t_k)}} \Big| \le \frac{C}{t_n-t_k} \,.
\end{equation}
Bounding the \abbr{mgf} of our partial sums yields for some $c<\infty$,  all $\theta$ small and $k \le n$ 
\[
q_{k,n}(w) \le 2 e^{-\theta |w|} e^{c \,  \theta^2 (n-k)} \,,
\]
which with \eqref{eq:lcltk} imply that for some $c_\delta>0$,  some $\kappa_\delta(\rho) \downarrow 0$ 
as $\rho \downarrow 0$ and all $N \ge N_0$, 
\begin{equation*}
%\label{eq:pt-bds}
\max_{k \ge (1-\rho) n} \max_{|w| \ge \delta \sqrt{N}}  \{ q_{k,n}(w) \} \le \frac{\kappa_\delta(\rho)}{\sqrt{N}},  
\qquad \min_{|z| \le \delta \sqrt{N}} \{ q_{0,n}(z) \}  \ge \frac{c_\delta}{\sqrt{N}}\,.
\end{equation*}
Setting
\[
\tau_\rho := \inf\{ k \ge (1-\rho) n :   |Z_r(k) - \E Z_r(k)| > 2 \delta \sqrt{N} \} \,,
\]
we thus deduce that for any $\rho <\rho_0$,  $n \ge n_0$ and $|z| \le \delta \sqrt{N}$,
\begin{align}
\P(\tau_\rho \le n \mid  Z_r(n) =z ) &= \sum_{k=(1-\rho)n}^n \sum_{|w| > 2 \delta \sqrt{N}} \!\!\!
\P(\tau_\rho = k,  Z_r(k) -\E Z_r(k)=w) 
 \frac{q_{k,n}(z-w)}{q_{0,n}(z)}  \nonumber  \\ 
&\le \frac{\kappa_\delta(\rho)}{c_\delta} \P(\tau_\rho \le n) \le  \frac{\kappa_\delta(\rho)}{c_\delta} \,.
\label{eq:tight-k} 
\end{align}
\end{remark}

\Cref{prop:v1-pm} and \cref{lem:lclt} imply the 
asymptotic independence of $\Phi^-$ and $\Phi^+$.
%when $N  \to \infty$.
\begin{lemma}\label{lem:v1-pm}
Fixing $\epsilon \in (0,\tfrac{1}{9})$, 
\begin{align}
\lim_{N \to \infty} \Q_{\fh}\big(\, \|\underline {y}^{(\pm)}- \E_\star [\underline {y}^{(\pm)}] \|_\infty \ge N^{\epsilon} \sqrt{\Delta} \mid A \, \big) &= 0 \,.
\label{eq:v1-pm}
\end{align}
In particular,  setting 
\begin{equation}\label{def:D-delta}
D_{\epsilon} :=  
\big\{ (\Phi^{-},\Phi^{+}) : \|\underline {y}^{(+)}- \underline{y}^{(-)} \|_\infty < 2 N^{\epsilon} \sqrt{\Delta} 
\big\} \,,
\end{equation}
we have that $\Q_{\fh}(D_{\epsilon} \mid A) \to 1$ as $N \to \infty$ and
\begin{equation}\label{eq:p-m-ind}
\limsup_{N \to \infty} 
\| \Q_{\fh}\big((\Phi^-,\Phi^+) \mid A \, \big) - \widehat{\Q}_{\fh}^{(-)} (\Phi^-) 
\widehat{\Q}_{\fh}^{(+)} (\Phi^+) \|_\tv = 0 \,.
\end{equation}
\end{lemma}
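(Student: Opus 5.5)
The starting point is \cref{lem:Q-A}: $\Q_\fh(\cdot\mid A)=\widecheck\Q_\fh(\cdot\mid\Gamma)$ with $\widecheck\Q_\fh$ a product measure. Integrating out $\Phi^o$, the $(\Phi^-,\Phi^+)$-marginal of $\Q_\fh(\cdot\mid A)$ has density with respect to $\widehat\Q^{(-)}_\fh\otimes\widehat\Q^{(+)}_\fh$ proportional to
\[
g(\Phi^-,\Phi^+):=\Q_\star\big(\underline y^{(o)}=\underline y^{(+)}-\underline y^{(-)}\big)\,.
\]
All three claims will be read off from this density. The only inputs needed are the tail bounds of \cref{prop:v1-pm}, together with \cref{rem:v1-pm}, and the local \abbr{clt} of \cref{lem:lclt}.

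\textbf{Step 1: a lower bound on the normalizer.} By \cref{rem:v1-pm} and the symmetry $\E_\star[\underline y^{(+)}]=\E_\star[\underline y^{(-)}]$, the set $G:=\{\|\underline y^{(\pm)}-\E_\star\underline y^{(\pm)}\|_\infty<N^\epsilon\sqrt\Delta\}$ has $\widehat\Q^{(-)}\otimes\widehat\Q^{(+)}$-probability $1-o(N^{-K})$. On $G$ we have $\|\underline y^{(+)}-\underline y^{(-)}\|_\infty\le 2N^{\epsilon}\sqrt\Delta=2N^{1/3+3\epsilon/2}=o(\sqrt N)$, using $\epsilon<1/9$. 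By \cref{eq:lclt0}, the product of the $K$ marginal point masses is then $g\ge c\,N^{-K/2}$. Hence $\E[g]\ge c' N^{-K/2}$.

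\textbf{Step 2: proving \cref{eq:v1-pm}.} For the complement of $G$, use only the trivial bound $g\le 1$. Together with the $o(N^{-K})$ tail bound this gives
\[
\Q_\fh(G^c\mid A)\le \frac{\E[g;G^c]}{\E[g]}\le \frac{o(N^{-K})}{c'N^{-K/2}}\to 0\,.
\]
This is exactly why \cref{prop:v1-pm} carries the factor $N^K$ (only $N^{K/2}$ is needed here). Since $G\subset D_\epsilon$, it also follows that $\Q_\fh(D_\epsilon\mid A)\to1$.

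\textbf{Step 3: proving \cref{eq:p-m-ind}.} On $D_\epsilon$, every argument $\underline y^{(+)}-\underline y^{(-)}$ has sup-norm at most $\delta\sqrt N$ for any fixed $\delta>0$ once $N$ is large. By \cref{lem:lclt}, $g$ is therefore constant on $D_\epsilon$ up to a multiplicative factor $f_N(\delta)$, whose $\limsup$ tends to $1$ as $\delta\to0$. Write $\mu=\widehat\Q^{(-)}\otimes\widehat\Q^{(+)}$ and $\nu=\Q_\fh((\Phi^-,\Phi^+)\mid A)$. Then $\nu(D_\epsilon^c)\to0$ by Step 2, and $\mu(D_\epsilon^c)\to0$ by \cref{rem:v1-pm}. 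On $D_\epsilon$, the density $d\nu/d\mu=g/\E_\mu g$ lies within a factor $f_N(\delta)^{\pm1}(1+o(1))$ of $1/\mu(D_\epsilon)$. The standard estimate $\|\nu-\mu\|_\tv\le \nu(D^c)+\mu(D^c)+\sup_D|d\nu/d\mu-1|$ then yields $\limsup_N\|\nu-\mu\|_\tv\le f(\delta)-1$ with $f(\delta):=\lim_N f_N(\delta)$. Letting $\delta\to0$ gives \cref{eq:p-m-ind}.

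\textbf{Main obstacle.} The delicate point is Step 2. The conditioning on $\Gamma$ could in principle tilt heavily toward atypical $\underline y^{(\pm)}$. This is ruled out only because the polynomial lower bound $N^{-K/2}$ on the typical weight beats the super-polynomially small mass of atypical configurations. I would double-check that \cref{prop:v1-pm} applied at $i=\Delta$ for both $\Phi^-$ and, by reflection, $\Phi^+$ indeed gives $o(N^{-K})$ jointly.
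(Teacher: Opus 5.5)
Your proposal is correct and follows essentially the paper's proof. It uses the same three ingredients: the local-\abbr{clt} lower bound $\widecheck\Q_\fh(\Gamma)\ge cN^{-K/2}$; the trivial bound $\Q_\star\le 1$ played against the $o(N^{-K})$ tails of \cref{prop:v1-pm}, which gives \cref{eq:v1-pm} and $\Q_\fh(D_\epsilon\mid A)\to 1$; and the $f_N(\delta)$ ratio control from \cref{lem:lclt} on $D_\epsilon$. The only cosmetic difference is that you bound the total variation directly through the density $g$, whereas the paper routes it through the laws conditioned on $D_\epsilon$ and the elementary inequality \cref{bd:tv}. One small fix: $\lim_N f_N(\delta)$ need not exist, so write $\limsup_N$ there.
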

\begin{proof} The event $D^c_{\epsilon}$ requires having 
$\max_{\pm} \{ \|\underline {y}^{(\pm)}- \E_\star [\underline {y}^{(\pm)}] \|_\infty \} 
\ge N^{\epsilon} \sqrt{\Delta}$.  Hence,  we deduce from  \cref{prop:v1-pm} and \cref{rem:v1-pm} that 
\begin{equation}\label{eq:whp-D-eps}
\lim_{N \to \infty} \widecheck{\Q}_{\fh}(D_\epsilon) = 1 \,.
\end{equation}
Moreover,  while proving \cref{eq:fNd} we saw that
for some $c=c(\delta)>0$ and any $N$,  
\begin{equation}\label{bd-rdn}
\inf\{ \Q_\star (\underline{y}^{(o)}=\underline{v}) : \underline{v} \in \Z^K, 
\|\underline{v}\|_\infty \le \delta \sqrt{N} \} \ge 2 c N^{-K/2} \,.
\end{equation}
For $\epsilon<\tfrac{1}{9}$,  on the intersection of $\Gamma$ from \cref{eq:w-y-pm} 
with $
%(\Phi^{-},\Phi^+) \in  
D_\epsilon$ one has that 
$\|\underline{y}^{(o)}\|_\infty \le 2 N^\epsilon \sqrt{\Delta} = o(\sqrt{N})$,
allowing us to deduce from the lower bound \eqref{bd-rdn} that for the product measure 
$\widecheck{\Q}_{\fh}(\cdot)$ of \cref{def:QI} and all large enough $N$,
\[
\widecheck{\Q}_{\fh}(\Gamma) \ge  \widecheck{\Q}_{\fh}(\Gamma \cap D_\epsilon)
\ge 2 c N^{-K/2} \widecheck{\Q}_{\fh} (D_\epsilon) 
\ge c N^{-K/2} \,.
\]
In particular,  in view of \cref{def:Gamma},  for any $(\Phi^-,\Phi^o,\Phi^+)$ satisfying 
%for all $N$ large enough, 
$\Gamma$ of \cref{eq:w-y-pm},  
\begin{equation}\label{eq:amir1}
\Q_{\fh} (\Phi \mid A) \le c^{-1} N^{K/2} \widehat{\Q}_{\fh}^{(-)}(\Phi^-) \widehat{\Q}_{\fh}^{(+)}(\Phi^+) \Q_\star (\Phi^o) \,,
\end{equation}
whereby,  with $\Q_\star(\cdot) \le 1$,  the specified decay rates
in \cref{eq:v1-one-sided-upper,eq:v1-one-sided-lower} yield \cref{eq:v1-pm}.  This implies in turn,
by the same reasoning which led to \cref{eq:whp-D-eps},  
that $\Q_{\fh}(D_{\epsilon} \mid A) \to 1$.  Thereby,  from  the elementary inequality
\begin{equation}\label{bd:tv}
\| \P(\cdot\mid B) - \P(\cdot\mid A) \|_\tv \le  \P(B^c\mid A)\,, 
 \qquad \quad \forall B \subseteq A \,,
\end{equation}
we deduce that also 
\begin{equation}\label{eq:amir-tv}
\lim_{N \to \infty} 
\| \Q_{\fh}\big(\, \cdot \mid A \, \big) - \Q_{\fh} \big(\, \cdot \mid A \cap D_\epsilon \, \big) \|_\tv = 0 \,.
\end{equation}
Moreover, with $N^{\epsilon} \sqrt{\Delta} \le \delta \sqrt{N}$ for 
all $N \ge N_0(\delta)$, in view of \cref{def:Gamma}
the Radon--Nikodym derivative of
$\Q_\fh( (\Phi^-,\Phi^+) \mid A )$
\abbr{wrt} the product law
$\widehat{\Q}_{\fh}^{(-)} \otimes \widehat{\Q}_{\fh}^{(+)}$ 
is bounded throughout $D_{\epsilon}$
between $\tfrac{1}{f_N(\delta)}$ and $f_N(\delta)$.  Hence,  by \cref{lem:lclt}, 
\[
\limsup_{N \to \infty} 
\| \Q_{\fh}\big(\, (\Phi^-,\Phi^+)
\mid A \cap D_\epsilon \, \big) - \widehat{\Q}_{\fh}^{(-)}\otimes \widehat{\Q}_{\fh}^{(+)}
\big( (\Phi^-,\Phi^+) \mid D_\epsilon \, \big) \|_\tv = 0 \,.
\]
It then follows, in view of \cref{eq:amir-tv} and the triangle inequality for $\|\cdot\|_{\tv}$,  that 
\[
\limsup_{N \to \infty} 
\| \Q_{\fh}\big(\, (\Phi^-,\Phi^+)
\mid A \, \big) - \widehat{\Q}_{\fh}^{(-)}\otimes \widehat{\Q}_{\fh}^{(+)}
\big( (\Phi^-,\Phi^+) \mid D_\epsilon \, \big) \|_\tv = 0 
\]
from which we deduce,  by \cref{eq:whp-D-eps,bd:tv},   
that \cref{eq:p-m-ind} also holds.
\end{proof}

\begin{proof}[Proof of \cref{lem:non-crossing-BB}]
Recall from \cref{lem:tilting} that $\mu_{N,K}(\cdot)$ of \cref{eq:1d-sos-many-curves-phi} is 
%for $\underline{\widetilde \phi}$, 
$\Q_\fh (\cdot \mid B)$ 
%of \cref{def:Qh}, 
for the event $B$ of \cref{def:B}.  Thus,  by \cref{bd:tv}
it suffices to show that for the event $A$ of \cref{def:A},  
\begin{equation}\label{eq:nc}
 \lim_{N \to \infty} \Q_\fh (B^c \mid A) = 0 \,.  
\end{equation}
To this end,  setting $\widetilde{\phi}_{K+1}(\cdot) \equiv \phi^\star_{K+1}(\cdot) \equiv 0$, 
recall that $a_r^-=-a_r^+$ and 
\[
B^c \cap A=\bigcup_{r} \bigcup_{|i| < a_r^+ - \Delta}  \{ \widetilde{\phi}_{r} (i)
< \widetilde{\phi}_{r+1} (i) \} \cap A  \,.
\]
Utilizing the union bound and 
the invariance to $\sgn(i)$ of $\Q_{\fh}(\widetilde{\phi}_{r} (i) < \widetilde{\phi}_{r+1} (i) \mid A)$, 
we thus deduce from \cref{eq:amir1} that 
\begin{align}\label{eq:bd-pre-partial-sum}
\Q_\fh(B^c \mid A) &\le 2 K c^{-1} N^{1+K/2} \max_{r,  \Delta< i \le a_r^+}
\Q_{\fh}^I 
 \Big( \widetilde{\phi}_{r} (a_r^- +i) < \widetilde{\phi}_{r+1} (a_r^- +i) \Big) \,,
\end{align}
for independent partial sums 
$\{\widetilde{\phi}_{r+b}(a_{r+b}^-+i)-\widetilde{\phi}_{r+b}(a_{r+b}^-+\Delta),  i > \Delta\}$
from $\Q_\star$,  with $b=0,1$,  independently of $\widetilde{\phi}_r(a_r^-+\Delta)$ and 
$(\widetilde{\phi}_{r+1}(i),  i \le a_{r+1}^-+\Delta)$ whose joint law is induced by 
$\widehat{\Q}_{\fh}^{(-)}$.  
We express the events in \cref{eq:bd-pre-partial-sum} via  the zero-mean partial sums
\begin{equation}\label{eq:Zri-o}
\widehat{Z}_{r}(i):= \widetilde{\phi}_{r}(i) - \E_\star \widetilde{\phi}_{r} (i) \,.
\end{equation}
More precisely,  thanks to \cref{eq:v1-one-sided-upper} we replace $\widehat{Z}_r(a_r^-+\Delta)$ 
% $\widetilde\phi_r(a_r^- +\Delta) - \E_\star [ \widetilde{\phi}_r(a_r^- +\Delta)]$
by $- N^{\epsilon}\sqrt\Delta$ and using
\begin{equation}\label{def:bar-Zri}
\bar Z_{r}(i):= \widehat{Z}_{r}(a_r^- + \Delta) - \widehat{Z}_r(a_r^-+i) \,,
%\E_\star [\widetilde{\phi}_{r} (a_r^-+i)]  -  \widetilde{\phi}_{r}(a_r^-+i) 
% +  \widetilde{\phi}_r(a_r^-+\Delta) - \E_\star[ \widetilde{\phi}_r(a_r^-+\Delta) ] 
\end{equation}
we deduce that it suffices to show that for all $\ell \in (\Delta,a_r^+]$, 
\begin{align}\label{eq:bd-amir0}
&  \widehat{\Q}_{\fh}^{(-)} \times \Q_\star (\bar Z_r(\ell) > V_{r}(\ell) ) \le N^{-(K+1)} \,, \\
\hbox{where} \quad 
& V_r(\ell) :=  \E_\star [\widetilde{\phi}_r(a_r^-+\ell)]  
% + \big( \widetilde{\phi}_r(a_r^-+\Delta) - \E_\star[\widetilde{\phi}_r(a_r^-+\Delta)] \big) 
- N^\epsilon \sqrt{\Delta}
- \widetilde{\phi}_{r+1}(a_r^-+\ell) \,.
\label{def:Vri}
\end{align}
To this end,   recall that $|\theta_k(j)| \le \theta_\star < \beta$ 
for any $k \le K$ and all $|j| \le N$ (cf.~\cref{eq:theta_k-def}),  hence it follows from  
\cref{eq:tilde-phi-def} that for some $c<\infty$,
\[
\log \E_\star\Big[e^{s \bar Z_{r}(\ell)}\Big]
= \sum_{j=a_r^-+\Delta}^{a_r^-+\ell-1} \Big(
\Lambda(\theta_r(j)-s) - \Lambda(\theta_r(j)) + s \Lambda'(\theta_r(j)) \Big)
\le c (\ell-\Delta) s^2 \quad \forall |s| \le \frac{1}{c} \,.
\]
That is,  $\bar Z_r(\ell)$ is in the collection $\Gamma(2 c (\ell-\Delta),c)$ of sub-Gamma 
variables of variance factor $2 c (\ell-\Delta)$ and scale parameter $c$,  yielding the tail estimate
\begin{equation}\label{eq:sub-Gamma}
\Q_\star(|\bar Z_r(\ell)| \ge 2 \sqrt{c (\ell-\Delta) t} + ct) \le 2 e^{-t} \,, \qquad \forall t >0,  \ell > \Delta 
\end{equation}
(see \cite[Eq.~(2.6)]{BLM13}).  Utilizing \cref{eq:sub-Gamma} we now show 
\cref{eq:bd-amir0} in three sub-intervals of $(\Delta,a_r^+]$.

\noindent{\bf I.} Having $\ell \in (\Delta,a_r^+-a_{r+1}^+]$.  For some $\eta \in (0,1]$ and all $\ell \in (\Delta, a_r^+]$, 
\begin{align*}
 \E_\star [\widetilde{\phi}_r(a_r^-+\ell)] - N^{\epsilon} \sqrt{\Delta}
& \ge 3 \eta \ell^2 N^{-1} - N^{\epsilon} \sqrt{\Delta} 
%\\ &
 \ge 2 \eta \ell^2 N^{-1}
 \,.
\end{align*}
With $a_r^+-a_{r+1}^+ = a_{r+1}^--a_r^-$,  we can also assume,  by  \cref{prop:N-third-almost-tight} 
at $\delta=\epsilon/2$,  that in this range,  
\[
V_r(\ell) \ge  2 \eta \ell^2 N^{-1} - \widetilde\phi_{r+1}(a_r^-+\ell) \ge 2 \eta \ell^2 N^{-1}
- K N^{1/3+\epsilon}  \ge \eta \ell^2 N^{-1} 
\]
and \cref{eq:bd-amir0} follows,  since by \cref{eq:sub-Gamma} at $t_\ell:=\frac{\eta^2}{9c} \ell^3/N^{2}$ we have that for some $c'>0$, 
\[
\Q_\star \big( \bar Z_r(\ell)  >  \eta \ell^2 \, N^{-1}  \big) \le 2 e^{-t_\ell} \le 2 e^{-t_\Delta} \le e^{-c' N^{2 \epsilon}} 
 \qquad  \forall \ell \in (\Delta,a_r^+] \,.
\]
{\bf II.} Having $\ell = i + a_r^+- a_{r+1}^+$ with $i \in (\Delta,a_{r+1}^+]$.  By
\cref{eq:v1-one-sided-lower},  we have that \abbr{whp} 
\[
\widehat{Z}_{r+1}(a_{r+1}^- +\Delta) \le N^{\epsilon}\sqrt\Delta \,,
\]
allowing us to replace $V_r(\ell) - \bar Z_{r+1}(i)$,  by the non-random
\begin{equation}\label{def:Dri}
D_{r}(i) := \E_\star [\widetilde{\phi}_r(a_{r+1}^-+i) - \widetilde{\phi}_{r+1}(a_{r+1}^-+i)] - 2 N^\epsilon \sqrt{\Delta} \,.
\end{equation}
In view of \cref{eq:phi-star-equiv},  for some $\eta'>0$ and all $N$ large enough
\begin{equation}\label{eq:bd-Dri}
\inf_{i \in [0,a_{r+1}^+]} \{ D_r(i) \} \ge \frac{N}{2} \inf_{t \in (-\alpha_{r+1},0]} \{ 
\phi_r^\star(t) - \phi_{r+1}^\star(t) \} \ge 2 \eta' N 
\end{equation}
and \cref{eq:bd-amir0} holds since by \cref{eq:sub-Gamma},  for some $c>0$ and all $N$,
\[
\Q_\star (\bar Z_r(\ell)> \eta' N) + \Q_\star( \bar Z_{r+1}(i) < - \eta' N) \le 4 e^{-c \sqrt{N}} \,.
\]
{\bf III.} Having $\ell = i + a_r^+- a_{r+1}^+$ with $i \in (0,\Delta]$.  In view of  \cref{prop:N-third-almost-tight} 
we may assume that both $\widetilde{\phi}_{r+1}(a_{r+1}^-)$ and the curve 
$\widetilde{\phi}_{r+2}(\cdot)$ on $I:=[a_{r+1}^-,a_{r+1}^-+\Delta]$ do not exceed 
the non-random floor $h :=  N^\epsilon \sqrt{\Delta}$.  With the \abbr{lhs} of \cref{eq:bd-amir0} 
increasing in terms of the  $(r+1)$-th curve from $\widehat{\Q}_{\fh}^{(-)}$
restricted to $I$,  by the Gibbs property of that system and 
% the monotonicity 
\cref{lem:Qh-monotonicity},  the probability in  \cref{eq:bd-amir0}
is at most the one obtained when drawing $\widetilde{\phi}_{r+1}(\cdot)-h$ in \cref{def:Vri} 
via the partial sums of $\Q_\star$ conditioned to stay non-negative throughout $I$.  Recall,  as in 
our proof of \cref{prop:N-third-almost-tight},  that such conditioning event occurs with 
probability $\Omega(\Delta^{-1/2})$.  Hence,  it suffices to show that
\begin{equation}\label{eq:bd-amir9}
\Q_\star^{\otimes 2} (\bar Z_r(\ell) > V_{r}(\ell) - h ) \le N^{-(K+2)} \,.
\end{equation}
Next,  
it follows from \cref{eq:bd-Dri} and \cref{eq:sub-Gamma} that 
\begin{align*}
\Q_\star^{\otimes 2} (\bar Z_r(\ell) +  \widehat{Z}_{r+1}(a_{r+1}^-+i) > D_r(i) ) 
& \le  \Q_\star (\bar Z_r(\ell)> \eta' N) + \Q_\star(  \widehat{Z}_{r+1}(a_{r+1}^-+i)  > \eta' N) \\
&  \le 4 e^{-c \sqrt{N}} \,.
\end{align*}
Analogously to case {\bf II},  here $V_{r}(\ell) + \widehat{Z}_{r+1}(a_{r+1}^-+i) - h  = D_r(i)$ of \cref{def:Dri}, 
yielding \cref{eq:bd-amir9} and thereby completing our proof.
\end{proof}

\section{Proof of \cref{thm:1}}\label{sec:pf}

In view of \cref{lem:non-crossing-BB} and \cref{lem:Q-A} we choose $\Phi$ throughout 
this section according to $\Q_{\fh}(\Phi \mid A)=\widecheck{\Q}_{\fh}(\Phi \mid \Gamma)$ and fix 
$\epsilon>0$ small,  whereby $\epsilon':=\frac{1}{6}-2 \epsilon > 0$.  
\subsection{The convergence to Brownian bridges}\label{subsec:X0}
%With $\E_\star[\widetilde{\phi}_r(i)]=0$ outside $[a_r^-,a_r^+]$,  
 Recall  \cref{eq:amir1} that $\widecheck{\Q}_{\fh}(\Phi \mid \Gamma)$ is at most $O(N^{K/2})$ times the 
un-conditional product measure $\widecheck{\Q}_{\fh}(\Phi)$ of \cref{def:QI}.  
Also,  our offset $N \phi^\star_r(\cdot)$ is zero outside $[-\alpha_r,\alpha_r]$. 
Hence,  the (un-conditional) tail bounds 
of \cref{eq:almost-tight-induction} and \cref{eq:v1-one-sided-lower} imply that for any $\delta>0$, 
% \cref{prop:N-third-almost-tight} 
\[
\lim_{N \to \infty} 
\widecheck{\Q}_{\fh}\Big( \max_{r,  \alpha_r(1-\eta_r) \le |t| \le 1} \{ |\widehat{\phi}_r(t)| \} \ge \delta \mid \Gamma\Big) 
= 0
\]
(where $\eta_r=\frac{\Delta}{N \alpha_r}$,  see \cref{def:vstar-r}).
It thus suffices to consider in \cref{thm:1}, \cref{part:a} only the weak 
convergence for the part of $\underline{\widehat{\phi}}:[-1,1] \to \R^K$ induced by $\Phi^o$ of \cref{dfn:Phis}.
Specifically,  prior to centering and scaling,  those paths are represented in terms of the partial sums $Z_r(k)$ 
of \cref{def:Sri} as
\begin{equation}\label{eq:phi-o}
\widetilde{\phi}_r(a_r^-+\Delta+k)= y_r^{(-)}+Z_r(k),   \qquad 0 \le k \le n := 2(a_r^+-\Delta)\,, 
\end{equation}
conditional upon $Z_r(n)=y_r^{(+)}-y_r^{(-)}$.  While the law of $y_r^{(\pm)}$ of \eqref{eq:y-def}
is induced by $\widehat{\Q}^{(\pm)}_{\fh}(\cdot)$,  setting 
$\underline{z}^{(\pm)}:=\underline{y}^{(\pm)} - \E_\star [\underline{y}^{(\pm)}]$ we know from 
\cref{lem:v1-pm} that it suffices to consider only the event $D_\epsilon$,  where
\[
\|\underline {z}^{(\pm)}\|_\infty \le 2 N^{\epsilon} \sqrt{\Delta}  \le \frac{1}{2} N^{1/2-\epsilon'} \,.
\] 
In particular,  given our $\sqrt{N}$ scaling,  we are thus allowed to shift the partial sums, 
replacing $y_r^{(-)}$ in \eqref{eq:phi-o} by $\E_\star[y^{(-)}_r]=\E_\star[y^{(+)}_r]$
%\begin{equation}\label{eq:phi-o}
%\widecheck{\phi}^o_r(i) - \widecheck{\phi}^o_r(a_r^-+\Delta)
%+ z^{(-)}_r  + \E_\star [  \widecheck{\phi}^o_r(a_r^-+\Delta) ] \,,  \quad 
%i \in [a_r^-+\Delta,a_r^+-\Delta]\,,
%\end{equation}
and eliminate the effect of $\widehat{\Q}^{(\pm)}_{\fh}(\cdot)$ by establishing the
relevant weak convergence conditioned on the specified values 
\[
Z_r(n) = z,  \quad \hbox{uniformly} \;\;  \hbox{over} \;\; |z| \le N^{1/2-\epsilon'}\,.
\]
Further,  in view of \eqref{eq:phi-star-equiv} we replace the offset term $N \phi_r^\star(t'_k)$
at $t'_k=(a_r^-+\Delta+k)/N$ by 
\[
\E [Z_r(k)] + \sum_{j=a_r^-}^{a_r^-+\Delta-1} \big[\E_{\theta_r(j)}(\tilde\fX) - \sgn(j)\big] 
= \E[Z_r(k)] + \E_\star[y_r^{(-)}] \,.
\]
Doing so reduces the proof of \cref{thm:1}, \cref{part:a} to showing that for fixed $1 \le r \le K$, 
the linearly interpolated,  re-scaled partial sums
\begin{equation}\label{def:hatSri}
S_N^\dagger(t'_k) := \frac{1}{\sqrt{N \alpha_r}} (Z_r(k) - \E [Z_r(k)]),  \qquad 0 \le k \le n \,,
\end{equation}
conditioned on the value $z$ of $Z_r(n)$,
converge weakly to $\BB_r(\cdot)$ of \eqref{def:un-X}.  To this end,  with 
$v_\star:[-1,1] \to [0,\frac{2}{\theta_\star}]$ of  \cref{eq:V-def} continuous and increasing,
it suffices to prove that the time-changed process 
\begin{equation}\label{def:Sdd}
S_N^\ddagger(s) := S_N^\dagger(\alpha_r v_\star^{-1}(s))\,,
\end{equation}
converges weakly on $C([0,v_\star(1)],\R)$,  to the Brownian bridge on $[0,v_\star(1)]$,   
\begin{equation}\label{dfn:BB-alt}
\BB_\star(s) := \BB_r(\alpha_r v_\star^{-1} (s)) \,.
\end{equation}
Further,  $\max_{N} |N\alpha_r + a_r^-|$ is finite (see \cref{def:akpm}),  while we have for $t_k$ of 
\eqref{def:BNr} and $v_\star^{(r)}(\cdot)$ of \cref{def:vstar-r}, that
\[
\max_{N,k} \big| N \alpha_r v^{(r)}_\star\big(t'_k/\alpha_r
%-1+\eta_r+\tfrac{k}{N \alpha_r}
\big) -t_k \big| < \infty \quad  \text{and} \quad \frac{t_n}{N \alpha_r} \to v_\star(1)
\]
(see \cref{eq:V-def-as-int,eq:vk-equiv}).  Consequently,  we  may and will instead prove the 
weak convergence to $\BB_\star(\cdot)$ of $S_N(\cdot)$ obtained by linear interpolation 
between the discrete samples
\begin{equation}\label{def:SN}
S_N(v_\star(1) t_k/t_n) = S_N^\ddagger (v_\star(t'_k/\alpha_r)) = S_N^{\dagger}(t'_k),   \qquad 0 \le k \le n 
\end{equation}
(since
% as $\eta_r \to 0$,  
the covariance of
$\BB_\star\Big(v_\star \circ (v_\star^{(r)})^{-1} \big(\frac{v_\star^{(r)}(1-\eta_r)}{v_\star(1)} s\big) \Big)$ converges to that of $\BB_\star(s)$).  Without conditioning on $S_N(v_\star(1))$,  such weak convergence is an immediate consequence of the invariance principle (see, e.g.,  \cite{Sakhanenko11}).  The conditional version,
in the \abbr{iid} case,  goes back to \cite{Liggett68}.  However,  we did not find a proof of its non-\abbr{iid} extension,  
so provide such here.   Specifically,  \cref{eq:tight-k} amounts to uniform tightness of $S_N(\cdot)$ at $v_\star(1)$,  
given $S_N(v_\star(1))=\frac{z}{\sqrt{N \alpha_r}}$,  thanks to which the claimed weak convergence
follows from showing that for any $\rho>0$ small enough and $F \in C_b(C([0,v_\star(1-3\rho)],\R),\R)$, 
\begin{equation}\label{eq:F-conv}
\lim_{N \to \infty} \sup_{|z| \le N^{1/2-\epsilon'}}\Big\{ \Big| \E [F(S_N(\cdot)) \mid Z_r(n)=z] 
- \E [F(\BB_\star(\cdot))] \Big| \Big\} = 0 \,.
\end{equation}
Proceeding to show \eqref{eq:F-conv} we set the short-hand $\bar n:=(1-\rho) n$ and 
$\bar t :=v_\star(1) t_{\bar n}/t_n$,
% be the time-change bar corresponding to $t'_{\bar n}$, 
noting that 
by \eqref{eq:tight-k} we may replace $F(S_N(\cdot))$ by $F(S_N(\cdot)) \one_{\{|S_N(\bar t)| \le 1\}}$.  Further,  with
$\bar t \ge v_\star(1-3\rho)$ for all large $N$,  we get in terms of 
$q_{\bar n,n}(\cdot)$ from \cref{def:q-k-ell} that 
% originally it was $v_\star((1-2\rho)(1-\eta_r)) - v_\star(-1+\eta_r) \to v_\star(1-2\rho)$ as $\eta_r \to 0$ with $N$
% and the extra factor $v_\star(1)/(v_\star(1-\eta_r)-v_\star(-1+\eta_r)) \to 1$ with $N$ to keep the above property. 
\[
\E [F(S_N(\cdot)) \one_{\{|S_N(\bar t)| \le 1\}} \one_{\{Z_r(n)=z\}}] = \sum_{|w| \le 1}
q_{\bar n,n}(z-\sqrt{N \alpha_r} \,  w)  \E \big[F(S_N(\cdot)) \one_{\{S_N(\bar t)=w\}} \big] \,,
\]
which by \eqref{eq:lcltk} is within $\frac{C_\rho}{N}$ of 
\[
\frac{1}{\sqrt{2\pi (t_n-t_{\bar n})}} \E \Big[ e^{-\frac{(\sqrt{N \alpha_r} S_N(\bar t)-z)^2}{2 (t_{n}\,-t_{\bar n})}} 
F(S_N(\cdot)) \one_{\{|S_N(\bar t)| \le 1\}}  \Big] \,.
\]
The same reasoning,  with $F(\cdot)=1$,  yields such a formula for $\P(Z_r(n)=z) \ge \frac{c_\delta}{\sqrt{N}}$
(as $|z| \le \delta \sqrt{N}$),  so taking the ratio of the two expressions results in 
\begin{equation}\label{eq:F-cond}
\E [F(S_N(\cdot)) \mid Z_r(n)=z] = 
\frac{\E \Big[ e^{-\frac{S_N(\bar t)^2}{2 \sigma_N^2}} F(S_N(\cdot))  \Big]}{\E \Big[ e^{-\frac{S_N(\bar t)^2}{2 \sigma_N^2}} \Big]}
+O(N^{-1/2}) + O(\kappa_\delta(\rho)) \,,
\end{equation}
where $\bar t \to v_\star(1-\rho)$,  hence
$\sigma_N^2 := \frac{1}{N \alpha_r} (t_n-t_{\bar n}) \to v_\star(1)-v_\star(1-\rho)$.

Now,  recall that the zero-mean increments $\zeta_j$ of $Z_r(\cdot) - \E[Z_r(\cdot)]$ satisfy 
condition \cite[(3.2)]{Sakhanenko11} (denoted there $\xi_j$).  Namely,   for some $\lambda_0>0$ and $C<\infty$,
\[
\lambda_0 \E \big[ |\zeta_j|^3 e^{\lambda_0 |\zeta_j|} \big] \leq 6 \Var(\zeta_j),  \qquad
\sum_{j=0}^{n-1} \Var(\zeta_j) \le C N \,.
\]
Thus,  following  \cite[Lemma 3.1]{Sakhanenko11} with a Brownian scaling by $t_n/v_\star(1)$,  and recalling 
\cref{def:SN},
yields
a coupling $\P^\star$ of a standard Brownian motion $W(\cdot)$ and $S_N(\cdot)$ on $[0,\bar t]$
(see \cite[(2.2)]{Sakhanenko11}), such that for some $\delta_N \to 0$, 
%for some finite $C_\ell$ any $\ell$ and all $N$, 
as $N \to \infty$,  
\begin{equation}\label{eq:KMT-tail}
\lim_{N \to \infty} \P^\star \Big( 
\sup_{s \in [0, \bar t]} \{ | S_N(s) -W(s) | \} \ge \delta_N \Big) = 0
%\ge C_\ell \frac{\log N}{\sqrt{N}} \Big) \le N^{-\ell} 
% =  \frac{C N}{x^2} e^{-\lambda_0 x} 
\end{equation}
(here \cite[(3.3)]{Sakhanenko11} holds at $\widetilde{\beta}=0$,  so we get \cref{eq:KMT-tail}
by considering \cite[(3.4)]{Sakhanenko11} for $z=0$ and $x \ge C \log N$ for $C$ large enough).
%=\frac{C_\ell}{C_0} \sqrt{\alpha_r} \log N$
%As before,  re-parametrizing $v_\star(1)t_k/t_n$
%in \eqref{def:hatSri} still supplies \eqref{eq:KMT-tail}. 
% when $N \to \infty$.  
This allows us when taking $N \to \infty$ to replace on the \abbr{rhs} of 
\eqref{eq:F-cond} the process $S_N(\cdot)$ by $W(\cdot)$ and the parameters $\bar t$
and $\sigma_N$ by their limits,  to deduce that uniformly in our range of $z$,  
\[
\E [F(S_N(\cdot)) \mid Z_r(n)=z] = \frac{\E \Big[ e^{-\frac{W(v_\star(1-\rho))^2}{2 (v_\star(1)-v_\star(1-\rho))}} 
F(W(\cdot))  \Big]}{\E \Big[ e^{-\frac{W(v_\star(1-\rho))^2}{2 (v_\star(1)-v_\star(1-\rho))} } \Big]}
+o_N(1) + O(\kappa_\delta(\rho)) \,.
\]
It is easy to verify that the first term on the \abbr{rhs} is precisely $\E[F(\BB_\star(\cdot))]$,
so taking first $N \to \infty$ and then $\rho \downarrow 0$ yields \cref{eq:F-conv}.  As explained before,
having \cref{eq:F-conv} for all $F \in C_b(C([0,v_\star(1-3\rho)],\R),\R)$ and $\rho$ small enough, implies the stated
weak convergence on $C([0,v_\star(1)],\R)$ and thus concludes the proof of \cref{thm:1}, \cref{part:a}.

\begin{remark}
\label{rem-Senya}
Our proof in fact yields the following, via a comparison with an (inhomogeneous) random walk bridge whose starting and end points are 
bounded by $O(N^{1/3+\epsilon})$ with arbitrary $\epsilon>0$: fixing $\delta\in (0,1/3)$,  the random variable 
$N^{\delta/2} \widehat\phi_k(-\alpha_k+N^{-\delta})$
converges in distribution to a zero mean Gaussian random variable of variance $v_\star'(-1)/\alpha_k$. 
In particular, this resolves in the affirmative \cite[Conjecture 2]{OSV24}, which deals with this question for $K=1$.
\end{remark}

\subsection{The convergence to the Brownian \texorpdfstring{\abbr{gate}}{GATE}}
In view of \cref{eq:p-m-ind} it suffices to prove the joint convergence of $\{\underline{\psi}^{(-r)}_N,  1 \le r \le K\}$,
to an independent Brownian \abbr{gate} of law $\mu^{\mathfrak{o}}_{\lambda,\fa_r,K+1-r}$,  subject to $I_N^{(-r)}$-shifts,
under the law induced by the \emph{unconditional} $\widehat{\Q}_{\fh}^{(-)}$
(the corresponding limit then must hold also for $\{\underline{\psi}^{(r)}_N,  1 \le r \le K\}$,  
with the two limits necessarily mutually independent).  To this end,  
we proceed in two steps: first,  the following proposition 
establishes the tightness of  $N^{-1/3} \widetilde\phi_r (a_{r-1}^-+2 \Delta)$
and of $N^{-1/3} \widetilde\phi_r (a_{r}^--2 \Delta)$.  This allows us in a 
second step to get the stated convergence by applying results from \cite{Serio23}.
% (\widehat v_2^{(-(r-1))},v_1^{(-r)})$.
\begin{proposition}\label{lem:N-third-tight} 
For any $\eta>0$ there exists $M_\eta<\infty$ such that 
\begin{equation}
    \label{eq:endpoints-N-13-top}
\widehat{\Q}_{\fh}^{(-)} \left( \widetilde\phi_r (a_{r-1}^-+2 \Delta) \;\vee\;
\widetilde\phi_r (a_{r}^-- 2\Delta) > M_\eta N^{1/3} \right) \le \eta \,, \qquad \forall 1 \le r \le K \,.
\end{equation}
\end{proposition}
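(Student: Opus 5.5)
We work under $\widehat{\Q}_{\fh}^{(-)}$ and bound the two heights $\widetilde\phi_r(a_{r-1}^-+2\Delta)$ and $\widetilde\phi_r(a_r^--2\Delta)$ separately, both at the same time point $x$ inside the flat stretch $I_{-r}=(a_{r-1}^-+\Delta,a_r^-]$. This is an increasing event, so by \cref{lem:Qh-monotonicity} we may push the competing constraints to their most favorable values. First, by \cref{prop:N-third-almost-tight} with $\delta$ small, we may assume, off an event of probability $O(N^{-L})$, that every curve $\widetilde\phi_s$ with $s\ge r$ stays below $K N^{1/3+2\delta}$ throughout $[-N,a_s^-]$.

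The plan is to induct downward in the curve index, as in the proof of \cref{prop:N-third-almost-tight}, starting with $\widetilde\phi_K$ and moving up to $\widetilde\phi_r$. By the Gibbs--Markov property (\cref{def-Gibbs}) we condition on everything outside a window $J=[x-N^{2/3}\Lambda,x+N^{2/3}\Lambda]\subset[-N,a_r^-]$ around $x$, where $\Lambda$ is a large constant. Such a window fits because $x$ lies at distance $\ge 2\Delta\gg N^{2/3}$ from both ends of $I_{-r}$. On $J$ the restriction is a \abbr{rw} \abbr{gate} (\cref{def-RWGATE}, \cref{rem:Qhat}) of the curves $r,\dots,K$, with ceiling $\widetilde\phi_{r-1}$ (or none if $r=1$) and boundary values at most $KN^{1/3+2\delta}$. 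Raising the ceiling to $\infty$ and the boundary values to $KN^{1/3+2\delta}$ only increases the probability of the event, by \cref{lem:Qh-monotonicity}.

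We then need the key input: for an area-tilted ordered ensemble with tilts $\fh\lambda^{s-1}/N$ on a window of length $\ge N^{2/3}\Lambda$ with $\Lambda\ge N^{3\delta}$, started from heights $\le KN^{1/3+2\delta}$, the height at the center is $O(N^{1/3})$ with probability $\ge 1-\eta$. For the bottom curve (the $r=K$ step, floor $0$), the heuristic is that the tilt pulls the curve down from height $H$ to $O(N^{1/3})$ within time $O(H N^{1/3})\ll \Delta$ and keeps it there. To make this rigorous we use the stationarity and tightness results for the \abbr{rw} \abbr{gate} in \cite[Sec.~5]{HKS25}, specifically \cite[Cor.~5.2]{HKS25} with $\mathsf{R}=O(1)$. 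Alternatively we can use the transfer-operator bound \cref{eq-230726e}, $\psi(v)\le Ce^{-bv/N^{1/3}}$, together with the coupling estimate \cref{eq-230726b}, applied on the window: this shows that the single-curve marginal at distance $\gg N^{2/3}\cdot N^{1/3+2\delta}/N^{1/3}$ from the boundary is close in \abbr{tv} to $\psi$, hence $O(N^{1/3})$-tight. For the upper curves, once $\widetilde\phi_{s+1}$ is known to be at most $MN^{1/3}$ on a sub-window, we treat it as a floor of height $MN^{1/3}$ via monotonicity, subtract it, and repeat the single-curve argument with tilt $\fh\lambda^{s-1}$.

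The hard part is the initial relaxation from height $N^{1/3+2\delta}$ down to $O(N^{1/3})$, which requires windows of length $N^{2/3+2\delta}$. This is why the window is placed at distance $2\Delta=2N^{2/3+\epsilon}$ from the transition points, with $\delta<\epsilon/2$. The delicate point is that \cref{lem:Ofer-A4} and \cref{eq-230726b} were proved for windows of length $\Theta(N)$ starting at height $1$. We will have to redo the coupling argument of part II on a window of length $N^{2/3+\epsilon}$ with initial height $N^{1/3+2\delta}$: first let the upward drift from the area tilt bring the walk below $\varrho N^{1/3}$, which happens within time $N^{2/3+2\delta}$ except on an event of small probability, and then run the good-interval coupling over the remaining $N^{\epsilon-2\delta}$ blocks. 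This yields a \abbr{tv} error that vanishes, which suffices for \cref{eq:endpoints-N-13-top}.
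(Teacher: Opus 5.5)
Your single-curve relaxation is plausible, but the induction from one curve to the next does not give tightness. To relax $\widetilde\phi_s$ from boundary heights of order $N^{1/3+2\delta}$ you need a window of length at least $N^{2/3+\delta}$. On that whole window you must replace the random floor $\widetilde\phi_{s+1}$ by a constant via \cref{lem:Qh-monotonicity}, and that constant is the supremum of $\widetilde\phi_{s+1}$ over the window. Over $L$ blocks of length $N^{2/3}$ this supremum is, even in stationarity, of order $N^{1/3}(\log L)^{2/3}$. So it cannot be $MN^{1/3}$ with $M=O(1)$ once $L\ge N^{\delta}$, and the comparison yields only $\widetilde\phi_r(x)\lesssim N^{1/3}(\log N)^{2/3}$, not tightness at scale $N^{1/3}$.

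Two further problems compound this. Total-variation closeness of a one-point marginal to $\psi$ gives no supremum control, so it cannot feed your induction. (For a two-sided bridge the interior marginal is also not $\psi$.) And \cite[Cor.~5.2]{HKS25} bounds the supremum over the whole interval, which is at least the boundary values, so with $\mathsf R=O(1)$ it cannot perform the relaxation from $N^{1/3+2\delta}$ by itself. Also, the tilt induces a downward drift, not an upward one.

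The missing idea is a multi-scale bootstrap, which is how the paper proceeds. Take windows $J_\ell(m)=[a-\ell\mathsf w_m,a+\ell\mathsf w_m]$ with $\mathsf w_m\asymp m^2N^{2/3}$. The paper shows that, given all curves are at most $(K+1-r)mN^{1/3}$ on $J_{K+1}(m)$, they are at most $(K+1-r)(\log m)^2N^{1/3}$ on the smaller window $J_r(m)$, with probability at least $1-m^{-10}$.

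This is proved curve by curve from $\ell=K$ upward. The floor of curve $\ell$ is already controlled on $J_{\ell+1}\supset J_\ell$ at height $(K-\ell)(\log m)^2N^{1/3}$, so the constant-floor comparison loses only that amount. For the resulting single curve, \cite[Lem.~4.2]{HKS25} supplies points $x'$, $x''$ in the two end-strips $J_{\ell+1}\setminus J_\ell$ where the curve dips below $\varrho N^{1/3}$, with error $e^{-cm^2}$. Then \cite[Cor.~5.2]{HKS25} on $[x',x'']$, now with boundary values $O(N^{1/3})$, gives the supremum bound $(\log m)^2N^{1/3}$.

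One then iterates $m_j=(\log m_{j-1})^2$, starting from $m_0=N^{\epsilon/3}$ (supplied by \cref{prop:N-third-almost-tight}), down to $m\approx c_0/\eta$, using nested windows. The errors are summable, $\sum_j m_j^{-10}\le\eta/2$, which gives \cref{eq:endpoints-N-13-top}. In particular, there is no need to re-run the coupling argument of \cref{lem:Ofer-A4} on shorter windows.
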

\begin{proof} We will show below that for some $c_0 \ge 3$,  any $a \in [a_{r-1}^- +  2 \Delta,a_r^- - \Delta]$, 
$m \in [c_0, N^{\epsilon/3}]$,  $m'=(\log m)^2$ and 
$\mathsf{w}_m := \lceil 2 K(1+\fh^{-2}) m^2 N^{2/3} \rceil $,
\begin{equation}\label{eq:endpoints-N-13}
{\widehat\Q}_\fh^{(-)}\Big(\cE_{J'}(m') \;\Big|\; \cE_{J}(m) \Big) \ge 1 - m^{-10}\,,
\end{equation}
where $J':= J_r (m)$ and $J:=J_{K+1}(m)$ for $J_\ell(m) := [a - \ell  \mathsf{w}_m ,  a + \ell \mathsf{w}_m]$,
while
\begin{equation}\label{def:EJm}
\cE_{J}(m) := \Big\{
\sup_{x \in J} \{ \widetilde\phi_{r}(x) \} \le (K+1-r) m  N^{1/3} \Big\}\,.
\end{equation}
Then,  fixing $\eta \in (0,1)$ we apply \cref{eq:endpoints-N-13}  
at $a=a_{r-1}^-+2 \Delta$ and $a=a_r^--2\Delta$,  for $m_j=(\log m_{j-1})^2$
starting with $m_0=N^{\epsilon/3}$.  Indeed,  $(K+1) \mathsf{w}_{m_0} \le \Delta$ for large $N$,  with 
$J_{K+1}(m_0) \subseteq [-N, a_r^-]$ and $\widehat{\Q}_\fh^{(-)}(\cE_{J}(m_0)) \ge 1 - m_0^{-10}$ 
thanks to \cref{prop:N-third-almost-tight} (at $\delta=\frac{1}{6}\epsilon$).  Further,  taking
$(K+1) (\log c_0)^4 \le c_0^2$ leads to $J_{K+1}(m_{j}) \subset J_r(m_{j-1})$ so we can 
iterate down until $m' \le c_0 / \eta$ with non-empty nested intervals,  ending 
at $\widetilde\phi_r(a) \le K (c_0/\eta) N^{1/3}$,  where by the union bound,  the 
exceptional probability is at most 
\[
\sum_{j \ge 0} m_j^{-10} \le \frac{\eta}{2} \,, 
\]
thereby yielding \cref{eq:endpoints-N-13-top}.

Turning to the proof of \cref{eq:endpoints-N-13},  we fix $m \in [c_0,N^{\epsilon/3}]$, 
setting $\cE_{\ell,J}(m)$ as in \cref{def:EJm} except for replacing there $r$ by $\ell$ and 
show that 
\begin{equation}\label{eq:endpoints-N-13-induction}
{\widehat\Q}_\fh^{(-)}\left((\cE_{\ell,J_{\ell}}(m'))^c \mid \cE_{\ell+1,J_{\ell+1}} (m'),\, \cE_{J_{K+1}}(m) \right) 
\le \frac{1}{K} m^{-10} \,,  \qquad \forall r \le \ell \le K \,.
\end{equation}
This yields \cref{eq:endpoints-N-13} as
% $\widetilde\phi_{K+1}\equiv 0$,  hence 
$\cE_{K+1,[-N,N]}(0)$ always holds and   
\cref{eq:endpoints-N-13} would follow by decomposing $(\cE_{J'}(m'))^c=(\cE_{r,J_r}(m'))^c$
according to the largest $\ell \le K$ for which $(\cE_{\ell,J_\ell}(m'))^c$ occurs.  
Proceeding to show \cref{eq:endpoints-N-13-induction},  
clearly $J_{K+1}(m) \subset [a_{r-1}^-+\Delta, a_r^-]$ for such values of $a$,  $m$ and large enough $N$.
Thus,  restricting $\widehat{\Q}_{\fh}^{(-)}$  
to any interval $J \subset J_{K+1}(m)$ with specified boundary values $\underline{u},\underline{v}$ yields the law 
$\P_{K+1-r,J;g,0}^{{\fh} \lambda^{r-1},\lambda;\underline{u},\underline{v}}$
of 
a \abbr{rw} \abbr{gate} 
with centered 
increments
% (of law $\tilde{\fX}+1$ on $\Z$ under $\P_{-\theta_\star}$),  
of finite exponential moments, 
% and variance $\Lambda''(\theta_\star)$,  
where, thanks to $\cE_{J_{K+1}}(m)$,  
 its \abbr{rw} bridges have 
a ceiling of
\[ g \equiv (K+1-r) m N^{1/3} 
\,.\]
The event $(\cE_{\ell,J_\ell}(m'))^c$  is increasing in terms of 
% the curve
$\{ \widetilde{\phi}_{\ell}(x) : x \in J_\ell\}$,  which under the event  
$\cE_{\ell+1,J_{\ell+1}}(m')$ has a (random) floor not exceeding 
$h\equiv (K-\ell) m' N^{1/3}$
(as $J_\ell \subset J_{\ell+1}$).  It thus follows from 
\cref{lem:Qh-monotonicity} that, for $b=\fh \lambda^{\ell-1} >0$,  the \abbr{lhs} 
of \cref{eq:endpoints-N-13-induction} is at most the following probability for a single area-tilted curve $\phi$:
\begin{equation}
\P_{1,J_{\ell+1};\infty,0}^{b,1;g,g} \Big(\sup_{x \in J_\ell}\{  \phi(x) \} > m' N^{1/3}  \Big) \,.\label{eq:single-curve-prob-reduction}
\end{equation}
To bound the latter probability,  
we write $J_{\ell+1}\setminus J_\ell = I' \cup I''$,  where 
\begin{align*} I' =\left[a- (\ell+1) \mathsf{w}_m, a - \ell \mathsf{w}_m\right] \, ,
%\qquad \hbox{and} 
\qquad 
I'' = \left[a + \ell \mathsf{w}_m, a + (\ell+1) \mathsf{w}_m\right]\,,
\end{align*}
and appeal to \cite[Lem.~4.2]{HKS25} to conclude that with high probability there exist 
some $x'\in I'$ and $x''\in I''$ so that $\phi(x') \vee \phi(x'') =O(N^{1/3})$.  

\noindent
The requirements of that lemma are as follows:
\begin{itemize}
\item  The boundary conditions should satisfy $u\vee v \le N/b$; in our application, we have $g \asymp m N^{1/3} \leq K N^{1/3+\epsilon/3}$,  so this holds true with room to spare. 
\item Having interval length of at least $2 (\sqrt{u}\vee\sqrt{v})\sqrt{N/b}$; here $|J_{\ell+1}| = 2(\ell+1)\mathsf{w}_m$,  while $2 \sqrt{g}\sqrt{N/b} \leq 2 \sqrt{K/\fh} \sqrt{m} N^{2/3}$, so this holds true by our choice of $\mathsf{w}_m$.
%(we could even put $\sqrt m$ instead of $m^2$ there).
\item The length of the sub-interval where we apply the lemma should be of order $|J_{\ell+1}|$; here 
$\frac{|I'|}{|J_{\ell+1}|} = \frac{|I''|}{|J_{\ell+1}|} = \frac{1}{2(\ell+1)}$,  uniformly bounded away from $0$.
\end{itemize}
Overall, by said lemma (applied once for the sub-interval $I'$ and once for $I''$), we infer that there exist some fixed $c,\varrho>0$ such that, for all sufficiently large $N$,
\begin{equation}\label{eq:E-HKS4.2}
\P_{1,J_{\ell+1};\infty,0}^{b,1;g,g} \Big(\inf_{x \in I'} \{\phi(x)\} \vee \inf_{x \in I''} \{ \phi(x) \} > \varrho N^{1/3}\Big) \leq \exp(-c m^2)
\end{equation}
(as $\mathsf{w}_m = \Omega(m^2 N^{2/3})$ while 
% instead of $N^{2/3} \sqrt{m}$, 
the error probability in \cite[Lemma~4.2]{HKS25} is,  in our notation, 
$\exp(-c_1 \mathsf{w}_m N^{-2/3} + c_2 g^{3/2} /N)$).  With this in mind,  define for $x' \in I'$ 
and $y' \in \{0,\ldots,\varrho N^{1/3}\}$ the disjoint events
\begin{align*}
\cC_{x',y'} &:= \bigg\{ \phi(x') = y',  \phi(x) > \varrho N^{1/3},  \forall x \in I',  x > x' \bigg\}\,,
\end{align*}
and the analogous $\cC_{x'',y''}$ for $I''$ (i.e., $\phi(x'')=y''$, $\phi(x)>\varrho N^{1/3}$ for all $x<x'' $ in $I''$).  In view of \cref{eq:E-HKS4.2} our analysis 
reduces to one of the events $\cC_{x',y'}\cap \cC_{x'',y''}$,  for some $x',x'',y',y''$ where by the Gibbs--Markov
property,  the probability in \cref{eq:single-curve-prob-reduction} is at most
\begin{align}\label{eq:Eyal2}
    e^{-c m^2} + \max_{x',x'',y',y''}\P_{1,[x',x''];\infty,0}^{b,1;y',y''} \Big(\sup_{x \in [x',x'']}\{  \phi(x) \} > m' N^{1/3} \Big) 
\,.
\end{align}
At this point,  we appeal to \cite[Cor.~5.2]{HKS25} on the interval $I = [x',x'']$,  
with a large absolute constant $\mathsf{R}$ as specified there.  Indeed,  this requires 
an interval length $|I|$ exceeding $\sqrt{\mathsf{R}} (N/b)^{2/3}$,  while $|I| \leq \exp(\mathsf{R}^{-1/2} (N/b)^{1/3})$.
In our application,  $J_{\ell+1} \supset I \supset J_\ell$ hence $|I| \in [2\mathsf{w}_m, 2(K+1)\mathsf{w}_m]$ 
and our choice of $\mathsf{w}_m$ guarantees that the preceding requirements from $|I|$ hold for 
all large enough $m$.
The conclusion of the corollary is that,  for some $c>0$ and $C<\infty$,
\[ \P_{1,[x',x''];\infty,0}^{b,1;y',y''} \Big(\sup_{x \in [x',x'']}\{  \phi(x) \} > \mathsf{R}  (2 (\log m)^{3/2}+\varrho)  (N/b)^{1/3} \Big) \leq C m^{-c \mathsf{R}^{3/2}} \,.\]
With $m'=(\log m)^2$,  having $\mathsf{R}$ large enough guarantees a total error probability in \cref{eq:Eyal2}  
of at most $m^{-10}/K$ (uniformly over $x',x'',y',y''$).  This
completes the proof of \cref{eq:endpoints-N-13-induction} and hence also of \cref{eq:endpoints-N-13-top}.
\end{proof}

Moving to our second step,  note that similarly to \cref{rem:Qhat},  given  
% $(\underline{u},\underline{v})$ 
$(\widetilde{\phi}_r,\ldots,\widetilde{\phi}_K)$ at 
$\{a_{r-1}^-+2\Delta,a_r^--2\Delta\}$ for $1 \le r \le K$,
the restriction of $\widehat{\Q}^{(-)}_{\fh}$ to the
union of intervals
$I'_{-r} := [a_{r-1}^-+2\Delta,a_r^- - 2 \Delta]$
%  \subset I_{-r}$ 
forms an independent collection of ordered,  non-negative 
curves $\{\widetilde{\phi}_k(t),  r \le k \le K,  t \in I'_{-r} \}$
from such a \abbr{rw} \abbr{gate} on $I'_{-r}$,  with the 
specified boundary values at $\partial I'_{-r}$.  
%Having from \cref{lem:N-third-tight} the tightness of such rescaled by $N^{1/3}$ boundary values,  
It thus suffices to fix $1 \le r \le K$ and establish the claimed weak convergence 
(subject to shifts) to $\underline{X}^{(r)}$ of \cref{thm:1}, \cref{part:b},  
for $\underline{\psi}^{(-r)}_N$ based on  such \abbr{rw} \abbr{gate}
on $I'_{-r}$ with specified boundary values for which the event on the \abbr{lhs} of 
\cref{eq:endpoints-N-13-top} holds.   This follows upon applying
\cite[Thm.~1.2]{Serio23} in the setting of \cite[Rem.~1.4]{Serio23} with the following modifications.
% Serio Theorem 1.2 with $\lambda_N=N^{-1}$, $H_{\lambda_N}=N^{1/3}$, 
% $h_{\lambda_N}=N^{-1/3}$, $a_N=N^{1/3}$
While \cite{Serio23} has the lines restricted to the 
open Weyl chamber, namely, with strict inequalities in \cref{eq:1d-sos-domain-many-phi},
the same proof applies in our setting (of the closed Weyl chamber).
Also, while \cite[Thm.~1.2]{Serio23} is stated with given boundary 
values at times $T_\pm := \pm N$ and for the weak convergence of the law restricted to
the symmetric intervals $[-T N^{2/3}, T N^{2/3}]$, in view of 
\cite[Thm.~1.7]{Serio23}, the same applies for any choice of $T_\pm$ and
intervals of length $2T N^{2/3}$, whose distances from $T_\pm$, rescaled 
by $N^{2/3}$, still grow to infinity (as done in our 
selection of $t_N \in I_N^{(-r)}$).
%where $\mu_{K-|r|+1}^{\mathfrak{o}}$ denotes the law of the $\lambda$-tilted line ensemble with $(K-|r|+1)$ curves and parameter
%$\fa = \frac{\theta_\star}{\alpha_r}\sqrt{\Lambda''(\theta_\star)}
This completes the proof of \cref{thm:1}, \cref{part:b}.

\bibliographystyle{abbrv}
\bibliography{sos_wedge}

\end{document}